\numberwithin{equation}{section}
\newtheorem{theoremA}{Theorem}
\newtheorem{thm}{Theorem}[section]
\newtheorem{prop}[thm]{Proposition}
\newtheorem{lem}[thm]{Lemma}
\newtheorem{cor}[thm]{Corollary}
\newtheorem{rem}[thm]{Remark}
\newtheorem{example}[thm]{Example}
\newtheorem{dfn}[thm]{Definition}
\newcommand{\bC}{{\mathbb C}}
\newcommand{\bP}{{\mathbb P}}
\newcommand{\bQ}{{\mathbb Q}}
\newcommand{\bZ}{{\mathbb Z}}
\newcommand{\bN}{{\mathbb N}}
\newcommand{\eF}{{\EuScript{AF}}}
\newcommand{\msl}{\mathfrak{sl}}
\newcommand{\fh}{{\mathfrak h}}
\newcommand{\fb}{{\mathfrak b}}
\newcommand{\mb}{\mathbf}
\newcommand{\mr}{\mathrm}
\newcommand{\bdim}{\mb{dim}}
\pgfplotsset{compat=1.18} 
\begin{document}


\title[Laumon parahoric local models via quiver Grassmannians]
{Laumon parahoric local models via quiver Grassmannians}

\author{Evgeny Feigin}
\address{E. Feigin:\newline
School of Mathematical Sciences, Tel Aviv University, Tel Aviv
69978, Israel
}
\email{evgfeig@gmail.com}
\author{Martina Lanini}
\address{M. Lanini:\newline Dipartimento di Matematica\\ Universit\`a di Roma ``Tor Vergata'',  Via della Ricerca Scientifica 1, I-00133 Rome, Italy}
\email{lanini@mat.uniroma2.it}
\author{Alexander P\"utz}
\address{A. P\"utz:\newline
Institute of Mathematics\\
University Paderborn\\
Warburger Str. 100\\
33098 Paderborn\\
Germany}
\email{alexander.puetz@math.uni-paderborn.de}

\keywords{Quiver Grassmannians, local models of Shimura varieties, affine flag varieties}

\begin{abstract}
Local models of Shimura varieties in type A can be realized inside  products of Grassmannians via certain linear algebraic conditions. 
Laumon suggested a generalization which can be identified with a family over a line whose general fibers 
are quiver Grassmannians for the loop quiver and the special fiber is a certain quiver Grassmannian for the cyclic quiver. 
The whole family sits inside the Gaitsgory central degeneration of the affine Grassmannians. We study the properties of the special fibers of the (complex) Laumon local models for arbitrary parahoric subgroups in type A using the machinery of quiver representations.
We describe the irreducible components and the natural strata with respect to the group action for the quiver Grassmannians in question. 
We also construct a  cellular decomposition and provide an explicit description for the corresponding poset of cells.
Finally, we study the properties of the desingularizations of the irreducible components and show that
the desingularization construction is compatible with the natural projections between the parahoric subgroups.
\end{abstract}

\maketitle

\section{Introduction}
Local models of Shimura varieties play an important role in various problems of arithmetics and number theory (see \cite{PZ13,PRS13,P18,Zhu19, HR23}). 
Various methods for their study were developed and various links with other branches of mathematics were discovered. 
In particular, affine Grassmannians and affine flag varieties naturally show up in the theory (see \cite{Ga01,Zhu17,HR20}). Also, certain local models can be described via quiver Grassmannians  (see \cite{G01,G03,HZ23-1,HZ23-2,PZ22}). 
In this paper we are studying the special fibers of the Laumon local models \cite{HN02,Zho19} over the field of complex numbers (for arbitrary parahoric subgroups in type $A$)  via the theory of quiver Grassmannians. Let us describe our results.

Quiver Grassmannians are natural generalizations of the classical Grassmann varieties. To define a quiver Grassmannian ${\rm Gr}_d(M)$ one fixes a quiver $Q$
with set of vertices $Q_0$, a $Q$-module $M=(M_i)_{i\in Q_0}$ and a dimension
vector $d\in\bZ_{\ge 0}^{Q_0}$ (see e.g. \cite{CaRe2008,CI20,CEFR2018,Re13,Schiffler2014,Scho92}). If one is able to realize a projective algebraic variety as a quiver Grassmannian for a simple quiver, then one can exploit the machinery of quiver representations to study its various properties, like stratifications, cellular decompositions, group actions, degenerations, irreducible components, etc. 
(see \cite{CFR12,CFFFR17,CFFFR20,CL15,Pue2019,Pue2020} for some examples). 

In this paper we mostly deal with the cyclic quiver; for a positive integer $r$  we denote by $\Delta_r$ the cyclic quiver on $r$ vertices. Let us fix two positive integers $n$ and $\omega$ and a subset $S\subset [n]= \{1,\dots,n\}$ with $|S|=r$.
To this data we attach a representation $U_{n\omega,S}$  of the quiver $\Delta_r$ defined as follows.  Let $A_\infty$ be an infinite 
equioriented type $A$ quiver with the set of vertices labeled by $\bZ$. We consider the  $A_\infty$ module $M(a)$ of dimension $1$ at vertices from $a+1$ to $a+n\omega$ and zero otherwise with obvious maps between the spaces. 
The $\mod n$ projection $A_\infty\to \Delta_r$, where the vertices of $\Delta_r$ are labeled by the elements of the set $S$, induces a map between the $A_\infty$ modules and the $\Delta_r$ modules.
We define $U_{n\omega,S}$ as the image of the direct sum  $\bigoplus_{a=1}^n M(a)$. The paper is devoted to the study of the quiver Grassmannians 
\[
X_S(k,n,\omega)={\rm Gr}_{k\omega,\dots,k\omega}(U_{n\omega,S})
\]
for $1\le k <n$.
Special cases of these quiver Grassmannians were studied in \cite{FLP21,FLP23}.
The varieties $X_S(k,n,\omega)$ enjoy many nice properties and unexpected connections. In particular,  for $\omega=1$ and $S=\{1,\dots.n\}$ the topology and combinatorics of the quiver Grassmannians in question  are closely related to that of the totally nonnegative Grassmannians \cite{KLS13,Lam16,Lus98a, Pos06,Rie99,W05}. 
For general $S$ and $\omega$ the varieties $X_S(k,n,\omega)$ are isomorphic to the special fibers of the 
parahoric Laumon local models \cite{HN02,Zho19}. Using  representation theory of quivers and the lattice realization of the affine flag varieties one sees that
each quiver Grassmannian as above is isomorphic to a union of Schubert varieties inside the affine flags and that it is naturally included into a family of quiver Grassmannians with the general fiber isomorphic to 
a Schubert variety inside affine Grassmannians. We note also that the general fiber is identified with certain quiver Grassmannian for the loop quiver.

Now we formulate the main properties of $X_S(k,n,\omega)$. Here is the first theorem.

\begin{theoremA} (Theorem \ref{thm:geometric-properties})
The variety  $X_S(k,n,\omega)$ is equidimensional of dimension $\omega k(n-k)$. The irreducible components are parameterized by $I \in \binom{[n]}{k}$ such that $i \in I\setminus S$ implies $i+1 \in I$; the irreducible components are normal, Cohen-Macaulay and have rational singularities.
The variety $X_S(k,n,\omega)$ is equipped with a $\mathbb{C}^*$-action and the corresponding  Bialynicki-Birula-decomposition is cellular.
\end{theoremA}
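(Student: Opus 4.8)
The plan is to play off two descriptions of $X_S(k,n,\omega)$ against each other: the quiver-representation-theoretic one, which produces the combinatorial parameterization of the irreducible components together with the $\mathbb{C}^*$-action and the cellular Bialynicki--Birula decomposition, and the realization sketched in the introduction of $X_S(k,n,\omega)$ as a union of Schubert varieties inside a partial affine flag variety, from which we read off normality, Cohen--Macaulayness and rationality of the singularities of each irreducible component. Special cases already treated in \cite{FLP21,FLP23} will serve as a template for the first part.

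First I would set up the torus action. The coefficient quiver of the $\Delta_r$-module $U_{n\omega,S}$ --- obtained by folding the strings $M(1),\dots,M(n)$ along the projection $A_\infty\to\Delta_r$ --- carries a natural grading, and rescaling basis vectors according to it defines an action of a torus $T$, hence of a generic one-parameter subgroup $\mathbb{C}^*\subseteq T$, on the ambient product $\prod_{i\in S}\mathrm{Gr}(k\omega,(U_{n\omega,S})_i)$ that preserves the closed subvariety $X_S(k,n,\omega)$ of subrepresentations. The $T$-fixed points are exactly the coordinate subrepresentations $L_\beta\subseteq U_{n\omega,S}$ indexed by the successor-closed subsets $\beta$ of the coefficient quiver of total dimension vector $(k\omega,\dots,k\omega)$; in particular the fixed locus is finite, and a generic $\mathbb{C}^*$ has the same fixed points. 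Since $X_S(k,n,\omega)$ is projective, Bialynicki--Birula yields a decomposition into locally closed attracting sets $C_\beta=\{x:\lim_{t\to0}t\cdot x=L_\beta\}$. To see that this decomposition is cellular, i.e. each $C_\beta$ is an affine space, I would run the usual local argument for quiver Grassmannians of string modules (as in \cite{CFR12,Pue2019,Pue2020}): near $L_\beta$ the variety embeds $\mathbb{C}^*$-equivariantly and closed into the affine space $\mathrm{Hom}_{\Delta_r}(L_\beta,U_{n\omega,S}/L_\beta)$, on which the action is linear; since $U_{n\omega,S}$ is a direct sum of string modules, the relevant $\mathrm{Hom}$- and $\mathrm{Ext}$-spaces have explicit bases indexed by paths in the coefficient quiver, and one checks that the attracting set inside $X_S(k,n,\omega)$ is cut out by linear equations inside the attracting subspace of this $\mathrm{Hom}$-space, so $C_\beta\cong\mathbb{A}^{d_\beta}$ with $d_\beta$ computable from $\beta$. (Equivalently, one transports the cells through the affine-flag realization and identifies them with Schubert cells.)

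For the irreducible components I would then argue that the closures $\overline{C_\beta}$ of the top-dimensional cells are exactly the irreducible components: each such $\overline{C_\beta}$ is irreducible and closed, and a dimension/openness argument shows it is not contained in any other $\overline{C_{\beta'}}$, while every point of $X_S(k,n,\omega)$ lies in some $C_\beta$. Translating ``successor-closed of dimension $(k\omega,\dots,k\omega)$ and of maximal attracting dimension'' back through the folding into data on $[n]$ should produce precisely the subsets $I\in\binom{[n]}{k}$ with $i\in I\setminus S\Rightarrow i+1\in I$; here one must carefully track how the strings $M(1),\dots,M(n)$ and their identifications under reduction contribute, assign to each such $I$ the fixed point $L_I$, and compute $\dim\overline{C_I}=\omega k(n-k)$ uniformly in $I$, which gives equidimensionality. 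As a cross-check one can also obtain equidimensionality from the family: the total space of the Laumon local model is flat over the line with irreducible general fiber a Schubert variety of dimension $\omega k(n-k)$ in the affine Grassmannian, and flatness forces every component of the special fiber to have the same dimension.

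Finally, for normality, Cohen--Macaulayness and rational singularities I would invoke the affine-flag picture from the introduction: under that isomorphism each $\overline{C_I}$ is a single (parahoric) affine Schubert variety, and in characteristic $0$ affine Schubert varieties are normal, Cohen--Macaulay and have rational singularities by the standard results on affine flag varieties (Faltings, Littelmann, Mathieu; see also the local-model literature cited in the introduction). The main obstacle I expect is bookkeeping rather than conceptual, and it splits in two: (i) verifying uniformly in $S,n,k,\omega$ that the $\mathbb{C}^*$-equivariant local model of $X_S(k,n,\omega)$ at each $L_\beta$ really makes the attracting set an affine space; and (ii) the exact matching between the quiver-theoretic index set (successor-closed collections giving maximal cells) and the stated set of admissible $I$, together with the uniform dimension count $\omega k(n-k)$ --- controlling the overlaps and identifications of strings produced by the folding $A_\infty\to\Delta_r$ is the delicate point there, and is where the specific structure of $U_{n\omega,S}$ must be used.
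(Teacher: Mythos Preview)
Your proposal is correct and hits all the required ingredients, but it is organized around a different backbone than the paper's proof. The paper's central device is the system of surjective projections $\mathrm{pr}_{S\setminus S'}\colon X_S(k,n,\omega)\to X_{S'}(k,n,\omega)$ (Theorem~\ref{thm:natural-projections}): the dimension statement is obtained by sandwiching $X_S$ between $X_{[n]}(k,n,\omega)$ and $X_{\{i\}}(k,n,\omega)$, both already known to have dimension $\omega k(n-k)$; the parametrization of irreducible components is obtained by analysing exactly which top-dimensional cells of $X_{[n]}(k,n,\omega)$ keep their dimension under $\mathrm{pr}_{[n]\setminus S}$ (Proposition~\ref{prop:dim-preserving-projection}), and then using the moment-graph/poset structure to show that every other cell lands in the closure of one of these (the deferred proof of Lemma~\ref{lem:parametrization-of-irreducible-components}). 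Normality, Cohen--Macaulayness and rational singularities are pulled from \cite[Lemma~4.12]{Pue2020}, which is the same affine-Schubert input you propose. Your route---computing $\dim C_\beta$ directly from the coefficient quiver and cross-checking equidimensionality via flatness of the Laumon family---is a legitimate alternative; it is more self-contained but pushes all the work into the ``bookkeeping'' step you flag, namely identifying the maximal cells with the admissible $I$'s and showing no lower-dimensional cell is maximal in the closure order. The paper's projection argument makes that step short, at the cost of first proving surjectivity of the projections (Section~\ref{sec:natural-projections-are-surective}). One small caution: your sentence ``a dimension/openness argument shows it is not contained in any other $\overline{C_{\beta'}}$, while every point of $X_S(k,n,\omega)$ lies in some $C_\beta$'' does not by itself rule out lower-dimensional components; you do need equidimensionality (from flatness or from the uniform computation $\dim C_I=\omega k(n-k)$) before concluding that the top-dimensional cell closures exhaust the components.
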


Normality and Cohen-Macaulayness for a larger family of varieties were also shown in \cite{HR23} by other techniques (see also \cite{H13}).

For each pair $S'\subset S\subset [n]$ there exists a natural projection map ${\rm pr}_{S \setminus S'} : X_{S}(k,n,\omega)\to X_{S'}(k,n,\omega)$. We show that these maps are surjective. We also use the maps  
to define the action of the automorphism group ${\rm Aut}_{\Delta_r}(U_{\omega n,S})$ on $X_{S'}(k,n,\omega)$. We describe the automorphism groups explicitly and prove the following theorem, where 
$\mr{Aut}_{\Delta_n}(U_{\omega n})$ corresponds to the largest case $S=[n]$.

\begin{theoremA} (Theorem \ref{trm:cells-are-strata})
The varieties $X_S(k,n,\omega)$ are acted upon by an algebraic torus $T$ and each Bialynicki-Birula cell $C$ is torus-stable and contains exactly one torus-fixed point.
The $\mr{Aut}_{\Delta_n}(U_{\omega n})$-orbit of a torus fixed point coincides with the cell, while the corresponding stratum (isomorphism class in the quiver Grassmannian) coincides with the $\mr{Aut}_{\Delta_r}(U_{\omega n,S})$-orbit.
\end{theoremA}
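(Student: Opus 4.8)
The plan is to prove the three assertions in turn: first the torus action together with the elementary properties of the Bialynicki--Birula (BB) cells, then the identification of a cell with an orbit of $\mathrm{Aut}_{\Delta_n}(U_{\omega n})$, and finally the identification of a stratum with an orbit of $\mathrm{Aut}_{\Delta_r}(U_{\omega n,S})$.

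\emph{Torus and cells.} From the explicit structure of $U_{\omega n,S}$ recalled above (its indecomposable summands are thin uniserial nilpotent $\Delta_r$-modules), a maximal torus $T$ of $\mathrm{Aut}_{\Delta_r}(U_{\omega n,S})$ — concretely, the torus rescaling a fixed decomposition of $U_{\omega n,S}$ into indecomposable summands — acts on $X_S(k,n,\omega)$ with finitely many fixed points, namely the coordinate subrepresentations (direct sums of submodules of the chosen summands). Since $\binom{[n]}{k}$-many such data have the right dimension vector, the fixed locus is finite. Choosing a cocharacter $\lambda\colon\mathbb{C}^*\to T$ generic enough that $X_S(k,n,\omega)^{\lambda}=X_S(k,n,\omega)^{T}$, the associated BB decomposition is exactly the cellular decomposition of Theorem~\ref{thm:geometric-properties}. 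Because $T$ commutes with $\lambda$ and, by genericity of $\lambda$, fixes every $\lambda$-fixed point, each cell $C$ is $T$-stable; and by construction $C$ is the attracting set of a single fixed point, so it contains exactly one $T$-fixed point. This proves the first assertion.

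\emph{Cells are $\mathrm{Aut}_{\Delta_n}(U_{\omega n})$-orbits.} Set $G:=\mathrm{Aut}_{\Delta_n}(U_{\omega n})$, a connected linear algebraic group (the unit group of the finite-dimensional algebra $\mathrm{End}_{\Delta_n}(U_{\omega n})$), acting on each $X_S(k,n,\omega)$ compatibly with the projections $\mathrm{pr}_{[n]\setminus S}$ through the homomorphism $G\to\mathrm{Aut}_{\Delta_r}(U_{\omega n,S})$ described earlier, whose image contains the maximal torus $T$. Fix a $T$-fixed point $L$ and its cell $C(L)\cong\mathbb{A}^{d(L)}$. For the inclusion $C(L)\subseteq G\cdot L$ I would read off, from the explicit parametrization of $C(L)$ by the cell poset, a subgroup $H\subseteq G$ — a product of one-dimensional root subgroups of the unipotent radical of $G$ on which $\lambda$ acts with positive weight — acting simply transitively on $C(L)$, so that $C(L)=H\cdot L$. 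For the reverse inclusion I would choose $\lambda$ dominant relative to the explicit structure of $G$, meaning that for every $g\in G$ the limit $\lim_{t\to0}\lambda(t)g\lambda(t)^{-1}$ exists and lies in $\mathrm{Stab}_G(L)$; equivalently, the $\lambda$-non-positive part of $\mathrm{Lie}(G)=\mathrm{End}_{\Delta_n}(U_{\omega n})$ already stabilizes $L$, giving $G=\mathrm{Stab}_G(L)\cdot H$. Since $\mathrm{Stab}_G(L)$ is closed and $\lambda$-stable ($L$ being $\lambda$-fixed), it maps $C(L)$ into itself under this dominance assumption, whence $G\cdot L=\mathrm{Stab}_G(L)\cdot H\cdot L=\mathrm{Stab}_G(L)\cdot C(L)\subseteq C(L)$. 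Combining the two inclusions gives $G\cdot L=C(L)$. By the $G$-equivariance and surjectivity of $\mathrm{pr}_{[n]\setminus S}$ it is enough to carry this out for $S=[n]$ and then push forward; as a cross-check, under the identification of $X_{[n]}(k,n,\omega)$ with a union of affine Schubert varieties, $G$ becomes the relevant (pro-)Iwahori quotient and the BB cells become the Schubert cells, which are the Iwahori orbits.

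\emph{Strata are $\mathrm{Aut}_{\Delta_r}(U_{\omega n,S})$-orbits, and the obstacle.} The inclusion ``$\mathrm{Aut}_{\Delta_r}(U_{\omega n,S})$-orbit $\subseteq$ stratum'' is immediate, as an automorphism of $U_{\omega n,S}$ sends a subrepresentation to an isomorphic one. For the converse, given subrepresentations $N,N'\subseteq U_{\omega n,S}$ with $N\cong N'$, I would lift an isomorphism to an endomorphism of $U_{\omega n,S}$ — possible because, for the thin modules occurring here, every homomorphism out of $N$ extends along $N\hookrightarrow U_{\omega n,S}$ (the obstructing $\mathrm{Ext}^1$ vanishes) — and then a socle and dimension-vector count forces the lift to be an automorphism; equivalently one uses the combinatorial parametrization of subrepresentations of $U_{\omega n,S}$, of which the isomorphism type is a coarsening, and shows $\mathrm{Aut}_{\Delta_r}(U_{\omega n,S})$ acts transitively on each fibre. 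For $S=[n]$ the two automorphism groups coincide, so strata and cells agree there, consistently with the first part. The delicate point throughout is the identification of the abstract BB cell $C(L)$ with the orbit $G\cdot L$: one must choose $\lambda$ simultaneously generic (finite fixed locus) and dominant for $G$ (so that $G=\mathrm{Stab}_G(L)\cdot H$ with $H$ attracted to $L$), and one must thread the explicit descriptions of the cells, the cell poset, and the automorphism groups through the argument while keeping everything compatible with the projections $\mathrm{pr}_{S\setminus S'}$ for varying $S$ — this compatibility bookkeeping is where the bulk of the work lies.
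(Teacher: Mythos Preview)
Your argument for ``strata $=$ $\mathrm{Aut}_{\Delta_r}(U_{\omega n,S})$-orbits'' via injectivity is correct and is exactly the paper's approach (which cites \cite[Lemma~2.28]{Pue2019}). The torus discussion is sound in outline, with two caveats: the fixed-point count is not $\binom{[n]}{k}$ but $\lvert{\mathcal Jug}_S(k,n,\omega)\rvert$, and the paper's torus $T=(\mathbb{C}^*)^{n+1}$ is not a subgroup of $\mathrm{Aut}_{\Delta_r}(U_{\omega n,S})$ --- the extra loop-rotation factor $\gamma_0$ does not commute with the quiver maps --- though this does not derail your argument, since your cocharacter does lie in the automorphism group.

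The real gap is in the direct argument for $C(L)=G\cdot L$ with $G=\mathrm{Aut}_{\Delta_n}(U_{\omega n})$. The dominance half $G\cdot L\subseteq C(L)$ is fine, but for $C(L)\subseteq G\cdot L$ you only promise a subgroup $H$ acting simply transitively on $C(L)$ without constructing it or proving transitivity; that is precisely the content to be established. The paper bypasses this computation. It first handles $S=[n]$: there the indecomposable summands $U_i(\omega n)$ of $U_{\omega n}$ are pairwise non-isomorphic, so two coordinate subrepresentations are isomorphic iff equal, and hence each stratum contains exactly one $T$-fixed point. Since strata are already $\mathrm{Aut}_{\Delta_n}(U_{\omega n})$-orbits (injectivity with $r=n$) and each cell also contains a unique fixed point, cells and strata coincide for $S=[n]$. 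For general $S$ it then pushes forward along the surjective, $T$- and $G$-equivariant projection $\mathrm{pr}_{[n]\setminus S}$, which by Proposition~\ref{prop:sujection-on-cells} sends cells onto cells and hence $G$-orbits onto $G$-orbits. You do mention the reduction to $S=[n]$ and a Schubert cross-check, but the decisive observation that makes the $S=[n]$ case close --- pairwise non-isomorphism of the summands forcing the bijection between fixed points and isomorphism classes --- is what your sketch is missing.
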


We provide explicit description of the cells (in particular, via the juggling patterns) and describe the poset of structure.

Finally, we work out explicitly the desingularization construction of \cite{PuRe2022} in our special case. More precisely, for each
irreducible component of $X_S(k,n,\omega)$ we consider the quiver Grassmannian for the extended cyclic quiver which is smooth and birationally surjects onto the component. We denote by $\hat{X}_{S}(k,n,\omega)$ the union of all the desingularized components. We describe the properties of $\hat{X}_{S}(k,n,\omega)$; in particular, we prove the following theorem.  

\begin{theoremA} (Theorem \ref{thm:natural-projections-desing})
For any $S' \subset S\subset [n]$, the natural map $X_{S}(k,n,\omega)\to X_{S'}(k,n,\omega)$ admits a surjective lift $\hat{X}_{S}(k,n,\omega)\to \hat{X}_{S'}(k,n,\omega)$. 
\end{theoremA}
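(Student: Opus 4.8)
The plan is to construct the lift vertex by vertex using the explicit combinatorial description of both the irreducible components and their desingularizations. Recall from Theorem~A (Theorem~\ref{thm:geometric-properties}) that the irreducible components of $X_S(k,n,\omega)$ are indexed by those $I\in\binom{[n]}{k}$ which are ``$S$-admissible'' in the sense that $i\in I\setminus S$ forces $i+1\in I$. The first step is to understand how the projection ${\rm pr}_{S\setminus S'}$ interacts with this indexing: I expect that if $I$ is $S$-admissible then there is a canonically associated $S'$-admissible set $I'$ (obtained by ``saturating'' $I$ with respect to the smaller set $S'$, i.e.\ repeatedly adding $i+1$ whenever $i\in I'\setminus S'$), and that ${\rm pr}_{S\setminus S'}$ maps the component $X_S^I$ onto the component $X_{S'}^{I'}$. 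This should follow from the compatibility of the $\bmod n$ projection of $A_\infty$-modules with the subquiver inclusion $\Delta_{r'}\hookrightarrow\Delta_r$ and the description of points of the quiver Grassmannian as subrepresentations; the surjectivity onto $X_{S'}^{I'}$ is the content of the surjectivity of ${\rm pr}_{S\setminus S'}$ restricted to components, which itself reduces to the surjectivity statement for $X_S\to X_{S'}$ recalled before Theorem~B.

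The second step is to recall the desingularization $\hat X_S^I\to X_S^I$ from \cite{PuRe2022}: for each admissible $I$ one forms a quiver Grassmannian for the extended cyclic quiver $\hat\Delta_r$ (the cyclic quiver with an extra arrow, or its universal-cover style thickening used in loc.\ cit.), with a dimension vector and module determined by $I$, and this variety is smooth and birationally surjects onto $X_S^I$. The key point I would verify is that the combinatorial data defining $\hat X_S^I$ transforms under $S\rightsquigarrow S'$ in exactly the same bookkeeping way as $I\rightsquigarrow I'$ does, so that there is a morphism of extended-quiver Grassmannians $\hat X_S^I\to\hat X_{S'}^{I'}$ induced again by the projection of the ambient modules, fitting into a commuting square with the two desingularization maps and ${\rm pr}_{S\setminus S'}$. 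Concretely this means choosing the resolution data functorially: the ``oriented'' or ``successive-closure'' construction of \cite{PuRe2022} that produces $\hat X_S^I$ should be applied to the pushed-forward module, and one checks the two constructions agree.

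The third step is to assemble the component-wise lifts into a global map $\hat X_S(k,n,\omega)\to\hat X_{S'}(k,n,\omega)$. Since $\hat X_S(k,n,\omega)$ is by definition the (disjoint, or glued-along-closed-subsets) union of the $\hat X_S^I$, one must check that the component-wise maps are compatible on overlaps; the cleanest way is to show they are all restrictions of one morphism defined on the whole disjoint union and then descend, or to note that each $\hat X_S^I$ maps to a single $\hat X_{S'}^{I'}$ and the assignment $I\mapsto I'$ is well defined, so no gluing conflict arises. Surjectivity of the lift then follows: $\hat X_{S'}(k,n,\omega)$ is covered by the $\hat X_{S'}^{I'}$, each $I'$ is $S'$-admissible, and one checks every $S'$-admissible $I'$ arises as the saturation of some $S$-admissible $I$ (e.g.\ $I=I'$ itself is already $S$-admissible when $S'\subset S$, since the admissibility condition for the larger set $S$ is weaker), so $\hat X_S^{I'}\to\hat X_{S'}^{I'}$ is already a map between the corresponding components; combined with the birational surjectivity of the desingularizations and the surjectivity of ${\rm pr}_{S\setminus S'}$ on each component established in step one, a diagram chase gives surjectivity of the total lift.

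The main obstacle I anticipate is step two: making the desingularization construction of \cite{PuRe2022} genuinely \emph{functorial} with respect to the module projections, rather than merely producing \emph{some} resolution of each component. The resolution in loc.\ cit.\ typically depends on an auxiliary choice (an ordering of vertices, a filtration, or a choice of ``approximation'' of the module by one without self-extensions), and I must pin down that choice in a way that is preserved by $\bmod n'$ collapsing. Equivalently, one needs the square relating the extended-quiver Grassmannians to commute on the nose, which amounts to checking that ``take the image under $\bigoplus M(a)$, then resolve'' equals ``resolve, then collapse'' — a compatibility that is plausible but requires carefully tracking the dimension vectors and the exceptional arrows of $\hat\Delta_r$ versus $\hat\Delta_{r'}$. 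Once this commuting square is in hand, surjectivity of the lift is essentially formal.
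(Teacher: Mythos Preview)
Your overall strategy matches the paper's: work component by component, set up the commuting square
\[
\begin{array}{ccc}
\hat X_S^I & \longrightarrow & \hat X_{S'}^{I'}\\
\downarrow\pi_I & & \downarrow\pi_{I'}\\
X_S^I & \longrightarrow & X_{S'}^{I'}
\end{array}
\]
and deduce surjectivity upstairs from surjectivity downstairs together with birationality of $\pi_{I'}$. Two points are worth correcting in emphasis. First, your ``saturation'' $I\mapsto I'$ is ill-defined as written (adding elements changes the cardinality); fortunately you do not need it, and your later observation is the right one: since $S'\subset S$ implies $I\setminus S\subseteq I\setminus S'$, every $I'\in\mathcal{I}rr_{S'}(k,n)$ already lies in $\mathcal{I}rr_S(k,n)$, so one may simply take $I=I'$. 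Second, what you flag as the ``main obstacle'' --- functoriality of the resolution with respect to the projection --- is in fact a non-issue in this setting and is dispatched in the paper in one line. The functor $\Lambda:\mr{rep}^{\omega r}_\bC(\Delta_r)\to\mr{rep}_\bC(\hat\Delta_r)$ is completely canonical (no auxiliary ordering or filtration is chosen), and the paper defines the projection $\mr{pr}_{[n]\setminus S}$ directly on $\mr{rep}_\bC(\hat\Delta_n)$ by an explicit formula on vertices and arrows; commutativity of the square is then a matter of unwinding definitions, not of tracking choices.

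Where you should spend the care is in the last step, which you leave as ``a diagram chase.'' The precise argument (and the one the paper gives) is: choose a nonempty open $U\subset X_{S'}^{I'}$ over which $\pi_{I'}$ is an isomorphism; by surjectivity of the bottom arrow and of $\pi_I$, every point of $\pi_{I'}^{-1}(U)$ is hit by the top arrow; since $\pi_{I'}^{-1}(U)$ is dense in the irreducible variety $\hat X_{S'}^{I'}$ and the image of the top arrow is closed (proper map from a projective source), the top arrow is surjective. Without restricting to the birational locus the chase does not close, since $\pi_{I'}$ is not injective in general.
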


Our paper is organized in the following way. 
In Section \ref{QGandLM} we introduce background and basic definitions on quiver Grassmannians, parahoric Laumon local models. We discuss how local models are realized as  special quiver Grassmannians $X_S(k,n,\omega)$; the isomorphism with certain Schubert varieties within an appropriate partial flag variety is explained at the end of this section. 
Section \ref{AGprop} deals with geometric properties of $X_S(k,n,\omega)$, in particular, we prove here Theorems A and B, discussing torus actions, cellular decomposition and stratification. 
In Section \ref{sec:moment-graph+Cohomology} we describe the moment graph for the torus action on $X_S(k,n,\omega)$ and on the closure of any cell, investigating the closure inclusion relation of the cell poset. We also provide a formula for the Poincar\'e polynomial of $X_S(k,n,\omega)$. 
Finally, Section \ref{sec:desingularization} is about  the desingularization construction.

\section{Quiver Grassmannians and local models}\label{QGandLM}

\subsection{Basics on Quiver Representations and Quiver Grassmannians}
In this section we recall fundamental definitions from the representation theory of quivers. For more details see \cite{Schiffler2014,Cerulli2011}. A quiver $Q$ is a tuple $(Q_0,Q_1,s,t)$ where $Q_0$ is the set of vertices, $Q_1$ is the set of arrows between the vertices and $s,t : Q_1 \to Q_0$ map each arrow $a$ to its source $s_a$ and target $t_a$, respectively. 
A $Q$-representation $M$ consists of a tuple $(M^{(i)})_{i \in Q_0}$ of $\bC$-vector spaces over the vertices, and a tuple $(M_a )_{a \in Q_1}$ of linear maps 
\[ M_a : M^{(s_a)} \to M^{(t_a)}.\]

A morphism $\psi$ of $Q$-representations $M$ and $N$, or $Q$-morphism,  is a collection of linear maps $\psi_i : M^{(i)} \to N^{(i)}$ such that $\psi_{t_a} \circ M_{a} = N_a \circ \psi_{s_a}$ holds for all $a \in Q_1$.   By $\mathrm{Hom}_Q(M,N)$ we denote the set of all $Q$-morphisms from $M$ to $N$. We write $\mathrm{rep}_\bC(Q)$ for the category of  finite dimensional complex $Q$-representations. 

A subrepresentation $N \subseteq M$ is given by a tuple of vector subspaces $N^{(i)} \subset M^{(i)}$, such that $M_a(N^{(s_a)}) \subseteq N^{(t_a)}$  for all arrows $a\in Q_1$. The dimension vector $\textbf{dim} \, M \in \bZ^{Q_0}$ of $M \in \mathrm{rep}_\bC(Q)$ is the vector whose $i$-th entry is $ \dim_\bC  M^{(i)}$. 
\begin{dfn}
For $\mb{e} \in \bZ^{Q_0}$ and $M \in \mathrm{rep}_\bC(Q)$, the quiver Grassmannian $\mr{Gr}_{\mb{e}}(M)$ is the variety of all $\mb{e}$-dimensional subrepresentations of $M$.
\end{dfn}
For a point $U \in {\rm Gr}_{\bf e}(M)$ the isomorphism class 
$\mathcal{S}_U=\{V \in {\rm Gr}_{\bf e}(M):\ V\simeq U\}$ in the quiver Grassmannian is called stratum and is irreducible (cf. \cite[Lemma~2.4]{CFR12}). The automorphism group $\mr{Aut}_Q(M) \subset \mathrm{Hom}_Q(M,M)$ acts on $\mr{Gr}_{\mb{e}}(M)$ as
\[ A.\big(U^{(i)}\big)_{i \in Q_0} := \Big( A_i\big(U^{(i)}\big) \Big)_{i \in Q_0} \quad \mr{for} \  A \in \mr{Aut}_Q(M) \ \mr{and} \ U \in {\rm Gr}_{\bf e}(M).\]

\subsection{The main objects}
By $\Delta_n$ we denote the equioriented cyclic quiver on the vertex set $\bZ_n := \bZ/n\bZ$ with arrows $a: i \to i+1$  for all $i \in \bZ_n$. For $M \in \mr{rep}_\bC(\Delta_n)$ we write $M_i$ instead of $M_a$ for the map along the arrow $a: i \to i+1$. Now we introduce one particular class of $\Delta_n$-representations which is required for the definition of the main objects.

For any $m \in \bN$ and any subset $S \subseteq [n]$ of cardinality $r$, we define the $\Delta_r$-representation $U_{m,S}$ as follows: Take the vector spaces $M^{(i)}:=\bC^m$ for all $i \in \bZ_r$ and let $B^{(i)}:=\{v_j^{(i)} \, : \, j \in [m] \}$ be the standard basis of the $i$-th copy of $\bC^{m}$. Each map $M_i$ (evaluated on the standard basis) is described by an $m \times m$ matrix with entries 
\begin{equation}\label{eq:linear-maps}
    (M_i)_{k,l} := \delta_{k,l+s_{i+1}-s_i} = \begin{cases} 1 & {\rm if} \ k  = l+s_{i+1}-s_i,\\ 0 & {\rm otherwise}\end{cases}
\end{equation} 
where $S=\{s_i\mid i\in [r]\}$, with  $1\leq s_1< s_2<\dots < s_r\leq n$ and, by convention, $s_{r+1}:= s_1+n$.

\begin{dfn}
For $\omega \in \bN$, $k \in [n]$ and $S \subset [n]$ with $\#S=r$
we define the quiver Grassmannian
\[
X_S(k,n,\omega) := {\rm Gr}_{(k\omega,\dots,k\omega)}(U_{\omega n,S}).
\]
\end{dfn}
\begin{rem}
For $\omega=1$ and $\#S=1$ we obtain $X_S(k,n,\omega) = {\rm Gr}_k(\bC^n)$ whereas for $\omega>1$ and $\#S=1$ the variety $X_S(k,n,\omega)$ is a quiver Grassmannian for the representation of the loop quiver $\Delta_1$ with vector space $\bC^{\omega n}$ and non-trivial map described by the $\omega n \times \omega n$ matrix with entries $M_{k,l} = \delta_{k,l+n}$.
\end{rem}

\subsection{Degenerations}
The varieties $X_S(k,n,\omega)$ can be understood as (special fibers of) the parahoric Laumon local models (see \cite{HN02,Zho19}) over $\mathbb{C}$. Let us recall the construction.

We fix an $n$-dimensional vector space $\bC^n$, an auxiliary variable $t$ and consider the tensor product
$T_{n,\omega}=\bC^n\otimes\bC[t]/(t^\omega)$. In particular, $T_{n,\omega}$ admits a nilpotent action
of the operator of multiplication by $t$.
Next, let $F_\varepsilon$, $\varepsilon\in\mathbb{A}^1$, be a family of operators $F_\varepsilon$ on $T_{n,\omega}$ induced  by the following $n\times n$ matrix 
\begin{equation}\label{Feps}
\begin{pmatrix}
0 & 0 & \dots & 0 & 0 & t+\varepsilon \\
1 & 0 &  \dots & 0 & 0 &  0\\
0 & 1 & \dots  &  0 & 0 & 0\\
\vdots & \vdots & \ddots & \vdots & \vdots & \vdots\\
0 & 0 &   \dots & 1 & 0 & 0\\
0 & 0 &   \dots & 0 & 1 &  0
\end{pmatrix}.
\end{equation}
More precisely, $F_\varepsilon$ commutes with the multiplication by $t$ and the matrix
\eqref{Feps} represents the action of $F_\varepsilon$ on $\bC^n\simeq \bC^n\otimes 1\subset T_{n,\omega}$ 
in certain basis $b_i$, $i\in [n]$. 
Clearly,
\begin{equation}\label{F^n}
F_\varepsilon^n = (t+\varepsilon){\rm Id}. 
\end{equation}

Now let us fix a number $\overline k$, with $1\le \overline k< n\omega$ and a subset $S\subset [n]$,
$S=\{s_1<\dots <s_r\}$.
\begin{dfn}
The parahoric Laumon local model $M_{\overline k,n,\omega}(\varepsilon)$ consists of collections 
$(L_i)_{i\in S}$ of $\overline k$-dimensional subspaces of $T_{n,\omega}$ such that 
$F_{\varepsilon}^{s_{i+1}-s_i} L_{s_i}\subset L_{s_{i+1}}$ for all $i=1,\dots,r$
(here $s_{r+1}=s_1$).
\end{dfn}
\begin{lem}
One has $M_{k\omega,n,\omega}(0)\simeq X_S(k,n,\omega)$.    
\end{lem}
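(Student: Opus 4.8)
The plan is to exhibit an explicit linear isomorphism between the ambient space $T_{n,\omega}$ of the Laumon local model and the underlying vector space $\bC^{\omega n}$ of the $\Delta_r$-representation $U_{\omega n,S}$ which intertwines the relevant powers of $F_0$ with the structure maps $M_i$, and then to observe that the two incidence conditions coincide under the induced isomorphism of the ambient products of Grassmannians. First I would reorder the basis $\{b_i\otimes t^j:\ i\in[n],\ 0\le j\le\omega-1\}$ of $T_{n,\omega}$ into a single string by setting $e_m:=b_{((m-1)\bmod n)+1}\otimes t^{\lfloor (m-1)/n\rfloor}$ for $m\in[\omega n]$. Using \eqref{Feps} and the fact that $F_0$ commutes with multiplication by $t$, a direct computation gives $F_0(e_m)=e_{m+1}$ for $m<\omega n$ and $F_0(e_{\omega n})=0$; thus $F_0$ is the regular nilpotent operator on $T_{n,\omega}\cong\bC^{\omega n}$, i.e. a single Jordan block of size $\omega n$ in the basis $(e_m)$, which is consistent with \eqref{F^n} at $\varepsilon=0$. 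In particular, for every $d\ge 0$ one has $F_0^d(e_m)=e_{m+d}$ if $m+d\le\omega n$ and $F_0^d(e_m)=0$ otherwise.

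Next, label the vertices of $\Delta_r$ by $S$ so that the copy of $\bC^{\omega n}$ indexed by $i\in[r]$ in $U_{\omega n,S}$ corresponds to $s_i$, and let $\phi\colon\bC^{\omega n}\xrightarrow{\sim}T_{n,\omega}$ be the linear isomorphism $v_m^{(i)}\mapsto e_m$ on each copy. Comparing the displayed rule for $F_0^d$ with \eqref{eq:linear-maps}, one sees that $\phi\circ M_i=F_0^{s_{i+1}-s_i}\circ\phi$ for all $i\in[r]$; the case $i=r$, where $s_{r+1}=s_1+n$, is consistent precisely because $F_0^n$ is multiplication by $t$, i.e. the shift $v_m^{(1)}\mapsto v_{m+n}^{(1)}$ around the cycle. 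Now send a point $(U^{(i)})_{i}\in X_S(k,n,\omega)$ to the collection $(L_{s_i})_{i\in S}$ with $L_{s_i}:=\phi(U^{(i)})$: each $L_{s_i}$ is $k\omega$-dimensional, and $F_0^{s_{i+1}-s_i}L_{s_i}=\phi(M_i(U^{(i)}))\subseteq\phi(U^{(i+1)})=L_{s_{i+1}}$, so the image lies in $M_{k\omega,n,\omega}(0)$; applying $\phi^{-1}$ componentwise gives the inverse map. Since $\phi$ induces an isomorphism of ambient varieties $\prod_{i\in\bZ_r}{\rm Gr}(k\omega,\bC^{\omega n})\xrightarrow{\sim}\prod_{i\in S}{\rm Gr}(k\omega,T_{n,\omega})$ carrying the one closed subvariety onto the other, we obtain $X_S(k,n,\omega)\simeq M_{k\omega,n,\omega}(0)$.

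The proof is essentially bookkeeping, and the only point requiring care is the alignment of three conventions: the reindexing of the basis of $T_{n,\omega}$ that turns $F_0$ into a plain Jordan block; the identification of the vertices of $\Delta_r$ with $S$ so that the arrow $i\to i+1$ matches the transition $L_{s_i}\to L_{s_{i+1}}$ of the local model; and the verification that the truncation rule $F_0^d(e_m)\in\{e_{m+d},0\}$ reproduces \eqref{eq:linear-maps} exactly, including the wrap-around contributed by the arrow $r\to 1$. Once these identifications are set up there is no further obstacle.
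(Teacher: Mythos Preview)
Your proof is correct and follows essentially the same approach as the paper: both arguments identify the two ambient $\Delta_r$-representations via the single linear isomorphism $b_i\otimes t^j\leftrightarrow v^{(\bullet)}_{i+nj}$ (which is exactly your $\phi^{-1}$ applied to $e_{i+nj}$), and then observe that the incidence conditions match. Your version simply spells out in detail the verification that $F_0$ becomes the regular nilpotent and that the powers $F_0^{s_{i+1}-s_i}$ coincide with the maps $M_i$ of \eqref{eq:linear-maps}, which the paper leaves to the reader.
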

\begin{proof}
By definition, both $M_{k\omega,n,\omega}(0)$ and $X_S(k,n,\omega)$ are quiver Grassmannians of submodules of dimension $(k\omega,\dots,k\omega)$ inside certain representations of  the cyclic quiver on $n$ vertices. Hence we only need to identify these quiver representations. 
The identification is provided by the map
$b_i\otimes t^j\mapsto v^{(\bullet)}_{i+nj}$ (independently of the upper index $(\bullet)$).   
\end{proof}

In order to describe the general fiber we denote by  $\overline T_{n,\omega}$
a representation of the (one vertex) one loop quiver with the underlying vector space being
$T_{n,\omega}$ and the loop acting via the multiplication by $t$.

The following lemma was proved in \cite{HN02}. We briefly recall the statement and the proof for
the reader's convenience.
\begin{lem}
For a nonzero $\varepsilon$ one has
\[
M_{k\omega,n,\omega}(\varepsilon)\simeq {\rm Gr}_{k\omega}(\overline T_{n,\omega}).
\]
\end{lem}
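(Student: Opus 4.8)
The plan is to exhibit an explicit isomorphism between $M_{k\omega,n,\omega}(\varepsilon)$ and ${\rm Gr}_{k\omega}(\overline T_{n,\omega})$ by reducing the collection $(L_i)_{i\in S}$ of subspaces to a single subspace, using that the maps $F_\varepsilon^{s_{i+1}-s_i}$ are \emph{isomorphisms} when $\varepsilon\neq 0$. Indeed, by \eqref{F^n} we have $F_\varepsilon^n=(t+\varepsilon)\,{\rm Id}$, and since $t$ is nilpotent on $T_{n,\omega}$ (it satisfies $t^\omega=0$), the operator $t+\varepsilon$ is invertible on $T_{n,\omega}$ for $\varepsilon\neq 0$ (its inverse is $\varepsilon^{-1}\sum_{j=0}^{\omega-1}(-t/\varepsilon)^j$). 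Hence $F_\varepsilon$ itself is invertible, and so is every power $F_\varepsilon^{m}$. First I would record this observation precisely, as it is the engine of the whole argument.

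Next I would use invertibility to eliminate all but one of the subspaces. Fix the index $s_1\in S$ and send a collection $(L_i)_{i\in S}\in M_{k\omega,n,\omega}(\varepsilon)$ to the single subspace $L:=L_{s_1}\subseteq T_{n,\omega}$. This $L$ is $k\omega$-dimensional. The containment conditions $F_\varepsilon^{s_{i+1}-s_i}L_{s_i}\subseteq L_{s_{i+1}}$, combined with the fact that each $F_\varepsilon^{s_{i+1}-s_i}$ is an isomorphism and all the $L_{s_i}$ have the same dimension $k\omega$, force these inclusions to be equalities: $L_{s_{i+1}}=F_\varepsilon^{s_{i+1}-s_i}L_{s_i}$. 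Composing around the cycle and using $s_{r+1}=s_1$ together with \eqref{F^n} gives $F_\varepsilon^{n}L=(t+\varepsilon)L=L$, i.e.\ $L$ is stable under multiplication by $t+\varepsilon$, equivalently (subtracting $\varepsilon\,{\rm Id}$) stable under multiplication by $t$. Thus $L$ is a subrepresentation of $\overline T_{n,\omega}$ of dimension $k\omega$, so the assignment lands in ${\rm Gr}_{k\omega}(\overline T_{n,\omega})$. Conversely, given a $t$-stable $k\omega$-dimensional subspace $L$, define $L_{s_i}:=F_\varepsilon^{s_i-s_1}L$ for $i=1,\dots,r$; these are well-defined (the exponents can be taken in $\bZ$ since $F_\varepsilon$ is invertible, and the value is independent of the representative modulo $n$ because $F_\varepsilon^{n}=(t+\varepsilon){\rm Id}$ preserves $L$), each has dimension $k\omega$, and by construction they satisfy the chain conditions. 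These two maps are mutually inverse, giving a bijection.

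Finally I would note that the bijection is a morphism of varieties in both directions — the forward map is the restriction of a linear projection on the product of Grassmannians, and the inverse is given by applying fixed linear operators $F_\varepsilon^{s_i-s_1}$ — hence an isomorphism of varieties. I do not expect any serious obstacle here; the only point requiring a little care is the well-definedness of the inverse map (independence of the choice of integer lift of $s_i-s_1$ modulo $n$), which is exactly where $F_\varepsilon^{n}=(t+\varepsilon){\rm Id}$ and the $t$-stability of $L$ are used, and the observation that the same-dimension-plus-isomorphism argument upgrades the defining inclusions to equalities. This is the heart of why the parahoric conditions collapse to a single honest Grassmannian condition in the generic fiber.
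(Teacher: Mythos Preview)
Your proposal is correct and follows essentially the same approach as the paper's proof: invertibility of $F_\varepsilon$ forces the chain of inclusions to be equalities, so the collection is determined by $L_{s_1}$, which is $t$-stable by \eqref{F^n}; conversely any $t$-stable subspace yields such a collection. In fact you have filled in more detail than the paper's sketch, including the explicit inverse $L_{s_i}:=F_\varepsilon^{s_i-s_1}L$ and the observation that both directions are morphisms of varieties.
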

\begin{proof}
The first observation is that for  $\varepsilon\not= 0$ the map $F_\varepsilon$ is
invertible and hence a point $(L_i)_{i\in S}\in M_{k\omega,n,\omega}(\varepsilon)$ is completely determined by,
say, $L_{s_1}$. Hence one gets an embedding $M_{k\omega,n,\omega}(\varepsilon)\to {\rm Gr}(k\omega,T_{n,\omega})$.
The second observation is that, thanks to  \eqref{F^n},  each $L_i$ is invariant with respect to the multiplication by $t$. Finally, one shows that any $t$-invariant subspace $L$ shows
up  as $L_{s_1}$ for a point $(L_i)_{i\in S}\in M_{k\omega,n,\omega}(\varepsilon)$.
\end{proof}

\begin{rem}
The varieties ${\rm Gr}_{k\omega}(\overline T_{n,\omega})$ are members of a larger family of quiver Grassmannians for the loop quiver studied in \cite{FFR17}. To be precise, in the notation of \cite[Definition 3.1]{FFR17}:
\[
{\rm Gr}_{k\omega}(\overline T_{n,\omega})\simeq X^{(\omega)}_{k\omega,n}.
\]
\end{rem}

\begin{rem}
The variety ${\rm Gr}_{k\omega}(\overline T_{n,\omega})$ is irreducible of dimension $k\omega(n-k)$. It is isomorphic to a Schubert variety in the affine Grassmannian  for the affine $GL_n$ (see the discussion below). 
In other words, ${\rm Gr}_{k\omega}(\overline T_{n,\omega})$ can be realized as a Schubert variety inside a partial affine flag variety for $SL_n$, corresponding to a maximal parabolic subgroup. 
\end{rem}

\begin{cor}\label{family}
There exists a family over ${\mathbb A}^1$ whose general fiber is  ${\rm Gr}_{k\omega}(\overline T_{n,\omega})$ and the special fiber is $X_S(k,n,\omega)$.    
\end{cor}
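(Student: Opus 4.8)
The plan is to realize the desired family as nothing but the total space of the Laumon incidence conditions, with $\varepsilon$ serving as the base coordinate. Concretely, inside $\mathbb{A}^1\times\prod_{i\in S}{\rm Gr}(k\omega,T_{n,\omega})$ I would consider the locus
\[
\mathcal{M}:=\bigl\{\,(\varepsilon,(L_i)_{i\in S})\ :\ F_\varepsilon^{\,s_{i+1}-s_i}L_{s_i}\subset L_{s_{i+1}}\ \text{for }i=1,\dots,r\,\bigr\}
\]
together with the projection $\pi\colon\mathcal{M}\to\mathbb{A}^1$ onto the first factor. By construction, the fiber of $\pi$ over $\varepsilon\in\mathbb{A}^1$ is precisely the parahoric Laumon local model $M_{k\omega,n,\omega}(\varepsilon)$.

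First I would check that $\mathcal{M}$ is a closed subvariety, so that $\pi$ is a genuine morphism of varieties. This is routine: the entries of the matrix \eqref{Feps} are affine-linear polynomials in $\varepsilon$, hence so are the entries of every power $F_\varepsilon^{m}$ and of every operator $F_\varepsilon^{s_{i+1}-s_i}$ acting on $T_{n,\omega}$; therefore each containment $F_\varepsilon^{s_{i+1}-s_i}L_{s_i}\subset L_{s_{i+1}}$ is a closed condition on $(\varepsilon,(L_i)_{i\in S})$, cut out using the tautological subbundles on the Grassmannian factors. Thus $\pi$ is a family over $\mathbb{A}^1$.

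It then remains only to identify the fibers, and this is exactly what the two preceding lemmas supply: $\pi^{-1}(0)=M_{k\omega,n,\omega}(0)\simeq X_S(k,n,\omega)$, while for every $\varepsilon\neq 0$ one has $\pi^{-1}(\varepsilon)=M_{k\omega,n,\omega}(\varepsilon)\simeq{\rm Gr}_{k\omega}(\overline T_{n,\omega})$. Since $\mathbb{A}^1\setminus\{0\}$ is dense in $\mathbb{A}^1$, the general fiber of $\pi$ is ${\rm Gr}_{k\omega}(\overline T_{n,\omega})$ and the special fiber is $X_S(k,n,\omega)$; in fact every fiber away from the origin is isomorphic to ${\rm Gr}_{k\omega}(\overline T_{n,\omega})$.

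I do not anticipate a real obstacle: the essential work has already been carried out in the two lemmas above, and assembling $\mathcal{M}$ is a standard incidence-variety construction. The only points that need (minor) care are the polynomial dependence of $F_\varepsilon^{s_{i+1}-s_i}$ on $\varepsilon$, which is transparent from \eqref{Feps}, and — if one additionally wants $\pi$ to be flat, which is not required by the statement but does hold, since by Theorem~\ref{thm:geometric-properties} and the remark above all fibers of $\pi$ have the same dimension $\omega k(n-k)$ over the smooth one-dimensional base $\mathbb{A}^1$ — a short appeal to the standard flatness criterion over a smooth curve.
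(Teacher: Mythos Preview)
Your proposal is correct and is exactly the intended argument: the paper states Corollary~\ref{family} without proof because it follows immediately from the two preceding lemmas, and your construction of $\mathcal{M}\subset\mathbb{A}^1\times\prod_{i\in S}{\rm Gr}(k\omega,T_{n,\omega})$ with projection $\pi$ makes this explicit. One small caution on your parenthetical remark: equidimensionality of the fibers alone does not imply flatness over a smooth curve; the relevant criterion is that no irreducible (or associated) component of $\mathcal{M}$ lies in a fiber, which would still need a word of justification --- but as you note, flatness is not part of the statement, so this does not affect the proof.
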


\subsection{Affine Grassmannians and flag varieties}
We start by recalling the lattice realization of affine Grassmannians. Let $\ell\in\mathbb{Z}$, then $\mathcal{AG}r^{(\ell)}$ is the locus of $t$-invariant subspaces $V\subset \bC^N\otimes \bC[t,t^{-1}]$
having the property that there exists an $N\in \mathbb{Z}_{>0}$ such that
\[
\bC^n\otimes t^{N}\bC[t]
 \subset V\subset \bC^n \otimes t^{-N}\bC[t], \hbox{ and } 
 \dim V/\bC^n\otimes t^N\bC[t]=\ell+Nn.
\]
The above ind-variety can be embedded into the projectivization of the space of semi-infinite wedges of charge $\ell$. This  is the infinite dimensional vector space spanned by vectors of the form
\begin{equation}\label{monom}
b_{i_1} t^{j_1} \wedge b_{i_2} t^{j_2} \wedge b_{i_3} t^{j_3}\wedge \dots 
\end{equation}
such that 
\begin{itemize}
\item $j_\bullet\le j_{\bullet +1}$ and if $j_\bullet = j_{\bullet +1}$, then $i_\bullet < i_{\bullet +1}$;
\item the wedge product \eqref{monom} contains a tail of the form 
\[
b_1t^N\wedge b_2t^N\wedge\dots\wedge b_nt^N\wedge b_1t^{N+1}\wedge b_2t^{N+1}\wedge\dots;  
\]
\item the number of factors in \eqref{monom} not contained in the tail is equal to $\ell+Nn$.
\end{itemize}

In the rest of this section we discuss the identification of the varieties ${\rm Gr}_{k\omega}(\overline T_{n,\omega})$ with certain Schubert varieties inside an affine Grassmannian for $GL_n$
(or, in other words, inside the partial affine flag varieties for $SL_n$ corresponding to maximal parabolic subalgebras). Before giving a general
construction, let us provide an example.

Let $n=2$, $k=1$ and $\omega\ge 1$. Let $P_0$ and $P_1$ be the maximal parabolic
subgroups of the affine Kac-Moody Lie group $\widehat{SL}_2$. In particular, the affine
Grassmannian is isomorphic to $\widehat{SL}_2/P_0$ and the affine Grassmannian for
$GL_2$ is the union of $\widehat{SL}_2/P_0$ and $\widehat{SL}_2/P_1$. The affine
Weyl group is isomorphic to the semi-direct product  $\bZ/2\ltimes \bZ$; the torus
fixed points in $\widehat{SL}_2/P_0$ are labeled by the elements $e\times \bZ$ and 
the torus
fixed points in $\widehat{SL}_2/P_1$ are labeled by the elements $\sigma\times \bZ$,
where $\bZ_2=\{e,\sigma\}$. For an affine Weyl group element $w$ let 
$Y_w\subset \widehat{SL}_2/P_0 \sqcup \widehat{SL}_2/P_1$ be the corresponding affine
Schubert variety. 

\begin{rem}
We are only interested in the Schubert varieties invariant under the action of the whole current group
$SL_2(\bC[t])$ (as opposed to the smaller Iwahori subgroup). These Schubert varieties correspond to
the elements inside $\bZ/2\times \bZ_{\ge 0}$. In particular, $\dim Y_{(e,m)}=2m$, $\dim Y_{(\sigma,m)}=2m+1$. 
\end{rem}

Recall that the quiver Grassmannians ${\rm Gr}_{\omega}(\overline T_{2,\omega})$ are naturally acted
upon by the current group $SL_2(\bC[t])$. Then one has the $SL_2(\bC[t])$ equivariant isomorphisms 
${\rm Gr}_{\omega}(\overline T_{2,\omega})\simeq Y_{(e,\omega/2)}$ for even $\omega$ and 
${\rm Gr}_{\omega}(\overline T_{2,\omega})\simeq Y_{(e,(\omega-1)/2)}$ for odd $\omega$.

Now let us formulate the general construction.
Let $\fh\subset\msl_n$ be the Cartan subalgebra. 
Let $\pi_1,\dots,\pi_{n-1}\in\fh^*$ be the set of fundamental weights. We use the standard notation $P$ and $Q$ for the weight and root lattices of $\msl_n$. 

 Let $P_0,\dots,P_{n-1}$ be the maximal parahoric subgroups of $\widehat{SL}_n$. In particular, $P_0$ is the current group $SL_n(\bC[t])$. 
Our goal is to embed  ${\rm Gr}_{k\omega}(\overline T_{n,\omega})$ inside the disjoint union of the (generalized) affine Grassmannians $\widehat{SL}_n/P_i$ \cite{Kum02}. 

The affine Kac-Moody Lie algebra $\widehat{\msl}_n$ is equal to $\msl_n\otimes\bC[t,t^{-1}]\oplus\bC K\oplus\bC d$, where $K$ is central and $d$ is the degree element. The affine Cartan subalgebra is given by 
$\widehat{\fh}=\fh\otimes 1\oplus\bC K\oplus\bC d$. For an element $\Lambda\in \widehat{\fh}^*$ its level is the value $\Lambda(K)$. 

Let $\Lambda_i$, $i=0,\dots,n-1$ be the fundamental (level one) weights for $\widehat{\msl}_n$ (see \cite{Kac85}).  In particular, the restrictions of $\Lambda_i$, $1\le i\le n-1$ to the (finite) Cartan subalgebra $\fh$ is equal to $\pi_i$ and  $\Lambda_0|_{\fh}=0$.  

We denote by $L(\Lambda_i)$ the  integrable irreducible highest weight $\widehat{\msl}_n$ representations of fundamental weights. In particular, these are all integrable irreducible level one  modules. 
Let $W$ and $\widehat{W}$ be the Weyl groups for $\msl_n$ and $\widehat{\msl}_n$. In particular,  
$\widehat{W}=W\ltimes Q$, where $Q$ is the root lattice for $\msl_n$ (here we do not distinguish between the root and coroot lattices). The affine Weyl group acts on the weights, we will be mostly interested in the $\widehat{W}$-shifts of the fundamental weights. 

Recall that each $L(\Lambda_i)$ is a weight module with respect to the affine Cartan subalgebra. In particular, for any $\sigma\in \widehat{W}$ the dimension of the weight 
$\sigma(\Lambda_i)$ vectors inside $L(\Lambda_i)$ is equal to one. We choose a non-zero weight $\sigma(\Lambda_i)$ vector inside 
$L(\Lambda_i)$ and denote it by $v_{\sigma(\Lambda_i)}$.

The following claim is well-known and is very useful (see e.g. \cite{FL06}): 
for any $\mu\in P$ there exists an $i=0,\dots,n-1$ and $\sigma\in \widehat{W}$ such that $\mu=\sigma(\Lambda_i)|_{\fh}$.

Recall that one has the $\widehat{SL}_n$-equivariant embeddings $\widehat{SL}_n/P_i\subset \bP(\Lambda_i)$ (in particular, the affine Grassmannian sits inside the projectivization of the basic representation $L(\Lambda_0)$). Thus, all the lines 
$[v_\mu]$, $\mu\in P$ belong to the disjoint union of the generalized affine Grassmannians. By definition, the set of $[v_\mu]$ coincides with the set of all torus fixed points.

For $\sigma\in \widehat{W}$ let $Y_\sigma\subset \widehat{SL_n}/P_i$ be the Schubert subvariety, i.e. the closure of the Iwahori group orbit. 
We note that if $\sigma(\Lambda_i)|_{\fh}\in P_-$, then
$Y_\sigma$ is stable under the  $SL_n(\bC[t])$ action, i.e. 
$Y_\sigma=\overline{SL_n(\bC[t]).[v_\mu]}$.
We put forward the following proposition. 

\begin{prop}\label{Schubert}
Let $\sigma\in \widehat{W}$ be an element such that $\sigma(\Lambda_i)_{\fh}=-\omega\pi_{n-k}$ for some $i$.
Then ${\rm Gr}_{k\omega}(\overline T_{n,\omega})$ is isomorphic to $Y_\sigma$.     
\end{prop}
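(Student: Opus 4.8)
The plan is to reduce the statement to a concrete lattice computation and then invoke the identification of Schubert varieties with their orbit closures. First I would use the lattice realization of $\mathcal{AG}r$ recalled above: a point of ${\rm Gr}_{k\omega}(\overline T_{n,\omega})$ is a $t$-invariant subspace $V$ of $T_{n,\omega}=\bC^n\otimes\bC[t]/(t^\omega)$ of dimension $k\omega$. Lifting through the surjection $\bC^n\otimes t^{-N}\bC[t]\twoheadrightarrow T_{n,\omega}$ for $N$ large (and adding the standard tail), I would embed ${\rm Gr}_{k\omega}(\overline T_{n,\omega})$ into the affine Grassmannian for $GL_n$, hence into $\bigsqcup_i \widehat{SL}_n/P_i$. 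The key point is to identify the image as a single $SL_n(\bC[t])$-stable subvariety and to pin down which one via its unique torus-fixed point of extremal weight.

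Next I would compute the relevant torus-fixed point. The $t$-invariant coordinate subspace of $T_{n,\omega}$ that is ``as small as possible in the $b_1$-direction and as large as possible in the $b_n$-direction'', i.e. spanned by $b_j\otimes t^a$ for $j>k$ (all $a$) together with $b_j\otimes t^a$ for $j\le k$ only for $a\ge\omega$, is a point of ${\rm Gr}_{k\omega}(\overline T_{n,\omega})$ fixed by the diagonal torus. Translating this coordinate subspace into the semi-infinite wedge picture \eqref{monom} and then into a weight of $\widehat{\msl}_n$, one finds its weight restricts to $-\omega\pi_{n-k}$ on $\fh$: removing $k$ of the $n$ ``slots'' at each of $\omega$ consecutive powers of $t$ shifts the weight by $-\omega$ times the fundamental coweight dual to the removed directions, which is exactly $-\omega\pi_{n-k}$ under the conventions fixed above. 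By the well-known fact recalled before the proposition, there is an $i$ and a $\sigma\in\widehat{W}$ with $\sigma(\Lambda_i)|_{\fh}=-\omega\pi_{n-k}$, and this $\sigma$ is (up to the stabilizer of $\Lambda_i$, which acts trivially on the weight) the one in the statement; since $-\omega\pi_{n-k}\in P_-$, the Schubert variety $Y_\sigma=\overline{SL_n(\bC[t]).[v_{\sigma(\Lambda_i)}]}$ is $SL_n(\bC[t])$-stable.

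It then remains to show the two $SL_n(\bC[t])$-varieties coincide. Both ${\rm Gr}_{k\omega}(\overline T_{n,\omega})$ and $Y_\sigma$ are irreducible (the first by the Remark after Corollary \ref{family}, the second by definition) of the same dimension $k\omega(n-k)$ (for $Y_\sigma$ this is the length of the minimal coset representative, which one computes from $-\omega\pi_{n-k}$; for the quiver Grassmannian it is the cited Remark). Both are closed $SL_n(\bC[t])$-stable subvarieties of the same partial affine flag variety containing the torus-fixed point $[v_{\sigma(\Lambda_i)}]$ in their smooth locus or at least in their closure, and ${\rm Gr}_{k\omega}(\overline T_{n,\omega})\subseteq Y_\sigma$ because the former is the $SL_n(\bC[t])$-orbit closure of that fixed point once one checks the orbit lands in $Y_\sigma$; equality then follows from irreducibility and equality of dimensions. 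The main obstacle I expect is the bookkeeping in the second step: carefully matching the three descriptions (coordinate $t$-invariant subspace, semi-infinite wedge monomial, affine weight) and verifying that the resulting restricted weight is precisely $-\omega\pi_{n-k}$ rather than some other $\widehat{W}$-translate — this requires being meticulous about the normalization of $\Lambda_0$, the identification of $\widehat{W}=W\ltimes Q$, and the sign conventions relating charges/tails to levels. Everything after that is a soft argument with irreducibility and dimension count.
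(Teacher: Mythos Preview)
Your overall strategy matches the paper's: embed ${\rm Gr}_{k\omega}(\overline T_{n,\omega})$ into the lattice model of the affine Grassmannian, locate a distinguished torus-fixed point, and identify the image with $Y_\sigma$ via the weight of that point. The paper makes the embedding completely explicit --- $U\mapsto L_U := t^{s-\omega}\overline U \oplus \bC^n\otimes t^s\bC[t]$ with $k\omega=ns+\ell$ --- and then observes directly that the image is $\overline{SL_n(\bC[t]).p}$ for $p$ the image of $U={\rm span}\{b_1,\dots,b_k\}\otimes\bC[t]/(t^\omega)$; it computes the $\fh$-weight of $p$ as $\omega\pi_k$ and then passes to the lowest weight $-\omega\pi_{n-k}$ in the same finite Weyl orbit to match the hypothesis on $\sigma$.

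There is, however, a concrete error in your second paragraph, and it is exactly the bookkeeping slip you anticipated. The coordinate subspace you describe --- $b_j\otimes t^a$ for $j>k$ and all $a$, together with $b_j\otimes t^a$ for $j\le k$ only when $a\ge\omega$ --- has dimension $(n-k)\omega$, not $k\omega$, so it does not lie in ${\rm Gr}_{k\omega}(\overline T_{n,\omega})$ unless $n=2k$. The $k\omega$-dimensional $t$-invariant coordinate subspace whose $\fh$-weight restricts to $-\omega\pi_{n-k}$ is ${\rm span}\{b_{n-k+1},\dots,b_n\}\otimes\bC[t]/(t^\omega)$; alternatively one may use the paper's point ${\rm span}\{b_1,\dots,b_k\}\otimes\bC[t]/(t^\omega)$, whose weight is $\omega\pi_k$, and note that the two points lie in the same $SL_n$-orbit. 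With this correction your plan goes through. One further simplification: once you know the image of ${\rm Gr}_{k\omega}(\overline T_{n,\omega})$ is already the $SL_n(\bC[t])$-orbit closure of the fixed point, that orbit closure is $Y_\sigma$ by definition, so the separate irreducibility and dimension comparison in your third paragraph becomes unnecessary.
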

\begin{proof}
Let $\ell = (k\omega \mod n)$, $\ell=0,\dots,n-1$. We write $k\omega = ns +\ell$.

 Recall that $T_{n,\omega}$ is isomorphic to a tensor product $\bC^n\otimes \bC[t]/t^\omega$. 
 Let $b_1,\dots,b_n$ be a basis of $\bC^n$. 
 A point  $U\in {\rm Gr}_{k\omega}(\overline T_{n,\omega})$ defines (in an obvious way) a $k\omega$-dimensional subspace $\overline U\subset \bC^n\otimes {\rm span}\{t^i\colon i=0,\dots,\omega-1\}$.
We define a subspace $L_U\subset \bC^n\otimes \bC[t,t^{-1}]$ by
\[
L_U=t^{s-\omega}\overline U \oplus \bC^n\otimes t^s\bC[t].
\]
Since $U$ is $t$-invariant, so is $L_U$. 

By definition, $\dim L_U/\bC^n\otimes t^s\bC[t]=k\omega=sn+\ell$. 
 We conclude that $L_U$ belongs to $\mathcal{A}Gr^{(\ell)}$ and hence one gets an embedding ${\rm Gr}_{k\omega}(\overline T_{n,\omega})\subset \mathcal{A}Gr^{(\ell)}$. Moreover, one sees that ${\rm Gr}_{k\omega}(\overline T_{n,\omega})$ is the closure of the $SL_n(\bC[t])$ orbit of the point 
\[
{\rm span}\{b_1,\dots,b_k\}\otimes t^{s-\omega}\bC[t]\oplus {\rm span}\{b_{k+1},\dots,b_n\}\otimes t^{s}\bC[t].
\]
Recall the embedding  $\mathcal{A}Gr^{(\ell)}$ into the projective space  of the semi-infinite forms of charge $\ell$. Then the weight of the corresponding point in the space of forms is equal to $\omega\pi_k$. Hence in order to identify ${\rm Gr}_{k\omega}(\overline T_{n,\omega})$ with a Schubert variety $Y_\sigma$ we need to find a lowest weight vector in  a representation with highest weight $\omega\pi_k$; this lowest weight 
is equal to $-\omega\pi_{n-k}$.
\end{proof}

\begin{cor}
The varieties ${\rm Gr}_{k\omega}(\overline T_{n,\omega})$ exhaust the (disjoint) union of generalized affine Grassmannians, i.e.
\[
\bigcup_{\substack{\omega\ge 1\\ 1\le k\le n-1}} {\rm Gr}_{k\omega}(\overline T_{n,\omega}) = \bigsqcup_{i=0}^{n-1}
\widehat{SL}_n/P_i.\]
\end{cor}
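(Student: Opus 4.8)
The plan is to deduce this corollary directly from Proposition \ref{Schubert} together with the well-known fact recalled just before it. First I would note that the union $\bigsqcup_{i=0}^{n-1}\widehat{SL}_n/P_i$ is, by the lattice picture, the union of all the $SL_n(\bC[t])$-stable Schubert varieties $Y_\sigma$, since every such Schubert variety is the closure of a single $SL_n(\bC[t])$-orbit and each generalized affine Grassmannian $\widehat{SL}_n/P_i$ is an increasing union of these Schubert varieties; equivalently, every torus fixed point $[v_\mu]$ with $\mu\in P$ dominant-antidominant representative lies in one of them. So it suffices to show that the varieties ${\rm Gr}_{k\omega}(\overline T_{n,\omega})$, as $\omega\ge 1$ and $1\le k\le n-1$ vary, reproduce exactly the $SL_n(\bC[t])$-stable Schubert varieties $Y_\sigma$.

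The key step is the enumeration. By Proposition \ref{Schubert}, ${\rm Gr}_{k\omega}(\overline T_{n,\omega})\cong Y_\sigma$ whenever $\sigma(\Lambda_i)|_{\fh}=-\omega\pi_{n-k}$ for some $i$; and these $Y_\sigma$ are precisely the $SL_n(\bC[t])$-stable ones, whose associated weight is antidominant. Conversely, given an arbitrary $SL_n(\bC[t])$-stable Schubert variety $Y_\sigma$, its weight $\mu=\sigma(\Lambda_i)|_{\fh}$ is an antidominant element of $P$, hence of the form $-\sum_{j=1}^{n-1}c_j\pi_j$ with $c_j\ge 0$. Thus I would need to match such a weight with one of the form $-\omega\pi_{n-k}$. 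Here is the point where a genuine argument is required rather than pure bookkeeping: a priori the weights $-\omega\pi_{n-k}$ are only the single-fundamental-weight multiples, so not every antidominant $\mu$ is literally of that shape. What saves the statement is that we are only asked for equality of the \emph{unions} of varieties, and the relevant torus fixed points (equivalently, the $W$-orbits of weights modulo the root lattice, i.e.\ the relevant cosets in $P/Q$) are exhausted by the weights $-\omega\pi_{n-k}$: indeed $\pi_{n-k}$ ranges over all nonzero cosets of $P/Q$ as $k$ ranges over $1,\dots,n-1$, and scaling by $\omega\ge 1$ together with the $\widehat{W}=W\ltimes Q$ action moves $-\omega\pi_{n-k}$ across every antidominant weight in the corresponding coset. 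Concretely, I would invoke the recalled claim "for any $\mu\in P$ there exist $i$ and $\sigma\in\widehat{W}$ with $\mu=\sigma(\Lambda_i)|_{\fh}$" and combine it with the observation that the $SL_n(\bC[t])$-stable Schubert varieties are indexed by antidominant weights, each of which is $\widehat{W}$-conjugate (through a translation by $Q$) to some $-\omega\pi_{n-k}$.

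So the structure of the proof is: (1) by the lattice/semi-infinite-wedge realization, $\bigsqcup_i \widehat{SL}_n/P_i$ equals the union of all $SL_n(\bC[t])$-stable affine Schubert varieties $Y_\sigma$; (2) by the remark preceding Proposition \ref{Schubert}, $Y_\sigma$ is $SL_n(\bC[t])$-stable iff $\sigma(\Lambda_i)|_{\fh}$ is antidominant; (3) the map $(k,\omega)\mapsto -\omega\pi_{n-k}$ hits, up to $\widehat{W}$-translation, every antidominant weight of $P$ — for the coset in $P/Q$ this is because $\{\pi_1,\dots,\pi_{n-1}\}$ surjects onto $P/Q\setminus\{0\}$, and for the "size" because multiplication by $\omega$ exhausts the antidominant chamber within a fixed direction; (4) therefore, via Proposition \ref{Schubert}, $\{{\rm Gr}_{k\omega}(\overline T_{n,\omega})\}$ and $\{Y_\sigma \ : \ Y_\sigma \text{ is } SL_n(\bC[t])\text{-stable}\}$ are the same collection of subvarieties, and taking unions gives the claim. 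The main obstacle I anticipate is step (3): being careful that every antidominant $\mu\in P$ really does arise, i.e.\ that no $SL_n(\bC[t])$-stable Schubert variety is omitted. One clean way to see this is to run the argument at the level of $SL_n(\bC[t])$-orbits on the lattice model directly: such an orbit is determined by the $t$-invariant subspace it contains, which up to the $GL_n$-action is of the form ${\rm span}\{b_1,\dots,b_k\}\otimes t^{a}\bC[t]\oplus{\rm span}\{b_{k+1},\dots,b_n\}\otimes t^{a+1}\bC[t]$ for some $k$ and shift, and this is exactly the generic point appearing in the proof of Proposition \ref{Schubert} with $\omega=a+1-a'$ suitably chosen — making the surjectivity manifest.
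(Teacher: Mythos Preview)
Your step (4) asserts that the two families
\[
\{{\rm Gr}_{k\omega}(\overline T_{n,\omega})\}_{k,\omega}\quad\text{and}\quad\{Y_\sigma : Y_\sigma \text{ is }SL_n(\bC[t])\text{-stable}\}
\]
are \emph{the same collection} of subvarieties. This is false. The $SL_n(\bC[t])$-stable Schubert varieties are indexed by all antidominant weights $\mu=-\sum_j c_j\pi_j$, whereas Proposition~\ref{Schubert} only produces the ones with $\mu=-\omega\pi_{n-k}$, a single multiple of a fundamental weight. For instance the Schubert variety attached to $-\pi_1-\pi_2$ (for $n\ge 3$) is not literally one of the ${\rm Gr}_{k\omega}(\overline T_{n,\omega})$. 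Your attempted fix via $\widehat W$-conjugacy does not help: at level one the $\widehat W$-orbits on $P$ are just the cosets of $Q$, so saying ``$\mu$ is $\widehat W$-conjugate to $-\omega\pi_{n-k}$'' only says the two fixed points sit in the same $\widehat{SL}_n/P_i$; it gives no containment between the corresponding Schubert varieties. The alternative lattice argument at the end has the same flaw: a general $SL_n(\bC[t])$-orbit of lattices is represented by $\bigoplus_{j=1}^n t^{a_j}\bC[t]\cdot b_j$ with an arbitrary tuple $(a_1,\dots,a_n)$, not merely one with two distinct values.

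The paper closes this gap by a different mechanism: it does \emph{not} try to match the two collections, but uses containment of Schubert varieties. For any $\mu\in P$ one can find $\omega\ge 1$ and $k$ with $-\omega\pi_k\le \mu$ in the dominance order; then $Y_\mu\subset Y_\sigma\cong {\rm Gr}_{k\omega}(\overline T_{n,\omega})$ by Proposition~\ref{Schubert} and the standard closure order on Schubert varieties. Since each $\widehat{SL}_n/P_i$ is the union of its Schubert subvarieties, this yields the inclusion $\bigsqcup_i \widehat{SL}_n/P_i\subset \bigcup_{k,\omega}{\rm Gr}_{k\omega}(\overline T_{n,\omega})$, and the reverse inclusion is already contained in Proposition~\ref{Schubert}. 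So the missing idea is precisely to replace ``equality of index sets'' by ``cofinality under the dominance order''.
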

\begin{proof}
 We note that for any weight $\mu\in P$ there exists an $\omega\ge 1$ and a fundamental weight $\pi_k$ such that 
 $-\omega\pi_k$ is less than $\mu$ (in the dominant order). For such $\omega$ and $k$ the Schubert variety $Y_\mu$ sits inside ${\rm Gr}_{k\omega}(\overline T_{n,\omega})$ (thanks to Proposition \ref{Schubert}). Now our Corollary follows
 from the fact that any partial flag variety is a union of its Schubert subvarieties.  
\end{proof}

Let $S\subset[n]=(\Delta_n)_0$ and let $P_S\subset \widehat{SL}_n$ be the parahoric subgroup defined by the set of simple roots $\alpha_i$, $i\in S$ (here we identify the set of vertices of the cyclic quiver with the vertices of the Dynkin diagram of type $A_{n-1}^{(1)}$). The proof of the following result relies on Theorem \ref{thm:natural-projections}, whose proof will be given in the following section.
\begin{lem}
 For each $S\subset[n]$ the quiver Grassmannian $X_S(k,n,\omega)$ is isomorphic to a union of Schubert subvarieties in the partial flag variety $\widehat{SL}_n/P_S$. The union of all $X_S(k,n,\omega)$ equals to the whole partial flag variety.
 \end{lem}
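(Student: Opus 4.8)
The plan is to prove the lemma in two stages, matching its two assertions, and to exploit the explicit description of $X_S(k,n,\omega)$ as a parahoric Laumon local model together with the case $S=[n]$ already understood via the affine Grassmannian picture. Throughout I identify $[n]=(\Delta_n)_0$ with the vertices of the affine Dynkin diagram of type $A_{n-1}^{(1)}$, so that $P_S\subset\widehat{SL}_n$ is the standard parahoric attached to the simple roots $\alpha_i$, $i\in S$; the partial flag variety $\widehat{SL}_n/P_S$ embeds, as a union of its Iwahori-orbit closures $Y_\sigma$, into $\prod_{i\notin S}\mathbb{P}(\Lambda_i)$ (the product of the fundamental projective representations indexed by the nodes \emph{not} in $S$, those being the ones that survive in $P_S$). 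First I would treat the largest case $S=[n]$. Here $P_S=P_{[n]}$ is a maximal parahoric (only one node is omitted, or rather: one has the single fundamental representation whose node is absent), and the lattice realization already in the excerpt — the embedding of $\mathcal{A}Gr^{(\ell)}$ into the space of semi-infinite forms of charge $\ell$, and Proposition~\ref{Schubert} — shows directly that $X_{[n]}(k,n,\omega)\cong M_{k\omega,n,\omega}(0)$ is identified, point by point via $b_i\otimes t^j\mapsto v^{(\bullet)}_{i+nj}$, with a union of affine Schubert varieties $Y_\sigma$ inside $\widehat{SL}_n/P_{[n]}$: the nilpotence $F_0^{\,s_{i+1}-s_i}L_{s_i}\subset L_{s_{i+1}}$ together with $F_0^n=t\cdot\mathrm{Id}$ says exactly that a point is a chain of $t$-stable lattices of the prescribed dimensions and relative positions, i.e. a point of the affine flag variety lying in a bounded window, and such a bounded locus is a finite union of Schubert cells, hence (being closed) a union of Schubert varieties.

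Next I would pass to general $S\subsetneq[n]$ using the natural projection. By Theorem~\ref{thm:natural-projections} (invoked as permitted) the map ${\rm pr}_{[n]\setminus S}\colon X_{[n]}(k,n,\omega)\to X_S(k,n,\omega)$ is surjective, and on the lattice side it is precisely the $\widehat{SL}_n$-equivariant projection $\widehat{SL}_n/B\twoheadrightarrow\widehat{SL}_n/P_S$ restricted to the relevant bounded locus: forgetting the lattices $L_{s_i}$ for $s_i\notin S$ and keeping only the chain indexed by $S$. Since this projection is $\widehat{SL}_n$-equivariant (in particular Iwahori-equivariant) and proper, the image of a union of Schubert varieties $\bigcup_\sigma Y_\sigma$ is again a union of Schubert varieties: $\overline{\mathrm{pr}(Y_\sigma)}=\overline{\mathrm{pr}(\overline{I\!\cdot\!v_\mu})}=\overline{I\!\cdot\!\mathrm{pr}(v_\mu)}$, which is the Schubert variety in $\widehat{SL}_n/P_S$ attached to the minimal-length coset representative of $\sigma$. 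Combined with the surjectivity of $\mathrm{pr}_{[n]\setminus S}$ this yields $X_S(k,n,\omega)\cong\bigcup_\sigma Y_{\bar\sigma}\subset\widehat{SL}_n/P_S$, proving the first assertion.

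For the second assertion — that $\bigcup_{k,\omega}X_S(k,n,\omega)$ exhausts all of $\widehat{SL}_n/P_S$ — I would argue as in the Corollary following Proposition~\ref{Schubert}: every Schubert variety in $\widehat{SL}_n/P_S$ is $Y_{\bar\sigma}$ for some $\sigma\in\widehat W$ with $\sigma(\Lambda_i)|_{\fh}$ anti-dominant, and for any such weight $\mu\in P_-$ one can choose $\omega\gg0$ and a fundamental weight $\pi_k$ with $-\omega\pi_k\le\mu$ in the dominant order; then $Y_\mu$ (viewed downstairs in $\widehat{SL}_n/P_S$ via the same projection) lies inside the image of $X_{[n]}(k,n,\omega)$, hence inside $X_S(k,n,\omega)$. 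Since any partial flag variety is the union of its Schubert subvarieties, letting $k,\omega$ range gives equality. The main obstacle I anticipate is bookkeeping rather than conceptual: one must check carefully that the map $b_i\otimes t^j\mapsto v^{(\bullet)}_{i+nj}$ intertwines the $\Delta_n$-module structure on $U_{\omega n,[n]}$ (resp. $U_{\omega n,S}$) with the lattice-chain data so that $X_S(k,n,\omega)$ maps onto a \emph{closed}, Iwahori-stable, hence Schubert-decomposable, subset of $\widehat{SL}_n/P_S$ — and that the indexing matches up so that no spurious components appear; the fact that the projections are equivariant and proper, together with Theorem~\ref{thm:natural-projections}, is what makes this go through cleanly.
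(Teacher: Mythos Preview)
Your overall strategy is the same as the paper's: establish the case $S=[n]$ first, then reduce the general case to it via the surjective projection $\mathrm{pr}_{[n]\setminus S}$ of Theorem~\ref{thm:natural-projections}, which on the group-theoretic side is the equivariant projection between partial affine flag varieties and therefore sends unions of Schubert varieties to unions of Schubert varieties. The exhaustion argument is likewise the same idea as in the Corollary following Proposition~\ref{Schubert}. So the architecture is right.

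There are, however, two points of confusion in your write-up of the base case. First, the parahoric bookkeeping is inverted: when $S=[n]$ the subgroup $P_{[n]}$ is the \emph{Iwahori}, not a maximal parahoric, and $\widehat{SL}_n/P_{[n]}$ is the full affine flag variety; correspondingly the Pl\"ucker-type embedding is into $\prod_{i\in S}\mathbb{P}(\Lambda_i)$, not $\prod_{i\notin S}\mathbb{P}(\Lambda_i)$. (Your later description of points of $X_{[n]}(k,n,\omega)$ as lattice \emph{chains} is in fact consistent with the Iwahori picture, so the slip is terminological rather than structural.) Second, the appeal to Proposition~\ref{Schubert} is misplaced: that proposition concerns the loop-quiver Grassmannian ${\rm Gr}_{k\omega}(\overline T_{n,\omega})$, i.e.\ the case $|S|=1$ embedded in a single affine Grassmannian, not the $S=[n]$ case inside the full affine flag variety. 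The lattice-chain argument you actually sketch (each $L_i$ is $t$-stable because $F_0^n=t\cdot\mathrm{Id}$, the resulting locus is closed and Iwahori-stable in a bounded window, hence a finite union of Schubert varieties) is the right mechanism, but it does not come from Proposition~\ref{Schubert}.

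For comparison, the paper does not reprove the $S=[n]$ case at all: it simply cites \cite{FLP23}, where that identification with a union of Schubert varieties in the affine flag variety is carried out, and then invokes Theorem~\ref{thm:natural-projections} exactly as you do. Your attempt to supply a self-contained argument for $S=[n]$ is reasonable in spirit, but would need the corrections above and a careful check of Iwahori-stability (which you flag at the end) to be complete.
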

\begin{proof}
The claim of the lemma is true for $S$ being the whole set of vertices of $\Delta_n$ (see \cite{FLP23}). 
Now the general case is an immediate corollary from Theorem \ref{thm:natural-projections}.    
\end{proof}

\begin{rem}
Due to the lemma above each $X_S(k,n,\omega)$ is isomorphic to a union of affine Schubert varieties inside $\widehat{SL}_n/P_S$.
Hence the limit of the degenerations ${\rm Gr}_{k\omega}(\overline T_{n,\omega})\rightsquigarrow X_S(k,n,\omega)$ when $\omega$ goes to infinity gives the so called  central degeneration of affine Grassmannians into affine flag varieties (see \cite{Ga01,Zho19}).   
\end{rem}

\section{Algebro-geometric properties}\label{AGprop}

\subsection{Natural projections}\label{sec:natural-projections} 

Consider the maps
\[ {\rm pr}_j: \prod_{i \in \bZ_n} {\rm Gr}_{\omega k}(\bC^{\omega n}) \to \prod_{i \in \bZ_n \setminus \{j\} }{\rm Gr}_{\omega k}(\bC^{\omega n})  \quad {\rm for} \ j \in \bZ_n. \]
Let $T \subset [n]$ with $\#T = q$ and define ${\rm pr}_T := {\rm pr}_{t_1} \circ \hdots \circ {\rm pr}_{t_q}$.
\begin{rem}
For $T \subset [n]$ with $\#T = q$ and $r:=n-q$ the map ${\rm pr}_T$ induces the functor $\mr{pr}_T : \mr{rep}_\bC \Delta_n \to \mr{rep}_\bC \Delta_r$ where an object $M \in \mr{rep}_\bC \Delta_n$ is sent to 
\[ \Big( \big(M^{(s_i)}\big)_{i \in \bZ_r}, \big(M_{s_{i+1}-1}\circ \dots \circ M_{s_i}\big)_{i \in \bZ_r} \Big).\]
Here $s_1 < s_2 < \hdots < s_r$ denote the elements of $[n] \setminus T$.
\end{rem}
\begin{thm}\label{thm:natural-projections}
For $\omega \in \bN$, $k \in [n]$ and $S,S' \subset [n]$ with $S' \subset S$, the projections ${\rm pr}_j$ induce the surjective map 
\[{\rm pr}_{S \setminus S'} : X_{S}(k,n,\omega)\to X_{S'}(k,n,\omega). \]
\end{thm}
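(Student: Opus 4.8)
The plan is to analyze the map $\mathrm{pr}_{S\setminus S'}$ at the level of quiver representations and then prove surjectivity by a fiber-wise argument. First I would observe that it suffices to treat the case $\#(S\setminus S') = 1$, say $S\setminus S' = \{j\}$, since a general $\mathrm{pr}_{S\setminus S'}$ is a composition of such one-step projections and a composition of surjections is surjective; one must only check that each intermediate variety is again of the form $X_{S''}(k,n,\omega)$, which follows from the functorial description in the Remark preceding the theorem together with the explicit form \eqref{eq:linear-maps} of the maps in $U_{\omega n, S}$. Concretely, if $S = \{s_i\mid i\in[r]\}$ and $j = s_p \in S$, then $S' = S\setminus\{s_p\}$ and the functor $\mathrm{pr}_{\{j\}}$ sends the $\Delta_r$-representation $U_{\omega n, S}$ to the $\Delta_{r-1}$-representation obtained by composing the two maps $U_{\omega n, S}$ has at the vertex $s_p$; by \eqref{eq:linear-maps} this composite is exactly the structure map of $U_{\omega n, S'}$ at the corresponding arrow, so $\mathrm{pr}_{\{j\}}(U_{\omega n, S}) = U_{\omega n, S'}$ and $\mathrm{pr}_{\{j\}}$ does land in $X_{S'}(k,n,\omega)$.

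Next I would verify well-definedness as a morphism of varieties: forgetting the subspace at vertex $j = s_p$ from a subrepresentation $(U^{(i)})_{i\in S}$ of $U_{\omega n, S}$ produces a tuple $(U^{(i)})_{i\in S'}$, and one checks it is a subrepresentation of $U_{\omega n, S'}$, i.e. that the composite map $M_{s_{p+1}-1}\circ\cdots\circ M_{s_p}\colon M^{(s_{p-1})}\to M^{(s_{p+1})}$ carries $U^{(s_{p-1})}$ into $U^{(s_{p+1})}$. This is automatic because it factors through $U^{(s_p)}$ along the original arrows. Dimension vectors are preserved since every component dimension equals $k\omega$. This gives the morphism $\mathrm{pr}_{\{j\}}\colon X_S(k,n,\omega)\to X_{S'}(k,n,\omega)$.

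For surjectivity, the heart of the matter is: given a subrepresentation $(U^{(i)})_{i\in S'}$ of $U_{\omega n, S'}$, I must produce a $k\omega$-dimensional subspace $U^{(s_p)}\subset M^{(s_p)} = \bC^{\omega n}$ lying "between" $U^{(s_{p-1})}$ and $U^{(s_{p+1})}$ in the sense demanded by the two arrows at $s_p$, namely $M_{?}(U^{(s_{p-1})})\subseteq U^{(s_p)}$ and $M_{?}(U^{(s_p)})\subseteq U^{(s_{p+1})}$, where the relevant maps are the ones from \eqref{eq:linear-maps}. The natural candidate is $U^{(s_p)} := M_{s_p-1}\circ\cdots\circ M_{s_{p-1}}(U^{(s_{p-1})}) + (\text{something to fix the dimension})$, or dually the preimage construction. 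The point I would exploit is that all structure maps of $U_{\omega n, S}$ are, by \eqref{eq:linear-maps}, given by shift-type matrices that are injective or surjective in a controlled way — in fact each $M_i$ has full rank as large as possible — so the image of a $k\omega$-dimensional space under a composite has predictable dimension, and one can always enlarge the image (staying inside the constraining space $(M_{\text{next}})^{-1}(U^{(s_{p+1})})$) or shrink appropriately to hit dimension exactly $k\omega$. The cleanest route is probably to show that the relevant "interval" of subspaces $\{U^{(s_p)} : \mathrm{im}\subseteq U^{(s_p)}\subseteq \ker\text{-preimage}\}$ is nonempty of the right dimension, using that $\dim\bigl((M)^{-1}(U^{(s_{p+1})})\bigr) - \dim\bigl(M(U^{(s_{p-1})})\bigr)\ge 0$ together with a dimension count for the shift matrices; alternatively one transports the problem to the affine-flag-variety picture and uses surjectivity of the projection $\widehat{SL}_n/P_S\to\widehat{SL}_n/P_{S'}$ between partial flag varieties, which is classical, combined with the identification of each $X_S(k,n,\omega)$ with a union of Schubert varieties stable under this projection.

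The main obstacle I anticipate is the surjectivity step: showing that for an \emph{arbitrary} point of $X_{S'}(k,n,\omega)$ one can fill in the missing subspace at vertex $j$ \emph{with the exactly correct dimension} $k\omega$ while respecting both incidence conditions. The two conditions squeeze $U^{(s_p)}$ between a fixed subspace (an image) and a fixed larger subspace (a preimage), and one must check the dimension of this preimage minus the dimension of this image is nonnegative — equivalently that the composite map $M^{(s_{p-1})}\to M^{(s_{p+1})}$ restricted to the quotient $U^{(s_{p+1})}/(\text{image})$ behaves well. I expect this to reduce, via the explicit matrices \eqref{eq:linear-maps} and the periodicity built into $U_{\omega n, S}$, to a combinatorial inequality about the shifts $s_{p+1}-s_p$ that holds because $(U^{(i)})_{i\in S'}$ was already a genuine subrepresentation. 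If that direct argument gets unwieldy, the fallback via the Schubert-variety description and the known surjectivity of partial-flag projections $\widehat{SL}_n/P_S\to\widehat{SL}_n/P_{S'}$ — taking care that a union of Schubert varieties maps onto a union of Schubert varieties — sidesteps the bookkeeping entirely, at the cost of invoking the affine-flag realization.
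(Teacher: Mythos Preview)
Your direct linear-algebra approach is correct and genuinely different from the paper's. Once you reduce to deleting a single vertex $s_p$ and set $A:=\mathcal{T}_{q_1}(U^{(s_{p-1})})$, $B:=\mathcal{T}_{q_2}^{-1}(U^{(s_{p+1})})$ with $q_1=s_p-s_{p-1}$, $q_2=s_{p+1}-s_p$, the subrepresentation condition on $(U^{(i)})_{i\in S'}$ gives $A\subseteq B$, and the existence of a $k\omega$-dimensional $U^{(s_p)}$ in between follows from $\dim A\le k\omega$ (immediate: $A$ is an image of a $k\omega$-dimensional space) and $\dim B\ge k\omega$. The latter is a two-line count: $\mathrm{im}\,\mathcal{T}_{q_2}$ has codimension $q_2$, so $\dim(U^{(s_{p+1})}\cap\mathrm{im}\,\mathcal{T}_{q_2})\ge k\omega-q_2$, whence $\dim B=q_2+\dim(U^{(s_{p+1})}\cap\mathrm{im}\,\mathcal{T}_{q_2})\ge k\omega$. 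So your ``interval'' argument goes through without any special periodicity or combinatorics; you should just be precise that the needed inequalities are $\dim A\le k\omega\le\dim B$, not merely $\dim B\ge\dim A$.

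By contrast, the paper does not argue pointwise at all: it first establishes the Bialynicki--Birula cellular decomposition of both source and target, then proves surjectivity on the level of $T$-fixed points by an explicit combinatorial construction on juggling patterns (Proposition~\ref{prop:surjective-on-fixed-points}), and finally shows that each cell of $X_{S'}(k,n,\omega)$ is the image of a cell of $X_S(k,n,\omega)$ using explicit cell coordinates (Proposition~\ref{prop:sujection-on-cells}). Your route is considerably more elementary and self-contained; the paper's route buys more, since the cell-to-cell statement is reused later (e.g.\ in Theorem~\ref{trm:cells-are-strata} and the dimension-preservation analysis of Proposition~\ref{prop:dim-preserving-projection}).

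One caution: your proposed fallback via the projection $\widehat{SL}_n/P_S\to\widehat{SL}_n/P_{S'}$ is circular within the paper's logical structure, because the identification of $X_S(k,n,\omega)$ with a union of Schubert varieties for general $S$ (the Lemma at the end of Section~\ref{QGandLM}) is itself deduced \emph{from} Theorem~\ref{thm:natural-projections}. So drop that fallback; your primary argument already suffices.
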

For the proof of this statement we have to understand the geometry of the varieties $X_S(k,n,\omega)$; the proof is given in Section~\ref{sec:geometric-properties}.
\subsection{Torus actions}\label{sec:torus-action}
Let $\lambda \in \bC^*$ act on the basis of the vector spaces of $U_{\omega n,S}$ with the weights $\mr{wt}(v_t^{(i)}):=t$ for all $i \in \bZ_r$ and $t \in [\omega n]$. This action extends to $X_S(k,n,\omega)$ by \cite[Lemma~1.1]{Cerulli2011}. 

The above $\bC^*$-action coincides with a cocharacter of the algebraic torus $T:=(\bC^*)^{n+1}$ acting on the basis of the vector spaces of $U_{\omega n,S}$ via 
\[ \gamma.v_t^{(i)} =  \gamma_0^{t-1} \gamma_{i-t+1}v_t^{(i)}  \quad \mr{for} \ i \in \bZ_r \ \mathrm{and} \ \big(\gamma_0,(\gamma_j)_{j\in \bZ_n}\big) \in T.\] 
\begin{rem}
This action extends to $X_S(k,n,\omega)$ by \cite[Lemma~5.12]{LaPu2020} and commutes with the projection maps.
\end{rem}
\begin{lem}\label{lem:fixed-points-coincide}
The fixed point set $X_S(k,n,\omega)^{\bC^*}$ is finite and it coincides with the fixed point set $X_S(k,n,\omega)^T$.
\end{lem}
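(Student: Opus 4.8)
The plan is to identify the $\bC^*$-fixed points explicitly as ``coordinate'' subrepresentations and then observe that this description makes them automatically $T$-fixed. First I would recall that $U_{\omega n,S}$ carries the basis $\{v_t^{(i)}\}$ indexed by $t\in[\omega n]$, $i\in\bZ_r$, and that the given $\bC^*$-action puts pairwise distinct weights on the vectors $v_1^{(i)},\dots,v_{\omega n}^{(i)}$ \emph{within each fixed vertex} $i$; concretely $\mr{wt}(v_t^{(i)})=t$. Since a $\bC^*$-fixed point of a quiver Grassmannian is precisely a subrepresentation $U=(U^{(i)})$ each of whose components $U^{(i)}\subseteq M^{(i)}$ is a $\bC^*$-stable subspace, and a subspace of a vector space with a diagonalizable torus action with distinct weights is stable if and only if it is spanned by a subset of the weight basis, every fixed point is a \emph{coordinate subrepresentation}: $U^{(i)}=\mathrm{span}\{v_t^{(i)}:t\in J_i\}$ for some $J_i\subseteq[\omega n]$ with $|J_i|=k\omega$, subject to the compatibility conditions imposed by the maps $M_i$ of $U_{\omega n,S}$ (i.e.\ $M_i(U^{(i)})\subseteq U^{(i+1)}$). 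I would note the classical fact that there are only finitely many such coordinate subrepresentations (at most $\binom{\omega n}{k\omega}^r$ of them), hence $X_S(k,n,\omega)^{\bC^*}$ is finite.

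Next I would prove the coincidence with $X_S(k,n,\omega)^T$. One inclusion is immediate: the distinguished $\bC^*$ is a cocharacter of $T$ (as stated right before the lemma, $\gamma.v_t^{(i)}=\gamma_0^{t-1}\gamma_{i-t+1}v_t^{(i)}$ restricts to the weight-$t$ action when one specializes appropriately), so $X_S(k,n,\omega)^{T}\subseteq X_S(k,n,\omega)^{\bC^*}$. For the reverse inclusion, take a $\bC^*$-fixed point $U$; by the previous paragraph it is a coordinate subrepresentation, $U^{(i)}=\mathrm{span}\{v_t^{(i)}:t\in J_i\}$. But $T$ acts diagonally on the very same basis $\{v_t^{(i)}\}$ (each $v_t^{(i)}$ is a $T$-weight vector), so any coordinate subspace is automatically $T$-stable; hence $U$ is $T$-fixed. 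Therefore $X_S(k,n,\omega)^{\bC^*}\subseteq X_S(k,n,\omega)^{T}$, and the two fixed-point sets agree.

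\textbf{Main obstacle.} The only genuinely substantive point is the assertion that a $\bC^*$-fixed point really is a coordinate subrepresentation, which rests on the weights $\mr{wt}(v_t^{(i)})=t$ being pairwise distinct on each $M^{(i)}=\bC^{\omega n}$; this is exactly why the auxiliary variable $\omega n$ (rather than $n$) of basis vectors per vertex matters, and it uses only linear algebra over a field where the grading by $\bC^*$-weights refines into one-dimensional pieces. The existence of the $\bC^*$-action on $X_S(k,n,\omega)$ in the first place (so that ``fixed point'' makes sense) and the fact that it extends from the basis are quoted from \cite[Lemma~1.1]{Cerulli2011} and \cite[Lemma~5.12]{LaPu2020}. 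I would also remark, to forestall any worry, that the compatibility conditions $M_i(U^{(i)})\subseteq U^{(i+1)}$ are \emph{automatically} preserved under either torus action because the maps $M_i$ of $U_{\omega n,S}$ are monomial (by \eqref{eq:linear-maps} each $M_i$ sends a basis vector to a basis vector or to $0$), so intersecting with coordinate subspaces causes no trouble; but this is not even needed for the statement of the lemma, only for later identification of the fixed points.
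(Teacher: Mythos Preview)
Your argument is correct, but it proceeds differently from the paper. The paper's proof is more abstract: it cites \cite[Theorem~1]{Cerulli2011} to conclude finiteness of $X_S(k,n,\omega)^{\bC^*}$ directly, then uses only the inclusion $\bC^*\hookrightarrow T$ (via $\gamma_0=\gamma_j=\lambda$) together with the general principle (referenced as \cite[Remark~1.13]{LaPu2020}) that a connected torus acting on a finite set must act trivially, yielding $X^{\bC^*}\subseteq X^T$ without ever identifying the fixed points explicitly.

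Your route is more hands-on: you explicitly characterize the $\bC^*$-fixed points as coordinate subrepresentations using the distinctness of the weights $t\in[\omega n]$ at each vertex, read off finiteness from the crude bound $\binom{\omega n}{k\omega}^r$, and then observe that coordinate subrepresentations are automatically $T$-stable because $T$ acts diagonally on the same basis. This is entirely valid and has the virtue of being self-contained (no black-box citations needed beyond the fact that the actions extend to the quiver Grassmannian). It also essentially pre-proves the content of the paper's next lemma (Lemma~\ref{lem:T-fixed-point-parametrisation}), which gives exactly this coordinate description of the fixed points. The paper's approach, by contrast, is shorter and separates concerns: it establishes the lemma abstractly and defers the explicit description to the next statement. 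Either is fine; yours trades brevity for transparency.
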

\begin{proof}
The $\bC^*$-action constructed above satisfies the assumptions of \cite[Theorem~1]{Cerulli2011}. Hence this statement implies that the cardinality of the $\bC^*$-fixed point set is finite. There is an embedding of $\bC^*$ into $T$ given by $\gamma_0:=\lambda$ and $\gamma_j := \lambda$ for all $j \in \bZ_n$ preserving the $\bC^*$-action. Thus, both fixed point sets coincide since $X_S(k,n,\omega)^T \subseteq X_S(k,n,\omega)^{\bC^*}$ (cf. \cite[Remark~1.13]{LaPu2020}).
\end{proof}
There is a combinatorial description of the $T$-fixed points of $X_S(k,n,\omega)$. Let $\binom{[n]}{k}$ denote the set of all $k$-element subsets of $[n]$. 

\begin{dfn}
For $\omega \in \bN$, $k \in [n]$ and $S \subset [n]$ with $\# S =:r$, let ${\mathcal Jug}_S(k,n,\omega)$ denote the set
\[  \Big\{ (J_i)_{i \in \bZ_r} \in \prod_{i  \in \bZ_r} \binom{[\omega n]}{ k \omega} : 
\tau_{s_{i+1}-s_{i}}(J_{i}\cap \bZ_{\le n\omega-s_{i+1}+s_i}) \subset J_{i+1} \  \mr{for} \ \mr{all} \ i \in \bZ_r
\Big\}\] 
where $1\leq s_1< s_2<\dots < s_r\leq n$ with $s_{r+1}:= s_1+n$, and $\tau_q : \bZ \to \bZ, \ j \mapsto j+q$.
\end{dfn}

\begin{lem}\label{lem:T-fixed-point-parametrisation}
For $\omega \in \bN$, $k \in [n]$ and $S \subset [n]$ with $\# S =:r$, the set $X_S(k,n,\omega)^T$ is in bijection with ${\mathcal Jug}_S(k,n,\omega)$.
\end{lem}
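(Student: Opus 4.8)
The plan is to identify $T$-fixed points with subrepresentations that are coordinate subspaces, and then to translate the subrepresentation condition into the combinatorial condition defining $\mathcal{Jug}_S(k,n,\omega)$. Since the torus $T$ acts on each $U_{\omega n,S}^{(i)} = \bC^{\omega n}$ diagonally with respect to the standard basis $B^{(i)} = \{v_t^{(i)}\}$, and by Lemma~\ref{lem:fixed-points-coincide} the fixed points for $T$ agree with those for the $\bC^*$-action of \cite{Cerulli2011}, a point $(U^{(i)})_{i \in \bZ_r}$ is $T$-fixed if and only if each $U^{(i)}$ is invariant under the full diagonal torus of $GL_{\omega n}$. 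First I would check that the weights $\mathrm{wt}(v_t^{(i)}) = \gamma_0^{t-1}\gamma_{i-t+1}$ appearing in the $T$-action are pairwise distinct within a fixed vertex $i$ (they differ already in the $\gamma_0$-exponent $t-1$ as $t$ ranges over $[\omega n]$); hence a $T$-fixed subspace $U^{(i)} \subseteq \bC^{\omega n}$ must be spanned by a subset of the standard basis. So a $T$-fixed point is precisely a tuple of coordinate subspaces $U^{(i)} = \mathrm{span}\{v_t^{(i)} : t \in J_i\}$ for some $J_i \subseteq [\omega n]$, and the dimension vector condition $\bdim U = (k\omega,\dots,k\omega)$ forces $|J_i| = k\omega$, i.e. $J_i \in \binom{[\omega n]}{k\omega}$.

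Next I would impose the subrepresentation condition $M_i(U^{(i)}) \subseteq U^{(i+1)}$ for every arrow $i \to i+1$ of $\Delta_r$. Using the explicit matrices \eqref{eq:linear-maps}, the map $M_i$ sends $v_l^{(i)}$ to $v_{l + s_{i+1}-s_i}^{(i)}$ reinterpreted in the target copy — more precisely, after the identification $b_a \otimes t^j \mapsto v^{(\bullet)}_{a+nj}$ of the Lemma identifying $M_{k\omega,n,\omega}(0)$ with $X_S(k,n,\omega)$, the map $M_i$ corresponds to $F_0^{s_{i+1}-s_i}$, which in coordinates is $v_l^{(i)} \mapsto v_{l+(s_{i+1}-s_i)}^{(i+1)}$ when $l + (s_{i+1}-s_i) \le \omega n$ and $v_l^{(i)} \mapsto 0$ otherwise (because of the nilpotency coming from $t^\omega = 0$). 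Thus $M_i(U^{(i)}) = \mathrm{span}\{v_{l + (s_{i+1}-s_i)}^{(i+1)} : l \in J_i, \ l + (s_{i+1}-s_i) \le \omega n\} = \mathrm{span}\{v_t^{(i+1)} : t \in \tau_{s_{i+1}-s_i}(J_i \cap \bZ_{\le \omega n - s_{i+1}+s_i})\}$, and this is contained in $U^{(i+1)} = \mathrm{span}\{v_t^{(i+1)} : t \in J_{i+1}\}$ if and only if $\tau_{s_{i+1}-s_i}(J_i \cap \bZ_{\le \omega n - s_{i+1}+s_i}) \subseteq J_{i+1}$. This is exactly the defining condition of $\mathcal{Jug}_S(k,n,\omega)$, so the assignment $(U^{(i)})_{i} \mapsto (J_i)_{i}$ is a well-defined bijection $X_S(k,n,\omega)^T \to \mathcal{Jug}_S(k,n,\omega)$ with the obvious inverse.

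I expect the only genuinely delicate point to be getting the index bookkeeping right: making sure that the shift $s_{i+1}-s_i$ in \eqref{eq:linear-maps}, the truncation caused by $t^\omega = 0$ (equivalently the range $[\omega n]$ rather than all of $\bZ$), and the cyclic convention $s_{r+1} = s_1 + n$ all match up with the definition of $\mathcal{Jug}_S(k,n,\omega)$, in particular that the arrow $s_r \to s_1$ of $\Delta_r$ contributes the condition with shift $s_1 + n - s_r$. Everything else — the reduction to coordinate subspaces, the count $|J_i| = k\omega$, and the equivalence of the containment conditions — is a routine consequence of the explicit description of the representation $U_{\omega n,S}$ together with the cited fixed-point finiteness results. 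I would also remark that this bijection is compatible with the projection maps $\mathrm{pr}_{S\setminus S'}$ of Theorem~\ref{thm:natural-projections}, which will be convenient later, though it is not needed for the statement itself.
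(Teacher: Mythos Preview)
Your proof is correct and follows essentially the same approach as the paper's: identify $T$-fixed points in each factor Grassmannian with coordinate subspaces indexed by $J_i \in \binom{[\omega n]}{k\omega}$, then translate the subrepresentation condition $M_i(U^{(i)}) \subseteq U^{(i+1)}$ via the explicit matrices \eqref{eq:linear-maps} into the shift-and-truncate condition defining $\mathcal{Jug}_S(k,n,\omega)$. You supply more detail than the paper (the distinctness of weights, the explicit computation of $M_i(v_l^{(i)})$, and the cyclic bookkeeping at $i=r$), but the argument is the same.
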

\begin{proof}
The $\bC^*$-fixed points (resp. $T$-fixed) of ${\rm Gr}_{k\omega}(\bC^{\omega n})$ are in bijection with the sets from $\binom{[\omega n]}{ k \omega}$. Namely, they are the vector spaces spanned by 
$v_j$ for $j \in J \in \binom{[\omega n]}{ k \omega}$ where the $v_j$'s denote the standard basis vectors of $\bC^{\omega n}$. Hence a $T$-fixed point of $X_S(k,n,\omega)$ is parameterized by an $r$-tuple $(J_i)_{i\in \bZ_r}$ of index sets $J_i \in \binom{[\omega n]}{ k \omega}$ such that 
\[M_{i} \Big( \big\langle v^{(i)}_k :  k \in J_{i} \big\rangle \Big) \subset \big\langle v^{(i+1)}_l :  l \in J_{i+1}  \big\rangle \quad \hbox{for all} \ i \in \bZ_r, \]
where as usual $v^{(i)}_j$ denotes the $j$-th standard basis vector in the $i$-th copy of $\bC^{\omega n}$. By the explicit description of the maps $M_i$, the above conditions are equivalent to the conditions in the definition of ${\mathcal Jug}_S(k,n,\omega)$.
\end{proof}

\subsection{Cellular decomposition and stratification}\label{sec:geometric-properties}
In this section we use the decomposition of $U_{\omega n,S}$ into indecomposable $\Delta_r$-representations 
to describe geometric properties of the varieties $X_S(k,n,\omega)$.

Let $A_\infty$ be the equioriented type $A$ quiver of infinite length.
By $V(i,j)$ we denote the indecomposable $A_\infty$-representation with vector spaces  $V(i,j)^{(k)}=\mathbb{C}$ for any $k\in [i,j]$ and maps $V(i,j)_{k}=\textrm{id}_{\mathbb{C}}$ for any $k\in[i,j-1]$. All other maps and vector spaces are zero.
Let $F: A_\infty \to \Delta_n$ send $k$ to $k \mod n$, and $(a:k \to k+1)$ to $(\overline{a}: k \mod n \to k+1 \mod n)$. This induces the $\Delta_n$-representation $U_i(\ell)$,
 with vector spaces $U_i(\ell)^{(j)}:=\bigoplus_{k\in F^{-1}(j)}V(i+1,i+\ell)^{(k)}$ for any $j\in \bZ_n$ and obvious linear maps. We use the notation $U(i;\ell):=U_{i-\ell}(\ell)$ if we want to emphasize that the indecomposable representation ends over the vertex $i \in \bZ_n$.

\begin{prop}
For $\omega \in \bN$, $k \in [n]$ and $S \subset [n]$ with $\# S =:r$, there is an isomorphism of $\Delta_n$-representations: 
\[ U_{\omega n,S} \cong \bigoplus_{i \in \bZ_r} U(i;\omega r) \otimes \bC^{q_i} \]
where $1\leq s_1< s_2<\dots < s_r\leq n$, $s_{r+1}:= s_1+n$ and $q_i := s_{i+1} -s_i$.
\end{prop}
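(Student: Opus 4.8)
The plan is to unwind the definition of $U_{\omega n,S}$ given in the introduction and match it term-by-term with the claimed indecomposable decomposition. Recall that $U_{\omega n,S}$ was defined as the image in $\mathrm{rep}_\bC(\Delta_r)$ of $\bigoplus_{a=1}^{n} M(a)$ under the functor induced by the $\mathrm{mod}\, n$ projection $A_\infty \to \Delta_r$, where $M(a)$ is the $A_\infty$-module of dimension $1$ at vertices $a+1,\dots,a+n\omega$. So the first step is to identify, for each $a \in [n]$, the image of $M(a)$ under the projection: since $M(a) = V(a+1, a+n\omega)$ in the notation just introduced, and the projection $A_\infty \to \Delta_r$ factors through nothing exotic, the image is the cyclic-quiver indecomposable supported on the length-$n\omega$ interval $[a+1, a+n\omega]$. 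The key combinatorial point is that this interval has length $n\omega$, and since $|S| = r$, wrapping this interval around the $r$-vertex cyclic quiver $\Delta_r$ visits each residue class a number of times governed by $\omega r$ — more precisely the image is the indecomposable $U(i;\omega r)$ for the appropriate endpoint $i \in \bZ_r$.

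Next I would carefully track which endpoint $i$ and with what multiplicity each $U(i;\omega r)$ occurs. The endpoint of $M(a) = V(a+1,a+n\omega)$ is the vertex $a + n\omega$, whose class in $\bZ_r$ (under the labeling of $\Delta_r$'s vertices by $S = \{s_1 < \dots < s_r\}$, identified with $\bZ_r$ via the $\mathrm{mod}\, n$ reduction restricted to $S$) depends only on $a \bmod n$. As $a$ runs over $[n] = \{1,\dots,n\}$, the endpoints $a+n\omega$ run over a complete set of residues mod $n$; I then need to count how many of these fall into the fiber over each $s_i \in S$. The count of $a \in [n]$ with $a + n\omega \equiv s_i \pmod n$, equivalently $a \equiv s_i \pmod n$ among $a \in [n]$ — wait, that would give multiplicity one for each — so the correct bookkeeping is subtler: the vertices of $\Delta_n$ between consecutive elements $s_i$ and $s_{i+1}$ of $S$ all get "merged" when we pass from $\Delta_n$ to $\Delta_r$, so the multiplicity $q_i = s_{i+1} - s_i$ arises precisely because the $q_i$ indecomposables of $\bigoplus_a M(a)$ ending at the $\Delta_n$-vertices $s_i, s_i+1, \dots, s_{i+1}-1$ all map to indecomposables of $\Delta_r$ ending at the single vertex (class of $s_i$), and they all have the same length $\omega r$ after projection. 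This is the heart of the argument and the step I expect to require the most care: correctly translating the "length $n\omega$ interval in $A_\infty$, then project mod $n$, then project $\Delta_n \to \Delta_r$" into "length $\omega r$ interval in $\Delta_r$", keeping the endpoint labels and multiplicities straight. It is essentially a two-stage application of the covering/push-down functor, and the arithmetic identity $n\omega / n \cdot r = \omega r$ together with $\sum_i q_i = n$ makes the dimension vector work out to $(k\omega,\dots,k\omega)$-ambient $(\omega n, \dots, \omega n)$, which is a useful consistency check.

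Concretely the write-up would go: (1) observe $M(a) = V(a+1,a+n\omega)$ and that the composite functor $A_\infty \to \Delta_n \to \Delta_r$ sends an interval indecomposable to an interval indecomposable of the cyclic quiver, of the same support-length, namely $n\omega$, which on $\Delta_r$ reads as $\omega r$ full wraps' worth, i.e. $U(\bar\imath; \omega r)$ where $\bar\imath$ is the class of the endpoint $a+n\omega$; (2) since $U_{\omega n,S}$ is by definition the image of $\bigoplus_{a=1}^n M(a)$ and each $M(a)$ maps onto an indecomposable, $U_{\omega n,S}$ is the direct sum of these $n$ indecomposables (images of a direct sum decompose, and here each summand survives since $\omega \ge 1$ means $n\omega \ge n > 0$ so no summand dies); (3) group the $n$ summands by endpoint class: the endpoints $a + n\omega$ for $a \in [n]$ hit each class in $\bZ_n$ once, and under the surjection $\bZ_n \twoheadrightarrow \bZ_r$ induced by $S \hookrightarrow \bZ_n$ (a vertex of $\Delta_n$ lands at the unique $s_i$ with $s_i \le \,\cdot\, < s_{i+1}$ after the appropriate shift), exactly $q_i = s_{i+1} - s_i$ of them map to the class of $s_i$; (4) conclude $U_{\omega n,S} \cong \bigoplus_{i \in \bZ_r} U(i;\omega r) \otimes \bC^{q_i}$, and double-check the dimension vector entry at vertex $s_i$ equals $\omega n$ using $\sum_i q_i = n$ and the fact that $U(i;\omega r)$ has dimension $\omega$ at every vertex of $\Delta_r$. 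The only genuine obstacle is the index bookkeeping in step (3); everything else is formal once the two push-down functors are composed correctly.
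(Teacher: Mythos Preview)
Your approach is different from the paper's and, with one gap filled, would work. The paper uses the explicit matrix definition of $U_{\omega n,S}$ from Section~2: it writes down a specific relabelling of the basis of the right-hand side and verifies that the resulting maps are exactly the shift matrices $M_j$ from \eqref{eq:linear-maps}. Your approach instead unpacks the introduction's description of $U_{\omega n,S}$ as a push-down of $\bigoplus_a M(a)$ and computes summand-by-summand. This is more conceptual and has the advantage of explaining where the multiplicities $q_i$ come from; the paper's approach is quicker but less illuminating.

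The gap is in your step (1). You write that the composite functor sends an interval module to ``an interval indecomposable of the cyclic quiver, of the same support-length, namely $n\omega$, which on $\Delta_r$ reads as $\omega r$ full wraps' worth''. This phrasing is muddled: the functor $\mathrm{pr}_{[n]\setminus S}$ is not a covering functor, and it does \emph{not} preserve the length parameter --- it drops the $(n-r)\omega$ basis vectors sitting over vertices outside $S$, leaving a representation of total dimension $\omega r$, not $n\omega$. More to the point, it is not automatic that the image is indecomposable. What you need to say is: the coil $U_a(n\omega)$ has a totally ordered basis $e_1,\dots,e_{n\omega}$ with each arrow sending $e_m\mapsto e_{m+1}$; after deleting the $e_m$ lying over $[n]\setminus S$ and composing arrows, the $\omega r$ surviving vectors still form a single chain (since consecutive survivors differ by exactly one $q_i$), hence the image is $U(\underline a;\omega r)$ with $\underline a$ the largest $i$ with $s_i\le a$. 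This is easy but it is the real content; your sentence ``the only genuine obstacle is the index bookkeeping in step (3)'' understates it. A minor additional point: since the paper's operative definition is the matrix one, you are implicitly using that $\mathrm{pr}_{[n]\setminus S}(U_{\omega n})=U_{\omega n,S}$, which is immediate from composing shift-by-$1$ matrices but should be noted.
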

\begin{proof}
Both representations have dimension vector $(\omega n, \dots, \omega n) \in \bZ^r$ since the sum of all $q_i$ equals $n$ by construction. Let us index the basis of the representation on the right hand side in the following way:

The basis vector of $\bC$ in the $k$-th position of the summand $U(i;\omega r)$ for $s \in [q_i]$ is represented by the $(i+k+s-2)$-nd basis vector of $\bC^{\omega n}$ over the $(i+k-1)$-st vertex of $\Delta_r$. Now the linear maps (of the representation on the RHS) in this basis are represented by the matrices $M_j$ for $j \in \bZ_r$ as defined in Equation~\eqref{eq:linear-maps}.  
\end{proof}
\begin{rem}\label{nilp}
Let $\mr{rep}_\bC^{N}\Delta_r$ be the category with objects $M \in \mr{rep}_\bC\Delta_r$ such that 
\[ M_{i+N-1} \circ  M_{i+N-2} \circ \dots \circ M_{i+1}\circ M_i = 0 \ \mr{for} \ \mr{all} \ i \in \bZ_r.\]
The projective and injective objects of $\mr{rep}_\bC^{N}\Delta_r$ are exactly the $U_i(N)$ where each $U_i(N)$ is projective and injective.
\end{rem}
\begin{dfn}
$M \in \mr{rep}_\bC\Delta_r$ is nilpotent if there exists an integer $N \in \bN$ such that $M \in \mr{rep}_\bC^{N}\Delta_r$.    
\end{dfn}
The $\bC^*$-action on $X:=X_S(k,n,\omega)$ induces the BB-decomposition (cf. \cite{Birula1973}):
\begin{equation}\label{eqn:BB-decomposition}
X=\bigcup_{p \in X^{\bC^*}} W_p,  \quad \quad  \mathrm{with} \quad \quad  W_p := \left\{ x \in X : \lim_{z \to 0} z.x =p \right\}.
\end{equation}
where $W_p$ is called attracting set of $p$. 
\begin{lem}\label{lem:cell-decomp}
The BB-decomposition of $X_S(k,n,\omega)$ is cellular.
\end{lem}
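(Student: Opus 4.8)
The plan is to show that each Bialynicki-Birula attracting set $W_p$ is an affine space, which forces the decomposition to be cellular. Since $X:=X_S(k,n,\omega)$ is a projective variety with a $\bC^*$-action having finitely many fixed points (Lemma~\ref{lem:fixed-points-coincide}), the BB-decomposition \eqref{eqn:BB-decomposition} is a decomposition into locally closed subvarieties, each $W_p$ being an affine bundle over the point $p$; the remaining task is to verify that each $W_p$ is in fact isomorphic to an affine space $\bC^{d_p}$.

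The natural approach is to use the quiver-representation-theoretic description. By the decomposition $U_{\omega n,S} \cong \bigoplus_{i \in \bZ_r} U(i;\omega r) \otimes \bC^{q_i}$ from the preceding Proposition, the representation $U_{\omega n,S}$ is a nilpotent $\Delta_r$-representation whose indecomposable summands are all of the projective-injective form $U_i(\omega r)$ (see Remark~\ref{nilp}), living in $\mr{rep}_\bC^{\omega r}\Delta_r$. First I would invoke the general machinery for quiver Grassmannians of such ``string-like'' nilpotent representations (as developed in \cite{Cerulli2011,Pue2019,Pue2020,LaPu2020}): for the $\bC^*$-action coming from the grading $\mr{wt}(v^{(i)}_t)=t$ on the basis, one considers the associated fixed point $p$ given by a subset collection $(J_i)_{i\in\bZ_r}$ as in Lemma~\ref{lem:T-fixed-point-parametrisation}, and describes $W_p$ as the set of subrepresentations $U \subseteq U_{\omega n,S}$ flowing to $p$. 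Concretely, $W_p$ is cut out inside an affine chart — the set of subrepresentations $U$ transverse to a fixed complement of $p$ — by the subrepresentation conditions $M_i(U^{(i)})\subseteq U^{(i+1)}$, and the weight condition forces the defining coordinates to be ``upper-triangular'' in the appropriate sense. One then checks that the subrepresentation equations in these coordinates are linear, so $W_p$ is a linear subspace of the affine chart, hence an affine space.

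More precisely, I would proceed as follows. Fix $p \in X^{\bC^*}$ corresponding to $(J_i)_{i\in\bZ_r}$; write $N_i := \langle v^{(i)}_j : j\in J_i\rangle$, so $p = (N_i)_i$. Choose the $\bC^*$-stable complements $N_i' := \langle v^{(i)}_j : j\notin J_i\rangle$ and consider the open affine cell $\mathcal{O}_p := \{U \in \prod_i \mr{Gr}_{k\omega}(\bC^{\omega n}) : U^{(i)} \oplus N_i' = \bC^{\omega n}\}$, which is isomorphic to $\prod_i \mr{Hom}(N_i, N_i')$ via graphs. The fixed-point weights make the $\bC^*$-action linear on $\mathcal{O}_p$, and $W_p \cap \mathcal{O}_p$ (which equals $W_p$, since $p$ is the unique fixed point in $\mathcal{O}_p$) is the subset of those graph homomorphisms $\varphi = (\varphi_i)$ with nonnegative weights that additionally satisfy the subrepresentation constraint $M_i(U^{(i)})\subseteq U^{(i+1)}$. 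The key computation is that, because the maps $M_i$ send standard basis vectors to standard basis vectors (or to zero) by Equation~\eqref{eq:linear-maps}, this constraint translates into a system of \emph{linear} equations among the matrix entries of the $\varphi_i$ (one linear relation expressing certain entries of $\varphi_{i+1}$ in terms of entries of $\varphi_i$, restricted to the coordinates of positive weight). Intersecting the linear subspace of positive-weight homomorphisms with this linear system yields a linear subspace of $\prod_i \mr{Hom}(N_i,N_i')$, i.e.\ an affine space; thus $W_p \cong \bC^{d_p}$ and the decomposition is cellular.

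The main obstacle I anticipate is the bookkeeping needed to verify the linearity claim cleanly, particularly keeping track of the indexing through the $\mod n$ folding and the matrices $M_i$ of Equation~\eqref{eq:linear-maps} with their shifts by $s_{i+1}-s_i$, and to confirm that the subrepresentation conditions never produce nonlinear (quadratic) interactions between the graph coordinates — which a priori they could, since the subrepresentation condition on graphs of homomorphisms is generally quadratic. The reason it is in fact linear here is the special form of the representation: $U_{\omega n,S}$ is nilpotent with projective-injective indecomposable summands, and for such representations the relevant Grassmannian conditions linearize; I would cite the corresponding results of \cite{Cerulli2011} (on the finiteness and cell structure for nilpotent cyclic-quiver representations) and \cite{LaPu2020,Pue2019} (on the explicit affine-space structure of BB-cells for quiver Grassmannians with torus actions satisfying the relevant hypotheses), and adapt them to the present $U_{\omega n,S}$, which is exactly of the required type. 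Finally, one notes that the closure of $X$ is projective and the $W_p$ cover $X$ by \eqref{eqn:BB-decomposition}, so the affine pieces $W_p$ furnish a genuine cellular (CW, paving by affines) decomposition.
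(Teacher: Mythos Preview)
Your proposal is correct and follows essentially the same route as the paper: the paper's proof simply observes that the chosen $\bC^*$-weights coincide with those in \cite[Section~4.4]{Pue2020} and then invokes \cite[Theorem~4.13]{Pue2020} directly, while you sketch the content of that theorem (affine chart at $p$, graph coordinates, subrepresentation constraints) and cite the same body of work. One small caution: the subrepresentation condition on graph coordinates is \emph{a priori} quadratic, and the reason the cell is still an affine space is not literally that the equations are linear but that they have a recursive (triangular) structure allowing explicit solution---this is precisely what \cite[Theorem~4.13]{Pue2020} establishes and what underlies the coordinate description in Equation~\eqref{eqn:coordinates-in-cells}; you flag this as the main obstacle, so just be sure to state it accurately rather than as ``linear''.
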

\begin{proof}
The weights of the $\bC^*$-action from Section~\ref{sec:torus-action} coincide with the weights in \cite[Section~4.4]{Pue2020}. Hence the pieces of the BB-decomposition are cells by \cite[Theorem~4.13]{Pue2020}.
\end{proof}
Before we describe the structure of ${\rm Aut}_{\Delta_r}(U_{\omega n,S})$
we recall the following result which is a special case of \cite[Proposition~2.10]{FLP23}.
\begin{prop} \label{Aut}
The elements of the automorphism group ${\rm Aut}_{\Delta_n}(U_{\omega n})$
 are exactly the matrix tuples $ A = (A_i)_{i \in \mathbb{Z}_n}$ with
\[ 
A_i = \begin{pmatrix}
a^{(i)}_{1,1} & & & &\\
a^{(i)}_{2,1}& a^{(i-1)}_{1,1} & & &\\
\vdots & \vdots & \ddots& &\\
a^{(i)}_{\omega n-1,1} & a^{(i-1)}_{\omega n-2,1} & \hdots & a^{(i-\omega n+2)}_{1,1} &\\
a^{(i)}_{\omega n,1} & a^{(i-1)}_{\omega n-1,1} & \hdots & a^{(i-\omega n-2)}_{2,1} & a^{(i-\omega n+1)}_{1,1}
\end{pmatrix} 
\]
where $a^{(i)}_{k,1} \in \mathbb{C}$ for all $i \in \mathbb{Z}_n$, $k \in [2,\omega n]$ and $a^{(i)}_{1,1} \in \mathbb{C}^*$ for all $i \in \mathbb{Z}_n$. In particular, $\dim_\bC {\rm Aut}_{\Delta_n}(U_{\omega n})=\omega n^2$.
\end{prop}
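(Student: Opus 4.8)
The plan is to describe the whole endomorphism algebra $\mathrm{End}_{\Delta_n}(U_{\omega n})$ by solving the defining commutation relations entry by entry, and then to obtain $\mathrm{Aut}_{\Delta_n}(U_{\omega n})$ as its group of units. Since $S=[n]$ we have $r=n$, every $q_i=1$, and by \eqref{eq:linear-maps} each structure map $M_i\colon\bC^{\omega n}\to\bC^{\omega n}$ is the nilpotent shift $(M_i)_{k,l}=\delta_{k,l+1}$; equivalently $U_{\omega n}\cong\bigoplus_{i\in\bZ_n}U(i;\omega n)$ is a direct sum of the $n$ uniserial nilpotent indecomposables of length $\omega n$ (one may further note that, by Remark~\ref{nilp}, this is exactly the regular module of the Nakayama algebra $\mr{rep}_\bC^{\omega n}\Delta_n$, so $\mathrm{End}_{\Delta_n}(U_{\omega n})$ is that algebra's opposite). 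I would then write a $\Delta_n$-endomorphism as a tuple $A=(A_i)_{i\in\bZ_n}$ with $A_i=\big(a^{(i)}_{k,l}\big)_{k,l\in[\omega n]}$ and compare matrix entries in $A_{i+1}M_i=M_iA_i$ for all $i\in\bZ_n$. This yields the bulk identity $a^{(i)}_{k,l}=a^{(i+1)}_{k+1,l+1}$ for $1\le k,l\le\omega n-1$, together with the two boundary identities $a^{(i)}_{1,l}=0$ for $l\ge 2$ (from the row $k=1$) and $a^{(i)}_{k,\omega n}=0$ for $k\le\omega n-1$ (from the column $l=\omega n$).

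Next I would solve these relations by propagating each entry along its diagonal. Pushing an entry strictly above the diagonal ($k<l$) toward the last column using the bulk identity forces it to coincide with an entry of column $\omega n$ lying in some row $<\omega n$, which vanishes; hence every $A_i$ is lower triangular. Pushing an entry with $k\ge l$ toward the first column gives $a^{(i)}_{k,l}=a^{(i-l+1)}_{k-l+1,1}$, with the superscript read modulo $n$. Thus $\mathrm{End}_{\Delta_n}(U_{\omega n})$ consists exactly of the lower-triangular tuples of the shape displayed in the statement, the free parameters being the first-column entries $a^{(i)}_{k,1}$ with $i\in\bZ_n$, $k\in[\omega n]$; in particular $\dim_\bC\mathrm{End}_{\Delta_n}(U_{\omega n})=\omega n^2$, which cross-checks with $\sum_{i,j\in\bZ_n}\dim\mathrm{Hom}\big(U(i;\omega n),U(j;\omega n)\big)=n^2\omega$ (each such $\mathrm{Hom}$-space has dimension $\omega$ because $U(i;\omega n)$ is projective in $\mr{rep}_\bC^{\omega n}\Delta_n$ with top supported at a single vertex).

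Finally I would cut out the automorphisms. A tuple $A$ of the above form is invertible in $\mathrm{End}_{\Delta_n}(U_{\omega n})$ if and only if each matrix $A_i$ is invertible, in which case $\big(A_i^{-1}\big)_{i\in\bZ_n}$ is again a $\Delta_n$-morphism, since $A_{i+1}M_i=M_iA_i$ is equivalent to $M_iA_i^{-1}=A_{i+1}^{-1}M_i$. Being lower triangular, $A_i$ is invertible if and only if its diagonal entries $a^{(i-k+1)}_{1,1}$, $k=1,\dots,\omega n$, are all nonzero; as $\omega n\ge n$ these superscripts exhaust $\bZ_n$, so imposing invertibility for every $i\in\bZ_n$ amounts to the single requirement $a^{(j)}_{1,1}\in\bC^*$ for all $j\in\bZ_n$. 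This is precisely the asserted description of $\mathrm{Aut}_{\Delta_n}(U_{\omega n})$, and since it is a dense open subset of the affine space $\mathrm{End}_{\Delta_n}(U_{\omega n})$ we also get $\dim_\bC\mathrm{Aut}_{\Delta_n}(U_{\omega n})=\omega n^2$.

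The one genuinely delicate point is the bookkeeping in the middle step: while iterating the bulk identity along a diagonal one must keep track of the wrap-around of the superscript modulo $n$ and use that the nilpotency degree $\omega n$ is a multiple of $n$, so that the diagonals close up consistently; everything else is routine linear algebra. Alternatively, the statement is the $S=[n]$ specialization of \cite[Proposition~2.10]{FLP23}.
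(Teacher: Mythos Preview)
Your proof is correct. The paper does not actually give a proof of this proposition: it simply states it as a special case of \cite[Proposition~2.10]{FLP23}, exactly as you observe in your final remark. Your direct argument---writing out the commutation relations $A_{i+1}M_i=M_iA_i$ entrywise, propagating along diagonals, and then passing to units---is precisely the method the paper employs in the immediately following proposition to describe $\mathrm{Aut}_{\Delta_r}(U_{\omega n,S})$ for general $S$ (see Equation~\eqref{eqn:AutGp}), so your approach is fully aligned with the paper's techniques and in fact supplies the details the paper omits.
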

Let $\mathcal{T}_q$ denote the $m \times m$ matrix with entries 
\begin{equation}
 \big(\mathcal{T}_q\big)_{k,l} := \delta_{k,l+q} = \begin{cases} 1 & {\rm if} \ k  = l+q,\\ 0 & {\rm otherwise}\end{cases} \quad \mathrm{for} \ k,l \in [m]. 
\end{equation} 
\begin{prop}
Let $\omega \in \bN$, $k \in [n]$ and $S \subset [n]$ with $\# S =:r$,  $1\leq s_1< s_2<\dots < s_r\leq n$, $s_{r+1}:= s_1+n$ and $q_i := s_{i+1} -s_i$. The elements of the automorphism group ${\rm Aut}_{\Delta_r}(U_{\omega n,S})$
 are matrix tuples $ A = (A_i)_{i \in \mathbb{Z}_r}$ with lower triangular block matrices 
\[ 
A_i = \begin{pmatrix}
(a^{(i)}_{1,k,l}) & & & &\\
(a^{(i)}_{2,k,l})& (a^{(i-1)}_{1,k,l}) & & &\\
\vdots & \vdots & \ddots& &\\
(a^{(i)}_{\omega r-1,k,l}) & (a^{(i-1)}_{\omega r-2,k,l}) & \hdots & (a^{(i-\omega r+2)}_{1,k,l}) &\\
(a^{(i)}_{\omega r,k,l}) & (a^{(i-1)}_{\omega r-1,k,l}) & \hdots & (a^{(i-\omega r-2)}_{2,k,l}) & (a^{(i-\omega r+1)}_{1,k,l})
\end{pmatrix} 
\]
The diagonal blocks $(a^{(i)}_{1,k,l})$ are invertible and of size $q_i \times q_i$ and the blocks $(a^{(i)}_{j,k,l})$ below the diagonal are of size $q_i \times q_{i-j+1}$. 
In particular
\[ \dim_\bC {\rm Aut}_{\Delta_r}(U_{\omega n,S})= \sum_{i=1}^{r} \sum_{j=1}^{\omega r} q_i\cdot q_{i-j+1}. \]
\end{prop}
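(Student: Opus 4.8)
The statement to prove is a description of $\mr{Aut}_{\Delta_r}(U_{\omega n,S})$ as lower-triangular block matrix tuples, together with the dimension formula. The natural approach is to reduce it to the already-established description of $\mr{Aut}_{\Delta_n}(U_{\omega n})$ in Proposition~\ref{Aut} via the decomposition $U_{\omega n,S} \cong \bigoplus_{i \in \bZ_r} U(i;\omega r) \otimes \bC^{q_i}$ obtained in the preceding proposition. So the plan is: first recall that for any quiver $Q$ and any $Q$-representation $M = \bigoplus_\alpha M_\alpha^{\oplus m_\alpha}$ with $M_\alpha$ pairwise non-isomorphic indecomposables, $\mr{End}_Q(M)$ is built from the $\mr{Hom}$-spaces $\mr{Hom}_Q(M_\alpha, M_\beta)$ arranged in a block pattern, and $\mr{Aut}_Q(M)$ is the unit group of this (finite-dimensional) algebra. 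Here $M_\alpha$ ranges over the $U(i;\omega r)$ for $i \in \bZ_r$, each appearing with multiplicity $q_i$.

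\textbf{Key steps.} First I would compute $\mr{Hom}_{\Delta_r}(U(i;\omega r), U(j;\omega r))$ for all $i,j \in \bZ_r$. The representations $U(i;\omega r)$ are exactly the projective-injective objects of $\mr{rep}_\bC^{\omega r}\Delta_r$ (Remark~\ref{nilp}), and since $U_{\omega n,S}$ lies in this category, all relevant homomorphisms can be computed inside it; being projective \emph{and} injective makes these Hom-spaces easy to control. Explicitly, a morphism $U(i;\omega r) \to U(j;\omega r)$ is determined by where it sends the generator at the top vertex, subject to the support/nilpotency constraint, and one finds $\dim_\bC \mr{Hom}_{\Delta_r}(U(i;\omega r), U(j;\omega r))$ depends only on the "cyclic distance" between $i$ and $j$ — this is precisely the content that produces the scalar entries $a^{(i)}_{j,k,l}$ in the theorem, the lower index tracking which Hom-space and the indices $k,l$ tracking the multiplicity slots. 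Second, I would assemble $\mr{End}_{\Delta_r}(U_{\omega n,S})$ as the matrix of $\mr{Hom}$-blocks: the block from the $U(i-j+1;\omega r)^{\oplus q_{i-j+1}}$ summand to the $U(i;\omega r)^{\oplus q_i}$ summand has size $q_i \times q_{i-j+1}$, exactly the shape displayed. Third, I would identify the automorphism group as the invertible elements: since $\mr{rep}_\bC^{\omega r}\Delta_r$ behaves like a "nilpotent" setting, the endomorphism algebra is (block) lower-triangular with respect to the filtration by cyclic distance, so invertibility is equivalent to invertibility of the diagonal blocks $(a^{(i)}_{1,k,l})$, which are the $q_i \times q_i$ blocks coming from $\mr{End}_{\Delta_r}(U(i;\omega r)) = \mr{Hom}$ at distance zero. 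Finally, the dimension formula follows by summing the block sizes: $\dim_\bC \mr{Aut}_{\Delta_r}(U_{\omega n,S}) = \dim_\bC \mr{End}_{\Delta_r}(U_{\omega n,S}) = \sum_{i=1}^r \sum_{j=1}^{\omega r} q_i \cdot q_{i-j+1}$, since the automorphism group is Zariski-open in the endomorphism algebra.

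\textbf{Main obstacle.} The delicate point is not the abstract framework but making the Hom-space computation between the $U(i;\omega r)$ explicit and matching it against the precise block pattern in the statement — in particular verifying that $\mr{Hom}_{\Delta_r}(U(i;\omega r), U(j;\omega r))$ is $1$-dimensional for each relevant cyclic offset (so that each scalar $a^{(i)}_{\bullet,k,l}$ really is a single number, not a larger block within the multiplicity slot) and that the composition/multiplication in $\mr{End}$ produces exactly the "Toeplitz-like" lower-triangular shape of Proposition~\ref{Aut}, now refined by the $q_i$-sized blocks. Concretely one checks that $U(i;\omega r)$, as a representation of the cyclic quiver with all $\omega r$-fold path compositions zero, is uniserial with composition length $\omega r$, so Hom-spaces between two such are at most $1$-dimensional in each degree; this is where I would spend the most care. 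Once that is pinned down, the block-triangularity, the characterization of units via the diagonal blocks, and the dimension count are all routine bookkeeping, parallel to the proof of Proposition~\ref{Aut} (equivalently \cite[Proposition~2.10]{FLP23}) with multiplicities $q_i$ inserted.
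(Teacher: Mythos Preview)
Your approach via the Krull--Schmidt decomposition $U_{\omega n,S}\cong\bigoplus_{i\in\bZ_r}U(i;\omega r)\otimes\bC^{q_i}$ is correct and is genuinely different from the paper's proof, which simply writes down the commutation relations $E_{i+1}\mathcal{T}_{q_i}=\mathcal{T}_{q_i}E_i$ for the shift matrices $\mathcal{T}_{q_i}=M_i$ and solves them directly by hand to obtain the block shape. Your route is more structural and explains \emph{why} the block pattern appears; the paper's route is shorter and immediately produces the Toeplitz-type identification of blocks across different vertices (the fact that the $(p,q)$-block of $A_i$ coincides with the $(p{-}1,q{-}1)$-block of $A_{i-1}$), which in your approach requires an extra compatible-basis argument you have not spelled out.

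There is, however, one point you should correct: $\mr{Hom}_{\Delta_r}\big(U(i;\omega r),U(j;\omega r)\big)$ is not $1$-dimensional but $\omega$-dimensional for \emph{every} pair $i,j\in\bZ_r$. Indeed $U(i;\omega r)$ is uniserial of length $\omega r$ with each simple occurring exactly $\omega$ times, so the generator can be sent to any of the $\omega$ occurrences of the matching simple in the target. Thus the Krull--Schmidt block decomposition of $\mr{End}_{\Delta_r}(U_{\omega n,S})$ is an $r\times r$ array of $\omega$-dimensional Hom-spaces tensored with multiplicity matrices, and only after choosing the natural basis of each Hom-space by the $\omega$ shift maps do you obtain the $\omega r\times\omega r$ block-lower-triangular picture in the statement. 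Your phrase ``$1$-dimensional in each degree'' is salvageable under this reading, but as written your ``main obstacle'' paragraph suggests you expected $\dim\mr{Hom}=1$ per pair, which would give only $r^2$ blocks, not $\omega r\cdot\omega r$. Once this is fixed, the invertibility criterion (diagonal blocks invertible, since the radical of $\mr{End}$ is exactly the span of the nonidentity shift maps) and the dimension formula $\sum_{i,j}q_iq_{i-j+1}=\omega n^2$ follow as you indicate.
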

\begin{proof}
First we compute the endomorphism algebra ${\rm End}_{\Delta_r}(U_{\omega n,S})$. By definition it holds that $(E_i)_{i\in\bZ_r} \in {\rm End}_{\Delta_r}(U_{\omega n,S})$ if and only if
\[ E_{i+1} \mathcal{T}_{q_i} = \mathcal{T}_{q_{i+1}} E_i \quad \mathrm{for} \ \mathrm{all} \ i \in \mathbb{Z}_r. \]
This is equivalent to 
\begin{equation}\label{eqn:AutGp}
    E_{i+1} \mathcal{T}_{q_i}(v_l^{(i)}) = \mathcal{T}_{q_{i+1}} E_i(v_l^{(i)}) \quad \mathrm{for} \ \mathrm{all} \ i \in \mathbb{Z}_r, \, l \in [\omega n].
\end{equation}  
From these equations we obtain the claimed shape of the lower triangular block matrices. Imposing the condition that the matrices should be invertible we obtain the desired description of the automorphisms.
\end{proof}
\begin{rem}
    The projections ${\rm pr}_j$ induce the group homomorphism
\[{\rm pr}_{S \setminus S'} :  {\rm Aut}_{\Delta_r}(U_{\omega n,S}) \to {\rm Aut}_{\Delta_{r'}}(U_{\omega n,S'}). \]
This induces an $\mr{Aut}_{\Delta_n}(U_{\omega n})$-action on $X_S(k,n,\omega)$. For the orbits of this induced action we write $\mr{Aut}_{\Delta_n}(U_{\omega n})$-orbits for short.
\end{rem}
\begin{thm}\label{trm:cells-are-strata}
For $\omega\in \bN$, $k \in [n]$ and $S \subset [n]$, each cell $C \subset X_S(k,n,\omega)$ is $T$-stable and contains exactly one $T$-fixed point $P_C$. The $\mr{Aut}_{\Delta_n}(U_{\omega n})$-orbit of $P_C$ coincides with $C$ whereas the stratum of $P_C$ coincides with the $\mr{Aut}_{\Delta_r}(U_{\omega n,S})$-orbit of $P_C$ and contains $C$.
\end{thm}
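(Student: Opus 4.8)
The plan is to establish the four assertions in order: $T$-stability of each BB-cell, the existence of a unique $T$-fixed point in each cell, the identification of the cell with an $\mr{Aut}_{\Delta_n}(U_{\omega n})$-orbit, and the identification of the stratum with the $\mr{Aut}_{\Delta_r}(U_{\omega n,S})$-orbit together with the inclusion $C\subseteq \mathcal{S}_{P_C}$.

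First I would treat the torus statements. By Lemma~\ref{lem:fixed-points-coincide} the $\bC^*$-fixed points and the $T$-fixed points coincide, and by Lemma~\ref{lem:cell-decomp} the BB-decomposition of $X_S(k,n,\omega)$ with respect to the cocharacter $\lambda$ is cellular, with the cells $W_p$ indexed by $p\in X^{\bC^*}$. Each $W_p$ contains the single $\bC^*$-fixed point $p$ by construction of the Bialynicki-Birula decomposition \eqref{eqn:BB-decomposition}, so $P_C:=p$ is the unique $T$-fixed point in $C=W_p$. For $T$-stability: since the $\bC^*$ giving the BB-decomposition is a cocharacter of $T$ and $T$ is connected and abelian, $T$ normalizes this $\bC^*$, hence $T$ permutes the attracting sets $W_p$; but $T$ fixes $p$ (as $p\in X^T$ by Lemma~\ref{lem:fixed-points-coincide}), so $t.W_p=W_{t.p}=W_p$, proving each cell is $T$-stable. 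Alternatively one invokes \cite[Theorem~4.13]{Pue2020}, where the cells are cut out by linear conditions manifestly compatible with the larger torus action coming from the grading of $U_{\omega n,S}$.

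Next comes the orbit identification $C = \mr{Aut}_{\Delta_n}(U_{\omega n}).P_C$. The group $\mr{Aut}_{\Delta_n}(U_{\omega n})$ acts on $X_S(k,n,\omega)$ via the homomorphism $\mr{pr}_{[n]\setminus S}$ to $\mr{Aut}_{\Delta_r}(U_{\omega n,S})$ as in the Remark after the automorphism description. The key point is that this group is unipotent-by-torus: by Proposition~\ref{Aut} its elements are lower-triangular with unit-group diagonal entries, and conjugating by the cocharacter $\lambda$ scales the strictly-lower-triangular matrix entries by positive powers of $z$, so the unipotent radical $N$ of $\mr{Aut}_{\Delta_n}(U_{\omega n})$ is exactly the part contracted to the identity as $z\to 0$. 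Therefore $N.P_C\subseteq W_{P_C}=C$ (the orbit of a fixed point under a contracting unipotent group flows into its attracting set), while the diagonal torus inside $\mr{Aut}_{\Delta_n}(U_{\omega n})$ fixes $P_C$ (a coordinate subspace), so $\mr{Aut}_{\Delta_n}(U_{\omega n}).P_C = N.P_C\subseteq C$. For the reverse inclusion one counts dimensions: $N.P_C$ is a locally closed affine-space orbit whose tangent space at $P_C$ is the image of $\mathrm{Lie}(N)$ under the action map, and one checks this image equals $T_{P_C}C$ (the positive-weight part of $T_{P_C}X$), which by \cite[Theorem~4.13]{Pue2020} has dimension $\dim C$; since $C$ is an affine cell, irreducible, and $N.P_C$ is an irreducible locally closed subset of $C$ of the same dimension, they coincide. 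This dimension comparison — matching $\dim N.P_C$ with $\dim C$ via the explicit weight combinatorics of \eqref{eqn:AutGp} and of the cells in \cite{Pue2020} — is the step I expect to be the main obstacle, as it requires a careful bookkeeping of which entries $a^{(i)}_{j,k,l}$ act nontrivially on the coordinate subspace $P_C$.

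Finally, for the stratum: $\mathcal{S}_{P_C}$ is the $\mr{Aut}_{\Delta_r}(U_{\omega n,S})$-orbit of $P_C$ by \cite[Lemma~2.4]{CFR12} (the stratum of a point in a quiver Grassmannian is exactly its automorphism-group orbit), applied here with $Q=\Delta_r$ and $M=U_{\omega n,S}$. The inclusion $C\subseteq \mathcal{S}_{P_C}$ then follows from the previous paragraph: $C=\mr{Aut}_{\Delta_n}(U_{\omega n}).P_C$, and $\mr{Aut}_{\Delta_n}(U_{\omega n})$ acts on $X_S(k,n,\omega)$ through its image in $\mr{Aut}_{\Delta_r}(U_{\omega n,S})$, so every point of $C$ lies in the $\mr{Aut}_{\Delta_r}(U_{\omega n,S})$-orbit of $P_C$, i.e.\ in $\mathcal{S}_{P_C}$. (In general the inclusion is strict because $\mr{pr}_{[n]\setminus S}$ need not be surjective and $\mr{Aut}_{\Delta_r}(U_{\omega n,S})$ is larger, so a single stratum can be a union of several cells, consistently with the first two theorems.) This completes the proof.
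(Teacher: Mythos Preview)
Your treatment of $T$-stability and the uniqueness of the $T$-fixed point is essentially the paper's argument.

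There is a genuine gap in your stratum identification. The claim ``the stratum of a point in a quiver Grassmannian is exactly its automorphism-group orbit'' is \emph{not} the content of \cite[Lemma~2.4]{CFR12}; that lemma only asserts irreducibility of strata. In general an $\mr{Aut}_Q(M)$-orbit is contained in a stratum but need not exhaust it. The paper obtains equality by observing that $U_{\omega n,S}$ is an \emph{injective} object of $\mr{rep}_\bC^{\omega r}(\Delta_r)$ (cf.\ Remark~\ref{nilp}) and then invoking \cite[Lemma~2.28]{Pue2019}, which says that for injective $M$ the strata coincide with the $\mr{Aut}$-orbits. Without this injectivity hypothesis your argument for $\mathcal{S}_{P_C}=\mr{Aut}_{\Delta_r}(U_{\omega n,S}).P_C$ is unsupported.

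For the identification $C=\mr{Aut}_{\Delta_n}(U_{\omega n}).P_C$ you take a genuinely different route from the paper. You argue directly via the lower-triangular structure of Proposition~\ref{Aut}: the unipotent radical is contracted by the cocharacter, so its orbit through $P_C$ lies in $W_{P_C}$, and then a dimension comparison forces equality. You correctly flag this dimension count as the main obstacle, and indeed carrying it out uniformly over all fixed points would require nontrivial bookkeeping. The paper sidesteps this entirely: it uses the surjectivity of $\mr{pr}_{[n]\setminus S}:X_{[n]}(k,n,\omega)\to X_S(k,n,\omega)$ (Theorem~\ref{thm:natural-projections}, proved independently via Propositions~\ref{prop:surjective-on-fixed-points} and~\ref{prop:sujection-on-cells}) together with the already-established case $S=[n]$, where cells and $\mr{Aut}_{\Delta_n}(U_{\omega n})$-orbits coincide. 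Since the projection is $T$-equivariant it sends cells onto cells, and since the $\mr{Aut}_{\Delta_n}(U_{\omega n})$-action on $X_S$ factors through $\mr{pr}_{[n]\setminus S}$, orbits map onto orbits; equality upstairs therefore descends. This reduction-to-the-full-case argument is shorter and avoids the computation you anticipate as difficult.
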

\begin{proof}
The explicit description of the cells from Lemma~\ref{lem:cell-decomp} together with Lemma~\ref{lem:fixed-points-coincide} implies that each cell is $T$-stable and contains exactly one $T$-fixed point. The $\Delta_r$-representation $U_{\omega n,S}$ is injective. Hence \cite[Lemma~2.28]{Pue2019} implies that strata are $\mr{Aut}_{\Delta_r}(U_{\omega n,S})$-orbits.  The projection $\mr{pr}_{[n] \setminus S}$ is surjective by Theorem~\ref{thm:natural-projections} and, by $T$-equivariance, sends fixed points to fixed points and cells to cells. Thus the $\mr{Aut}_{\Delta_n}(U_{\omega n})$-orbits coincide with the cells.
\end{proof}

\begin{rem}
    For the maximal covering set $[n]$ of $S$ we obtain the cellular decomposition of the induced Aut-action. With the same arguments as above we can prove that the induced Aut-action for any covering set of $S$ provides a refinement of the stratification of $X_S(k,n,\omega)$ into the $\mr{Aut}_{\Delta_r}(U_{\omega n,S})$-orbit.
\end{rem}
\begin{rem}\label{rem:more-fixed-points-than-strata}
In contrast to the setting of \cite{FLP23}, there exist strata which contain more than one $T$-fixed point if $\#S < n$.
\end{rem}
\begin{example}
Let $n=2$, $k=\omega=1$ and $S=\{1\}$. Then $U_2=U_1(2) \oplus U_2(2)$. The image of $U_2$ under the projection $\mr{pr}_2$ is $U_{2,\{1\}} = U_1(1)\otimes \bC^2$ and $X_{\{1\}}(1,2,1)\cong \mr{Gr}_1(\bC^2)$ contains exactly two $\bC^*$-fixed points which are both isomorphic to $U_1(1)$ as $\Delta_1$-representations. 
\end{example}
\begin{rem}
The decomposition into $\mr{Aut}_{\Delta_n}(U_{\omega n})$-orbits is a refinement of the decomposition into $\mr{Aut}_{\Delta_n}(U_{\omega n,S})$-orbits.
\end{rem}
\subsection{Proof of Theorem~\ref{thm:natural-projections}}\label{sec:natural-projections-are-surective}
We apply the result from the previous sections to prove sujectivity of the projection
\[{\rm pr}_{S \setminus S'} : X_{S}(k,n,\omega)\to X_{S'}(k,n,\omega). \]
First we prove that it is surjective on the level of fixed points.
\begin{prop}\label{prop:surjective-on-fixed-points}
    For $\omega \in \bN$, $k \in [n]$ and $S,S' \subset [n]$ with $S' \subset S$,
\[{\rm pr}_{S \setminus S'} \, {\mathcal Jug}_S(k,n,\omega) = {\mathcal Jug}_{S'}(k,n,\omega). \]
\end{prop}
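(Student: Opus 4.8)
The plan is to prove the two inclusions ${\rm pr}_{S\setminus S'}\,{\mathcal Jug}_S(k,n,\omega)\subseteq{\mathcal Jug}_{S'}(k,n,\omega)$ and ${\mathcal Jug}_{S'}(k,n,\omega)\subseteq{\rm pr}_{S\setminus S'}\,{\mathcal Jug}_S(k,n,\omega)$ separately, after reducing to the case in which $S\setminus S'$ consists of a single vertex. Indeed, as a map on tuples of index sets ${\rm pr}_{S\setminus S'}$ factors as a composition of one-vertex projections ${\rm pr}_{t}$, the intermediate configurations all contain $S'$ (hence are nonempty since $\#S'\ge 1$), and the general statement follows by iterating the one-vertex case. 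So assume $S=\{s_1<\dots<s_r\}$ and $S'=S\setminus\{s_j\}$; reading indices cyclically (with $s_{r+1}=s_1+n$), let $u=s_{j-1}$ and $u'=s_{j+1}$ be the two neighbours of $s_j$ in $S'$, and put $p=s_j-u$, $p'=u'-s_j$ and $q=p+p'=u'-u$, which are positive integers $\le n$.

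For ${\rm pr}_{s_j}\,{\mathcal Jug}_S\subseteq{\mathcal Jug}_{S'}$, delete the $j$-th entry of a tuple $(J_i)\in{\mathcal Jug}_S$; every defining condition of ${\mathcal Jug}_{S'}$ is then inherited except the one from $u$ to $u'$, and that one follows by composing the two conditions of $(J_i)$ at the vertex $s_j$: if $l\in J_{j-1}$ with $l\le\omega n-q$, then $l\le\omega n-p$, so $l+p\in J_j$, and $l+p\le\omega n-p'$, so $l+q\in J_{j+1}$. For the reverse inclusion, start from $(J'_i)\in{\mathcal Jug}_{S'}$ with entries $J^u,J^{u'}$ at the vertices $u,u'$, and build a tuple in ${\mathcal Jug}_S$ projecting to it by inserting at the vertex $s_j$ a single index set $J$ with $\#J=k\omega$, $\tau_p(J^u\cap\bZ_{\le\omega n-p})\subseteq J$ and $\tau_{p'}(J\cap\bZ_{\le\omega n-p'})\subseteq J^{u'}$; this suffices, since the conditions at all other consecutive pairs of vertices are unchanged from $(J'_i)$. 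Writing $A:=\tau_p(J^u\cap\bZ_{\le\omega n-p})\subseteq\{p+1,\dots,\omega n\}$ and $B:=\{\,c-p'\mid c\in J^{u'},\ c>p'\,\}\subseteq\{1,\dots,\omega n-p'\}$, the two required conditions read $A\subseteq J$ and $J\cap\bZ_{\le\omega n-p'}\subseteq B$.

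The heart of the argument is the inclusion $A\cap\bZ_{\le\omega n-p'}\subseteq B$: if $m=l+p\in A$ with $l\in J^u$, $l\le\omega n-p$ and $m\le\omega n-p'$, then $l\le\omega n-q$, so the ${\mathcal Jug}_{S'}$-condition from $u$ to $u'$ gives $l+q\in J^{u'}$, and since $l+q>p'$ we obtain $m=(l+q)-p'\in B$. Granting this, $A\cap B=A\cap\bZ_{\le\omega n-p'}$, and a direct count shows that the positions not occupied by $A$ that may still be added to $J$, namely $(B\setminus A)\sqcup(\{\omega n-p'+1,\dots,\omega n\}\setminus A)$, number $(k\omega-\#A)+\big(p'-\#(J^{u'}\cap\{1,\dots,p'\})\big)\ge k\omega-\#A$, using only $\#(J^{u'}\cap\{1,\dots,p'\})\le p'$; hence $A$ can be enlarged to a set $J$ of size $k\omega$ with $A\subseteq J$ and $J\cap\bZ_{\le\omega n-p'}\subseteq B$, which completes the construction. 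I expect the only real work to lie in this second inclusion — identifying the correct sets $A$ and $B$, deducing $A\cap\bZ_{\le\omega n-p'}\subseteq B$ from the ${\mathcal Jug}_{S'}$-condition, and the bookkeeping that guarantees enough room to complete $A$ to the right size; the reduction step and the first inclusion are purely formal.
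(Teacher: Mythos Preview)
Your proof is correct and follows essentially the same approach as the paper's: both argue that the forced set $A=\tau_p(J^u\cap\bZ_{\le\omega n-p})$ can be completed to a set of size $k\omega$ by adding elements from $\tau_{-p'}(J^{u'}\cap\bZ_{>p'})\cup\{\omega n-p'+1,\dots,\omega n\}$, and both rely on the compatibility $A\cap\bZ_{\le\omega n-p'}\subseteq B$ inherited from the ${\mathcal Jug}_{S'}$-condition. Your presentation is somewhat cleaner in that you make the reduction to the one-vertex case explicit and verify the counting inequality carefully, whereas the paper carries out the iteration inside a single construction and asserts the final containment ``by construction''; but the underlying idea is the same.
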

\begin{proof}
By construction the image of ${\mathcal Jug}_S(k,n,\omega)$ is contained in ${\mathcal Jug}_{S'}(k,n,\omega)$ provided $S' \subset S$. Now we explicitly construct one preimage $\mathcal{J}$ for each $\mathcal{J}' \in {\mathcal Jug}_{S'}(k,n,\omega)$. Let $s_0 \in S$ such that $s_0 \notin S'$ with $s'_t < s_0 <s'_{t+1}$ and $[s'_{t}+1,s_0-1] \cap S = \varnothing$. Below we use $S$ instead of $\bZ_r$ as index set of the juggling pattern $\mathcal{J} = (J_s)_{s\in S}$ to simplify the notation.
For $s \in S \cap S'$ define $J_s:=J_s'$. In position $s_0$ we define
\[ J_{s_0} := \tau_{s_0-s'_t}J'_{{t}} \]
If $\# \tau_{s_0-s'_t}J'_{{t}}  = \omega k$, this is sufficient. Otherwise add the element 
\[ p_0 := \mr{max}\{ p \in [\omega n] \setminus J_{s_0} : p \in ( \tau_{s_0-s'_{t+1}}J'_{{t+1}} \cup [\omega n+s_0-s'_{t+1}+1,\omega n])\}\]
until $J_{s_0}$ has the right cardinality.
Continue the above procedure until there exists no $s_0$ with the above property. By construction, the resulting $\mathcal{J}$ is contained in ${\mathcal Jug}_S(k,n,\omega)$. Clearly, its image under ${\rm pr}_{S \setminus S'}$ is $\mathcal{J'}$.
\end{proof}
\begin{prop}\label{prop:sujection-on-cells}
    For $\omega \in \bN$, $k \in [n]$ and $S,S' \subset [n]$ with $S' \subset S$,
every cell in $ X_{S'}(k,n,\omega)$ is the image of a cell in $X_S(k,n,\omega)$. Namely, for $\mathcal{J} \in {\mathcal Jug}_S(k,n,\omega)$, the attracting set of ${\rm pr}_{S \setminus S'}(p_\mathcal{J})$ in $ X_{S'}(k,n,\omega)$ equals to the image of the attracting set $W_\mathcal{J} \subset X_S(k,n,\omega)$ under the map ${\rm pr}_{S \setminus S'}$.
\end{prop}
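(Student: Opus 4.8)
The plan is to reduce to the removal of a single vertex and then, for each point of a cell of $X_{S'}(k,n,\omega)$, to write down an explicit preimage lying in the corresponding cell of $X_S(k,n,\omega)$. Since ${\rm pr}_{S\setminus S'}$ is a composite of one-vertex projections, and composing maps that send every cell onto a cell again has this property, it suffices to treat the case $S'=S\setminus\{s_a\}$ for a single $a$; throughout, indices lie in $\bZ_r$ and $q_i:=s_{i+1}-s_i$. By $T$-equivariance of the projection and Lemma~\ref{lem:T-fixed-point-parametrisation} one has ${\rm pr}_{S\setminus S'}(p_\mathcal{J})=p_{{\rm pr}_{S\setminus S'}(\mathcal{J})}$ for the $T$-fixed point $p_\mathcal{J}$ attached to $\mathcal{J}\in{\mathcal Jug}_S(k,n,\omega)$, and every cell of $X_{S'}(k,n,\omega)$ equals $W_{{\rm pr}_{S\setminus S'}(\mathcal{J})}$ for some $\mathcal{J}$ by Proposition~\ref{prop:surjective-on-fixed-points}; so the statement reduces to ${\rm pr}_{S\setminus S'}(W_\mathcal{J})=W_{{\rm pr}_{S\setminus S'}(\mathcal{J})}$ for every $\mathcal{J}$. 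The inclusion ``$\subseteq$'' is immediate: ${\rm pr}:={\rm pr}_{S\setminus S'}$ is a $\bC^*$-equivariant morphism of projective varieties, hence it commutes with $\bC^*$-limits ($\lim_{z\to0}z.x$ is the value at $0$ of the extension of $z\mapsto z.x$ to a morphism $\bP^1\to X_S(k,n,\omega)$, which ${\rm pr}$ sends to the corresponding extension for ${\rm pr}(x)$), so $x\in W_\mathcal{J}$ forces ${\rm pr}(x)\in W_{{\rm pr}(\mathcal{J})}$; in fact ${\rm pr}^{-1}(W_{\mathcal{J}'})=\bigsqcup_{{\rm pr}(\mathcal{J})=\mathcal{J}'}W_\mathcal{J}$.

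For ``$\supseteq$'', recall (Lemma~\ref{lem:cell-decomp} and \cite{Pue2020}) that, for the $\bC^*$-action of Section~\ref{sec:torus-action} with $\mr{wt}\,v_t^{(i)}=t$, the cell $W_\mathcal{J}$ is the set of tuples $(V_i)_{i\in\bZ_r}$ in which each $V_i$ has pivot set exactly $J_i$, that is $\dim\big(V_i\cap\langle v^{(i)}_t,\dots,v^{(i)}_{\omega n}\rangle\big)=\#(J_i\cap\bZ_{\ge t})$ for all $t$. Fix $\mathcal{J}=(J_i)$, set $\mathcal{J}'={\rm pr}(\mathcal{J})$, and take $y=(V_i)_{i\neq a}\in W_{\mathcal{J}'}$, so $V_i$ has pivot set $J_i$ for $i\neq a$ and $\mathcal{T}_{q_{a-1}+q_a}(V_{a-1})\subseteq V_{a+1}$. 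Then the fibre ${\rm pr}^{-1}(y)=\{V_a:\dim V_a=k\omega,\ \mathcal{T}_{q_{a-1}}(V_{a-1})\subseteq V_a\subseteq\mathcal{T}_{q_a}^{-1}(V_{a+1})\}$ is a non-empty Grassmannian, since $\mathcal{T}_{q_a}\mathcal{T}_{q_{a-1}}=\mathcal{T}_{q_{a-1}+q_a}$ and $\dim\mathcal{T}_{q_{a-1}}(V_{a-1})\le k\omega\le q_a+\dim\big(V_{a+1}\cap\mr{im}\,\mathcal{T}_{q_a}\big)=\dim\mathcal{T}_{q_a}^{-1}(V_{a+1})$. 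What remains is to select a $V_a$ in this fibre with pivot set exactly $J_a$.

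I would produce such a $V_a$ by prescribing an echelon basis: include the nonzero images $\mathcal{T}_{q_{a-1}}(w^{(a-1)}_s)$, $s\in J_{a-1}$, of the reduced echelon basis of $V_{a-1}$ (their pivots $s+q_{a-1}$ lie in $J_a$ by the juggling inequality for the arrow $s_{a-1}\to s_a$), and for each remaining $t\in J_a$ adjoin a vector $w^{(a)}_t=v^{(a)}_t+(\text{terms at positions}>t)$ for which $\mathcal{T}_{q_a}(w^{(a)}_t)$ is the echelon vector of $V_{a+1}$ with pivot $t+q_a$ when $t\le\omega n-q_a$ (legitimate because then $t+q_a\in J_{a+1}$ by the juggling inequality for $s_a\to s_{a+1}$) and $\mathcal{T}_{q_a}(w^{(a)}_t)=0$ otherwise. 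A direct check then shows that the added terms sit at positions outside $J_a$ (because $u\notin J_{a+1}$ forces $u-q_a\notin J_a$, again by a juggling inequality), that the resulting $k\omega$ vectors have distinct pivots forming exactly $J_a$, hence span a $V_a$ with pivot set $J_a$, and that $\mathcal{T}_{q_{a-1}}(V_{a-1})\subseteq V_a$ and $\mathcal{T}_{q_a}(V_a)\subseteq V_{a+1}$ hold by construction (using $\mathcal{T}_{q_{a-1}+q_a}(V_{a-1})\subseteq V_{a+1}$ for the latter). Consequently $x:=(\dots,V_{a-1},V_a,V_{a+1},\dots)\in X_S(k,n,\omega)$ lies in $W_\mathcal{J}$ with ${\rm pr}(x)=y$; as the recipe is algebraic in $y$, it even yields a section of ${\rm pr}|_{W_\mathcal{J}}$, though only set-theoretic surjectivity is needed.

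The hard part is this last step: checking that the completion $V_a$ just constructed has pivot set \emph{exactly} $J_a$ -- so that $x$ lands in the prescribed cell rather than merely in $X_S(k,n,\omega)$ -- while simultaneously meeting both intertwining conditions $\mathcal{T}_{q_{a-1}}(V_{a-1})\subseteq V_a$ and $\mathcal{T}_{q_a}(V_a)\subseteq V_{a+1}$. This is precisely where the two inequalities defining ${\mathcal Jug}_S(k,n,\omega)$ are used, one controlling the pivots inherited from $V_{a-1}$ and the other the interaction of the newly adjoined vectors with $V_{a+1}$; in other words, the juggling compatibility is exactly what makes the intertwining requirement and the pivot-set requirement solvable at the same time.
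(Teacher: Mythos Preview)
Your argument is correct and follows essentially the same route as the paper's proof. Both establish $\mathrm{pr}_{S\setminus S'}(W_\mathcal{J})\subseteq W_{\mathrm{pr}_{S\setminus S'}(\mathcal{J})}$ from $\bC^*$-equivariance, and then construct an explicit preimage of a generic point by filling in the missing vector space $V_a$ as the span of (i) the shifted echelon vectors $\mathcal{T}_{q_{a-1}}(w^{(a-1)}_s)$ coming from the previous vertex and (ii) lifts of echelon vectors of $V_{a+1}$ (or standard basis vectors in the top range) for the remaining pivots of $J_a$. The paper carries this out in one pass for all missing vertices using the explicit $\mu$-coordinates of \cite[Theorem~4.13]{Pue2020}, while you reduce to removing a single vertex and phrase the verification via pivot sets; the constructions are the same. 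One small remark: your claim that the tail terms of the adjoined $w^{(a)}_t$ lie outside $J_a$ is correct, but you need not (and do not) claim this for the inherited vectors $\mathcal{T}_{q_{a-1}}(w^{(a-1)}_s)$ --- for those the tails can hit $J_a\setminus\tau_{q_{a-1}}J_{a-1}$, yet this is harmless because distinct leading positions equal to $J_a$ already force the pivot set to be $J_a$.
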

\begin{proof}
From \cite[Theorem~4.13]{Pue2020} we obtain that the vector spaces of a point in the cell $W_\mathcal{J}$ in $X_S(k,n,\omega)$ are spanned by vectors of the form 
\begin{equation}\label{eqn:coordinates-in-cells}
    w_j^{(i)} := v_j^{(i)} + \sum_{\ell \in [j+1,\omega n] \setminus J_i} \mu_{j,\ell}^{(i)} v_\ell^{(i)} \quad \mr{for} \ \mr{all} \ i \in \Delta_r, \ j\in J_i
\end{equation}
subject to the equations $\mu_{j+q_i,\ell+q_i}^{(i+1)} = \mu_{j,\ell}^{(i)}$ if $j +q_i \leq \omega n$ and $\ell+q_i \notin J_{i+1}$.
Let $W_\mathcal{J'}$ be a cell in $ X_{S'}(k,n,\omega)$. Now we consider the extension  to a cell $W_\mathcal{J}$ in $X_S(k,n,\omega)$ such that ${\rm pr}_{S \setminus S'} W_\mathcal{J}= W_\mathcal{J'}$.  As for the fixed points take $s_0 \in S$ such that $s_0 \notin S'$ with $s'_t < s_0 <s'_{t+1}$ and $[s'_{t}+1,s_0-1] \cap S = \varnothing$. Again we use $S$ instead of $\bZ_r$ as index for the vertices.
For $s \in S \cap S'$ take the basis vectors $w^{(s)}_j$ with $j \in J_s$ as defined above. Here $\mathcal{J} = (J_s)_{s \in S}$ is obtained from $\mathcal{J}' \in {\mathcal Jug}_{S'}(k,n,\omega)$ as described in Proposition~\ref{prop:surjective-on-fixed-points}. In position $s_0$ we define
\[ w^{(s_0)}_j := \mathcal{T}_{q}w^{(t)}_{j-q}  \]
for $q:=s_0-s'_t$ and $j \in \tau_{q}J'_{{t}}$. For $\# \tau_{q}J'_{s_{t}} = \omega k$ this completes the basis. Otherwise we extend it by the elements
\[ w^{(s_0)}_p := \begin{cases} \mathcal{T}_{s_0-s'_{t+1}}w^{({t+1})}_{p+s'_{t+1}-s_0} & {\rm if} \ p \in \tau_{s_0-s'_{t+1}} J_{t+1},\\ v^{(s_0)}_{p} & {\rm if} \ p \in[\omega n +s_0-s'_{t+1} +1,\omega n] \end{cases}   \]
where $p \in J_i$ is obtained as in the proof of Proposition~\ref{prop:surjective-on-fixed-points}. We repeat this procedure for all remaining $s_0 \in {S \setminus S'}$. 
By construction this is compatible with the describing equations of the cells from \cite[Theorem~4.13]{Pue2020}. Moreover this subset of $W_\mathcal{J}$ is mapped surjectively to $W_\mathcal{J'} \subseteq X_{S'}(k,n,\omega)$. In particular, this is the smallest cell from $ X_{S}(k,n,\omega)$ which surjects to $W_\mathcal{J'}$. For the other cells whose fixed points mapping to the same fixed point, surjectivity is proven in the same way. The $\bC^*$-action commutes with the projection ${\rm pr}_{S \setminus S'}$. Hence the image of any attracting set of a fixed point $p$ has to be contained in the attracting set of ${\rm pr}_{S \setminus S'}(p)$.
\end{proof}
\begin{dfn}\label{dfn:energy-juggling-pattern}
 Let $\omega \in \bN$, $k \in [n]$ and $S \subset [n]$ with $\# S=r$.  The energy of a juggling pattern $\mathcal{J} \in {\mathcal Jug}_S(k,n,\omega)$ is 
 \[ e(\mathcal{J}):=\sum_{i \in \bZ_r} \sum_{j \in J_i \setminus \tau_{q_{i-1}}J_{i-1}} \# [j+1,\omega n] \setminus J_i.\]
\end{dfn}
\begin{cor}\label{cor:cell-dim-via-jugg}
 Let $\omega \in \bN$, $k \in [n]$ and $S \subset [n]$ with $\# S=r$. Then $\dim_\bC W_{\mathcal{J}} = e(\mathcal{J})$ for all $\mathcal{J} \in {\mathcal Jug}_S(k,n,\omega)$.
\end{cor}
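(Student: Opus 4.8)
The plan is to compute $\dim_\bC W_\mathcal{J}$ directly from the explicit parameterization of the cell recalled in the proof of Proposition~\ref{prop:sujection-on-cells}. By \cite[Theorem~4.13]{Pue2020} a general point of $W_\mathcal{J}$ is described by scalars $\mu_{j,\ell}^{(i)}$ for $i\in\bZ_r$, $j\in J_i$, $\ell\in[j+1,\omega n]\setminus J_i$, subject only to the relations $\mu_{j+q_i,\ell+q_i}^{(i+1)}=\mu_{j,\ell}^{(i)}$ whenever $j+q_i\le\omega n$ and $\ell+q_i\notin J_{i+1}$. Since $W_\mathcal{J}$ is an affine space by Lemma~\ref{lem:cell-decomp}, its dimension equals the number of free parameters once these relations are imposed, so the whole task is to count them.

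First I would record two elementary consequences of $\mathcal{J}\in{\mathcal Jug}_S(k,n,\omega)$ that make the relations transparent: if $j\in J_i$ and $j+q_i\le\omega n$ then $j+q_i\in J_{i+1}$, and (contrapositively) if $\ell\le\omega n$ and $\ell\notin J_{i+1}$ then $\ell-q_i\notin J_i$. Using these I would rewrite the relations as follows: a parameter $\mu_{a,b}^{(i+1)}$ (with $a\in J_{i+1}$, $b\in[a+1,\omega n]\setminus J_{i+1}$) is identified with $\mu_{a-q_i,b-q_i}^{(i)}$ exactly when $a\in\tau_{q_i}J_{i-1}$ — sorry, when $a\in\tau_{q_i}J_i$ — and is otherwise unconstrained from vertex $i$; the side conditions attached to the relation in \cite[Theorem~4.13]{Pue2020}, namely $b-q_i\notin J_i$, $a-q_i\ge 1$ and $1\le b-q_i\le\omega n$, are all automatic from the juggling-pattern inequalities.

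Next I would organize all the relations into a graph $G$ whose vertex set is the set of parameters and which has one edge per relation, oriented from $\mu_{a,b}^{(i+1)}$ to its predecessor $\mu_{a-q_i,b-q_i}^{(i)}$. Each parameter is the left-hand side of at most one relation, so it has at most one outgoing edge; and following outgoing edges strictly decreases the lower index, so $G$ has no directed (hence no undirected) cycle and is a disjoint union of rooted trees, the root of each tree being the unique parameter in it with no outgoing edge. By the rewriting above, $\mu_{j,\ell}^{(i)}$ is a root precisely when $j\notin\tau_{q_{i-1}}J_{i-1}$, i.e.\ $j\in J_i\setminus\tau_{q_{i-1}}J_{i-1}$. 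Since imposing equations of the form $\mu_x=\mu_y$ leaves a solution space of dimension equal to the number of connected components of $G$, we conclude
\[ \dim_\bC W_\mathcal{J}=\#\{\text{roots of }G\}=\sum_{i\in\bZ_r}\ \sum_{j\in J_i\setminus\tau_{q_{i-1}}J_{i-1}}\#\big([j+1,\omega n]\setminus J_i\big)=e(\mathcal{J}), \]
the last equality being the definition of the energy (Definition~\ref{dfn:energy-juggling-pattern}).

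The only genuine work is the bookkeeping in the second paragraph: verifying that every side condition in the relations of \cite[Theorem~4.13]{Pue2020} is forced by $\mathcal{J}\in{\mathcal Jug}_S(k,n,\omega)$, so that "having a predecessor" depends only on whether the lower index $j$ lies in $\tau_{q_{i-1}}J_{i-1}$ and not on $\ell$ or on the vertex. Once that is settled, the forest structure and the count are immediate, and the cyclic nature of $\Delta_r$ causes no trouble precisely because the lower index is strictly monotone along each chain, so no chain ever closes up.
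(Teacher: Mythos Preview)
Your argument is correct and follows the same route as the paper, which simply states that the claim ``follows immediately from the explicit coordinate description of the cell $W_\mathcal{J}$ in the proof of Proposition~\ref{prop:sujection-on-cells}.'' You have merely made explicit the bookkeeping that the paper leaves to the reader: that the relations organize the parameters into disjoint chains whose starting points are precisely those $\mu_{j,\ell}^{(i)}$ with $j\in J_i\setminus\tau_{q_{i-1}}J_{i-1}$, whence the number of free parameters equals $e(\mathcal{J})$.
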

\begin{proof}
    This follows immediately from the explicit coordinate description of the cell $W_\mathcal{J} \subset X_S(k,n,\omega)$ in the proof of Proposition~\ref{prop:sujection-on-cells}.
\end{proof}
\begin{rem}
For $S=[n]$ and $\omega = 1$, the elements of ${\mathcal Jug}_S(k,n,\omega)$ describe the different possibilities to juggle with $k$-many balls. Definition~\ref{dfn:energy-juggling-pattern} is linked to the energy which is required by the juggling person to throw this pattern.
\end{rem}
\begin{proof}[Proof of Theorem~\ref{thm:natural-projections}]
The statement follows from Proposition~\ref{prop:surjective-on-fixed-points} in combination with ~\ref{prop:sujection-on-cells}.
\end{proof}

\subsection{Geometric properties}\label{sec:more-geometry}
\begin{prop}\label{prop:dim-preserving-projection}
For $\omega\in \bN$, $k \in [n]$, the projection $\mr{pr}_i$ preserves the dimension of a top-dimensional cell $W_\mathcal{J} \subset X(k,n,\omega)$ with $\mathcal{J} \in {\mathcal Jug}(k,n,\omega)$ if and only if $1 \notin J_i$ or $1,\omega n \in J_i$.
\end{prop}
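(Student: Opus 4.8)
The plan is to work with the explicit coordinate description of a top-dimensional cell $W_\mathcal{J}$ established in the proof of Proposition~\ref{prop:sujection-on-cells}, together with the dimension count provided by Corollary~\ref{cor:cell-dim-via-jugg}. Recall that $\mr{pr}_i$ deletes the vertex $i \in \bZ_n$ and composes the two maps meeting at $i$; on a cell this amounts to forgetting the vector space at vertex $i$, so the only coordinates $\mu_{j,\ell}^{(i)}$ that can be lost are the ones attached to that vertex. The key is therefore to count, among the $\mu_{j,\ell}^{(i')}$ with $i' \in \bZ_n \setminus \{i\}$, how many are free after imposing the relations $\mu_{j+q,\ell+q}^{(i+1)}=\mu_{j,\ell}^{(i)}$, and compare with $e(\mathcal{J})$.

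First I would recall from Theorem~\ref{thm:geometric-properties} (Theorem A) that $X(k,n,\omega)$ is equidimensional of dimension $\omega k(n-k)$, and identify which $\mathcal{J} \in {\mathcal Jug}(k,n,\omega)$ index top-dimensional cells via the energy formula $e(\mathcal{J})=\omega k(n-k)$. Next I would analyze the effect of deleting vertex $i$: the coordinates of a point in $W_\mathcal{J}$ living over vertex $i$ are the $\mu_{j,\ell}^{(i)}$ with $j\in J_i$ and $\ell\in[j+1,\omega n]\setminus J_i$. A coordinate $\mu_{j,\ell}^{(i)}$ survives the projection (i.e. is recorded by a coordinate over the neighbouring vertex $i+1$, via the relation coming from the map along the arrow $i\to i+1$) precisely when $j+q_{\mathrm{shift}}\le \omega n$ and $\ell$ shifts into a legal non-pivot slot at $i+1$; it is genuinely forgotten when $j$ is maximal in the relevant sense, i.e. when the ``top'' index is involved. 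Carefully unwinding the shift $\tau$ in the definition of ${\mathcal Jug}$ and of the relations, one sees that the obstruction to surviving is exactly that $1\in J_i$ while $\omega n\notin J_i$: if $1\notin J_i$ there is no pivot row that gets truncated off the top, and if both $1,\omega n\in J_i$ the truncated data is compensated.

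Concretely, I would show: (a) if $1\notin J_i$, then every $\mu^{(i)}_{j,\ell}$ either equals some $\mu^{(i+1)}_{j+1,\ell+1}$ (after re-indexing by the arrow) or is itself already determined by a coordinate at vertex $i-1$, so no free parameter is lost, hence $\dim \mr{pr}_i(W_\mathcal{J})=\dim W_\mathcal{J}$; (b) if $1\in J_i$ but $\omega n\notin J_i$, then the coordinates $\mu^{(i)}_{1,\ell}$ with $\ell\notin J_i$ (of which there are $\omega n - k\omega \ge 1$ when the cell is top-dimensional) are not recorded anywhere after deletion, so the dimension strictly drops; (c) if $1,\omega n\in J_i$, a short bookkeeping argument shows the seemingly-lost coordinates $\mu^{(i)}_{1,\ell}$ are forced to equal coordinates already present at the other vertices because $\omega n\in J_i$ closes up the cyclic relation, so again no drop. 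Assembling (a)--(c) yields the stated equivalence.

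The main obstacle will be step (c): making precise why having both $1$ and $\omega n$ in $J_i$ makes the relations $\mu^{(i+1)}_{j+q_i,\ell+q_i}=\mu^{(i)}_{j,\ell}$ ``wrap around'' the cyclic quiver so that the coordinates over vertex $i$ are redundant. This requires tracking the indecomposable summands of $U_{\omega n}$ that pass through vertex $i$ and verifying that the length-$\omega n$ intervals close into the cyclic pattern exactly under the condition $1,\omega n\in J_i$; equivalently, one must check that the only indecomposable $\Delta_n$-summand which, restricted away from vertex $i$, loses a dimension is the one whose interval begins at vertex $i$ — and that summand is precisely what the condition $1\in J_i$, $\omega n\notin J_i$ detects. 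I expect the cleanest route is to phrase everything in terms of the intervals $[j,\,j']$ of ones in $J_i$ and how $\mr{pr}_i$ acts on the corresponding indecomposables, rather than manipulating the $\mu$'s directly.
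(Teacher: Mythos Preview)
Your approach is the same as the paper's: use the explicit cell coordinates $\mu_{j,\ell}^{(i)}$ from the proof of Proposition~\ref{prop:sujection-on-cells} and count which free parameters survive the deletion of vertex~$i$. The structure of the argument into the three cases (a)--(c) is also right. However, the mechanism you describe in~(b), and consequently the difficulty you anticipate in~(c), is not quite correct.

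In~(b) you assert that when $1\in J_i$ and $\omega n\notin J_i$ the coordinates $\mu^{(i)}_{1,\ell}$ for $\ell\in[2,\omega n]\setminus J_i$ are ``not recorded anywhere after deletion''. This is false: for every such $\ell<\omega n$ the relation $\mu^{(i+1)}_{2,\ell+1}=\mu^{(i)}_{1,\ell}$ holds (since $\ell\notin J_i$ forces $\ell+1\notin J_{i+1}$ for a top-dimensional pattern), and after deleting vertex~$i$ these $\mu^{(i+1)}_{2,\ell+1}$ become new free coordinates at vertex $i+1$, because $2\in J_{i+1}\setminus\tau_2 J_{i-1}$. The only coordinate that is genuinely lost is $\mu^{(i)}_{1,\omega n}$, which has no partner at vertex $i+1$. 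Hence the dimension drops by exactly~$1$, not by $\omega(n-k)$. The paper phrases this as the inequality $\#[\omega n]\setminus J_i > \#[2,\omega n]\setminus J_{i+1}$, which encodes precisely this single missing slot.

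Once~(b) is corrected, case~(c) becomes trivial rather than the main obstacle: if $\omega n\in J_i$ then there is no coordinate $\mu^{(i)}_{1,\omega n}$ to lose, and the transfer $\mu^{(i)}_{1,\ell}\mapsto\mu^{(i+1)}_{2,\ell+1}$ is a bijection onto the new free coordinates at $i+1$. There is no ``cyclic wrapping'' to track and no need to pass to indecomposable summands; the whole proof is a direct comparison of $\#[2,\omega n]\setminus J_i$ with $\#[3,\omega n]\setminus J_{i+1}$, which are equal precisely when $1\in J_{i+1}$, i.e.\ when $\omega n\in J_i$.
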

\begin{proof}
By \cite[Theorem~2.11]{FLP23}, the top-dimensional cells of $X(k,n,\omega)$ are parameterized by the $k$-element subsets $I$ of $[n]$. The associated juggling pattern is uniquely determined by the condition $1 \in J_i$ for all $i \in I$.
Recall the explicit coordinates for the attracting sets from Equation~\eqref{eqn:coordinates-in-cells}:
\[     w_j^{(i)} := v_j^{(i)} + \sum_{\ell \in [j+1,\omega n] \setminus J_i} \mu_{j,\ell}^{(i)} v_\ell^{(i)} \quad \mr{for} \ \mr{all} \ i \in \Delta_n, \ j\in J_i \]
subject to the equations  $\mu_{j+1,\ell+1}^{(i+1)} = \mu_{j,\ell}^{(i)}$ if $j +1 \leq \omega n$ and $\ell+1 \notin J_{i+1}$. By assumption $W_\mathcal{J} \subset X(k,n,\omega)$ is top-dimensional. This implies that $1\in J_i$ for some $i \in \bZ_n$. If $\omega n \notin J_i$, this implies that $\# [\omega n] \setminus J_i > \#[2,\omega n] \setminus J_{i+1}$. Hence $\mr{pr}_{i_0}$ preserves the dimension of $W_\mathcal{J}$ if and only if $1 \notin J_{i_0}$ or $1,\omega n \in J_{i_0}$, because this covers all cases where the number of independent coefficients $ \mu_{j,\ell}^{(i)}$ is preserved by the projection $\mr{pr}_{i_0}$.
\end{proof}
\begin{prop}
Let $\omega \in \bN$, $k \in [n]$ and $T \subset [n]$ with $\# T=q$. For any  permutation $\sigma \in \Sigma_q$   
\[ {\rm pr}_{t_1} \circ \hdots \circ {\rm pr}_{t_q}(U) = {\rm pr}_{t_{\sigma(1)}} \circ \hdots \circ {\rm pr}_{t_{\sigma(q)}}(U) \]
for all $U \in X(k,n,\omega)$. 
\end{prop}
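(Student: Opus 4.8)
The plan is to show that the projection maps $\mathrm{pr}_j$ commute with one another as operators on subrepresentations, which will immediately give the symmetry statement. First I would reduce to the case $q=2$: since $\Sigma_q$ is generated by adjacent transpositions, it suffices to prove that $\mathrm{pr}_a\circ\mathrm{pr}_b = \mathrm{pr}_b\circ\mathrm{pr}_a$ for any two distinct indices $a,b\in[n]$. Once this is known for pairs, the general commutation follows by the usual reduction-word argument for the symmetric group, applied to the tuple $(t_1,\dots,t_q)$ (all the $t_i$ are distinct, so any rearrangement is reached by a sequence of adjacent swaps).

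For the pair case the key observation is that $\mathrm{pr}_j$ acts on a $\Delta_n$-representation purely by \emph{deleting the vertex $j$ and composing the two adjacent arrows}, as spelled out in the Remark following the definition of $\mathrm{pr}_T$: an object $M$ is sent to the representation with spaces $M^{(i)}$ for $i\neq j$ and with the arrow out of the predecessor of $j$ replaced by $M_j\circ M_{j-1}$ (all other arrows untouched). For a subrepresentation $U\subseteq M$, the induced subrepresentation $\mathrm{pr}_j(U)$ is simply $U$ with its $j$-th component forgotten; the subrepresentation condition at the new composite arrow holds automatically because it held at both arrows of $M$. Now if $a\neq b$, deleting $a$ and then $b$ produces a representation on the vertex set $[n]\setminus\{a,b\}$ whose arrows are obtained by composing along every maximal run of deleted vertices — and this composite depends only on the \emph{set} $\{a,b\}$, not on the order of deletion. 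Concretely, one checks two cases: if $a$ and $b$ are non-adjacent in $\Delta_n$, the two deletions affect disjoint pairs of arrows and trivially commute; if $b = a\pm1$, then deleting both collapses the three arrows through $\{a,b\}$ into the single composite $M_{a+1}\circ M_a\circ M_{a-1}$ (up to reindexing), independently of which of $a,b$ is removed first. In both cases the resulting $\Delta_{n-2}$-representation, and hence the forgetful image of $U$, is literally the same.

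The main obstacle — really the only thing that needs care — is bookkeeping the vertex relabeling: after deleting one vertex the remaining vertices of $\Delta_{n-1}$ get reindexed, so the ``second'' projection $\mathrm{pr}_b$ must be interpreted with respect to the relabeled quiver, and one has to confirm that the two relabelings (delete-$a$-then-$b$ versus delete-$b$-then-$a$) land on the same canonical identification of the underlying set $[n]\setminus\{a,b\}$ with $[n-2]=\bZ_{n-2}$. This is the content of making ${\rm pr}_T := {\rm pr}_{t_1}\circ\cdots\circ{\rm pr}_{t_q}$ well defined in the first place, and it is best handled by working throughout with the ambient index set $[n]$ (as is already done in the proofs of Propositions~\ref{prop:surjective-on-fixed-points} and~\ref{prop:sujection-on-cells}) rather than with the abstract $\bZ_r$, so that no relabeling is ever performed until the very end. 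With that convention in place the identity $\mathrm{pr}_a\circ\mathrm{pr}_b(U)=\mathrm{pr}_b\circ\mathrm{pr}_a(U)$ is immediate from the description of the composed arrows, and the proposition follows.
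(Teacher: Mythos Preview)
Your argument is correct and follows the same overall strategy as the paper: reduce to commuting a single pair of projections and then induct. The difference lies in how the pair case is justified. The paper invokes the cellular decomposition (Lemma~\ref{lem:cell-decomp}) to write every point of $X(k,n,\omega)$ in explicit cell coordinates, then observes that $\mathrm{pr}_t$ simply drops the $t$-th component of the tuple, so pairwise commutation is immediate and the induction proceeds. You instead argue directly from the functorial description of $\mathrm{pr}_j$ given in the Remark after its definition (delete vertex, compose the two adjacent arrows), checking the adjacent and non-adjacent cases by hand and flagging the relabeling issue explicitly.

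Your route is more elementary and self-contained: it does not rely on the cell structure or on Theorem~\ref{thm:natural-projections}, and it makes clear that the commutation holds already at the level of the ambient product of Grassmannians (indeed at the level of the functor on $\mathrm{rep}_\bC\Delta_n$). The paper's route leans on machinery already in place, which keeps the proof short in context but is heavier than strictly necessary for this statement. Both are valid; your treatment of the relabeling bookkeeping by working throughout with the ambient index set $[n]$ is exactly the right way to make the argument clean.
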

\begin{proof}
We use the cellular decomposition into attracting sets of fixed points as described in Lemma~\ref{lem:cell-decomp}. Then every point has coordinates
\[ U = \Big( U^{(i)} = \mr{span}\big\{ w_j^{(i)} : j \in J_i \big\}\Big)_{i \in \bZ_n}\]
for some $\mathcal{J} \in {\mathcal Jug}(k,n,\omega)$. The projection $\mr{pr}_t$ sends $U$ to 
\[ U = \Big( U^{(i)} = \mr{span}\big\{ w_j^{(i)} : j \in J_i \big\}\Big)_{i \in \bZ_n\setminus\{t\}}.\]
By Theorem~\ref{thm:natural-projections} we know that $\mr{Im} \, \mr{pr}_t = X_{[n]\setminus\{t\}}(k,n,\omega)$. In particular, $\mr{pr}_t(U) \in W_{\mr{pr}_t\mathcal{J}}$. On the cells of $X_{[n]\setminus\{t\}}(k,n,\omega)$, the projections are defined in the same way as above. Hence $\mr{pr}_i \circ \mr{pr}_j U = \mr{pr}_j \circ \mr{pr}_i U$ for all $i,j\in \bZ_n$ and the claim follows by induction.
\end{proof}
\begin{lem}\label{lem:parametrization-of-irreducible-components}
 Let $\omega \in \bN$, $k \in [n]$ and $S \subset [n]$ with $\# S=r$. The irreducible components of $X_S(k,n,\omega)$ are parameterized by the set
 \[ \mathcal{I}rr_S(k,n) := \Big\{ I \in \binom{[n]}{k} : i \in I \ \mr{and} \ i \notin S \ \mr{implies} \ i+1 \in I \Big\}. \]
The closed stratum $\overline{\mathcal{S}_{U_I}}$ of the $\Delta_r$-representation
\[ U_I := \mr{pr}_{[n]\setminus S} \Big( \bigoplus_{i \in I}U_i(\omega n) \Big) \quad \mr{for} \  I \in \mathcal{I}rr_S(k,n)\]
is an irreducible component and all irreducible components are of this form.
\end{lem}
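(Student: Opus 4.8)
The plan is to reduce everything to the maximal case $S=[n]$ (where the statement is \cite[Theorem~2.11]{FLP23}, combined with the fact that the irreducible components of a quiver Grassmannian coincide with the closures of the maximal-dimensional strata \cite[Lemma~2.4]{CFR12} since $U_{\omega n}$ is injective) and then transport the description along the surjective projection $\mr{pr}_{[n]\setminus S}$. Recall from Proposition~\ref{prop:dim-preserving-projection} that $\mr{pr}_i$ preserves the dimension of a top-dimensional cell $W_\mathcal J\subset X(k,n,\omega)$ if and only if $1\notin J_i$ or $1,\omega n\in J_i$; translating through the parametrization of top cells by $k$-subsets $I\in\binom{[n]}{k}$ (with $1\in J_i\iff i\in I$), the condition $1,\omega n\in J_i$ is exactly $i\in I$ and $i+1\in I$. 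Hence composing the $\mr{pr}_j$ for $j\in[n]\setminus S$ preserves the dimension of the top cell attached to $I$ precisely when $i\in I\setminus S$ forces $i+1\in I$, i.e. when $I\in\mathcal Irr_S(k,n)$.

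First I would establish equidimensionality: by Theorem~\ref{thm:natural-projections} the map $\mr{pr}_{[n]\setminus S}$ is surjective, and by Proposition~\ref{prop:sujection-on-cells} every cell of $X_S(k,n,\omega)$ is the image of a cell of $X(k,n,\omega)$; a cell-dimension count via Corollary~\ref{cor:cell-dim-via-jugg} (or a direct argument that the projection is dimension-nonincreasing on cells and is an isomorphism onto its image exactly on the cells surviving the condition above) shows that the top-dimensional cells of $X_S(k,n,\omega)$, of dimension $\omega k(n-k)$, are exactly the images of the top cells attached to $I\in\mathcal Irr_S(k,n)$, and that these images are in bijection with $\mathcal Irr_S(k,n)$. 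Next I would identify each such top cell of $X_S(k,n,\omega)$ with the open stratum $\mathcal S_{U_I}$: applying the functor $\mr{pr}_{[n]\setminus S}$ to the $\Delta_n$-representation $\bigoplus_{i\in I}U_i(\omega n)$ (the isoclass of the generic point of the top cell for $I$ in $X(k,n,\omega)$, by \cite{FLP23}) yields $U_I$, and since the projection maps are $T$-equivariant and $\mr{pr}_{[n]\setminus S}$ restricted to the relevant top cell is an isomorphism onto its image, the image of that cell is an irreducible locally closed subset whose generic point has isoclass $U_I$; because $U_{\omega n,S}$ is injective, strata are $\mr{Aut}_{\Delta_r}(U_{\omega n,S})$-orbits (as in the proof of Theorem~\ref{trm:cells-are-strata}), so this image lies in $\mathcal S_{U_I}$ and, being of dimension $\omega k(n-k)$, is dense in $\overline{\mathcal S_{U_I}}$.

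Finally I would check that the $\overline{\mathcal S_{U_I}}$ for $I\in\mathcal Irr_S(k,n)$ are precisely the irreducible components: each is closed (stratum closures are closed), irreducible (strata are irreducible by \cite[Lemma~2.4]{CFR12}), and of the maximal dimension $\omega k(n-k)$, hence maximal among irreducible closed subsets; conversely every irreducible component, having dimension $\omega k(n-k)$ by the equidimensionality just shown, contains a top-dimensional cell, which lies in one of these stratum closures, forcing the component to equal it. The distinctness of the $\overline{\mathcal S_{U_I}}$ for distinct $I$ follows because they contain distinct $T$-fixed points (the fixed point $p_{\mathcal J}$ with $1\in J_i\iff i\in I$ survives under $\mr{pr}_{[n]\setminus S}$ to a fixed point lying only in $\overline{\mathcal S_{U_I}}$), or alternatively because the representations $U_I$ are pairwise non-isomorphic.

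The main obstacle I expect is the bookkeeping in the second step: verifying carefully that $\mr{pr}_{[n]\setminus S}$ really does restrict to an isomorphism from the top cell of $X(k,n,\omega)$ attached to $I\in\mathcal Irr_S(k,n)$ onto an open dense subset of $\overline{\mathcal S_{U_I}}$, and that the images of top cells attached to $I\notin\mathcal Irr_S(k,n)$ genuinely drop dimension (so they cannot produce new components) — this is where Proposition~\ref{prop:dim-preserving-projection} and the compatibility of the cell coordinates in Equation~\eqref{eqn:coordinates-in-cells} with the projection, as spelled out in the proof of Proposition~\ref{prop:sujection-on-cells}, have to be combined with the injectivity of $U_{\omega n,S}$ to pin down isomorphism classes rather than just dimensions.
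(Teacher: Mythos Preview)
Your overall strategy---reducing to $S=[n]$ via \cite[Theorem~2.11]{FLP23} and transporting along the surjection $\mr{pr}_{[n]\setminus S}$---matches the paper's, and your identification of the cells of dimension $\omega k(n-k)$ in $X_S(k,n,\omega)$ via Proposition~\ref{prop:dim-preserving-projection} is correct. However, there is a genuine gap in the step ``no other irreducible components''. You argue that the images of top cells attached to $I\notin\mathcal{I}rr_S(k,n)$ ``genuinely drop dimension (so they cannot produce new components)'', and you then invoke ``the equidimensionality just shown''. But a dimension drop alone does not preclude lower-dimensional irreducible components: a priori $X_S(k,n,\omega)$ might fail to be equidimensional, and in the paper equidimensionality (Corollary~\ref{cor:equidimensional-irreducible-components}) is \emph{derived from} the present lemma, not the other way around. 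What you actually need is that for every $I\notin\mathcal{I}rr_S(k,n)$ the projected cell lands in some $\overline{\mathcal S_{U_{I'}}}$ with $I'\in\mathcal{I}rr_S(k,n)$; knowing only that it has smaller dimension is not enough.

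The paper closes precisely this gap, but with the tools of Section~\ref{sec:moment-graph+Cohomology} rather than a pure dimension count. Given $I\notin\mathcal{I}rr_S(k,n)$, one picks $i\in I$ with $i\notin S$ and $i+1\notin I$, sets $I':=(I\setminus\{i\})\cup\{i+1\}$, and observes that by the moment-graph description (Lemma~\ref{lem:moment-graph}) there is an oriented edge from the juggling pattern of $I'$ to that of $I$; the poset isomorphism (Theorem~\ref{thm:poset-iso}) then gives $W_{\mathcal J}\subset\overline{W_{\mathcal J'}}$ inside $X_S(k,n,\omega)$. Iterating this shift terminates in some $I'\in\mathcal{I}rr_S(k,n)$, so every projected top cell is absorbed into one of the claimed components. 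Your dimension-based outline is not misguided, but it cannot be completed without such a closure-inclusion argument---which is exactly why the paper postpones the proof of the lemma until after the poset structure is established.
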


The proof of this statement is postponed until Section~\ref{sec:Poset-structures} because it requires a switch between the different combinatorial models for the cell structure of $X_S(k,n,\omega)$. Observe that the set $\mathcal{I}rr_S(k,n)$ does not depend on $\omega$.
\begin{cor}\label{cor:equidimensional-irreducible-components}
The irreducible components of $X_S(k,n,\omega)$ are equidimensional.
\end{cor}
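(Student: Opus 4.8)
The plan is to deduce equidimensionality of the irreducible components directly from Lemma~\ref{lem:parametrization-of-irreducible-components} together with the dimension count already available for the ambient variety. First I would invoke Theorem~\ref{thm:geometric-properties} (equivalently, the combination of Lemma~\ref{lem:cell-decomp} and the cellular decomposition machinery): the variety $X_S(k,n,\omega)$ carries a cellular $\bC^*$-decomposition, so each irreducible component is a union of cells and the dimension of a component equals the maximal dimension of a cell contained in it. By Lemma~\ref{lem:parametrization-of-irreducible-components} the irreducible components are the closed strata $\overline{\mathcal{S}_{U_I}}$ for $I\in\mathcal{I}rr_S(k,n)$, and each such component contains a distinguished dense cell, namely the attracting set $W_{\mathcal{J}_I}$ of the fixed point corresponding to $I$ (the juggling pattern with $1\in J_i$ for $i\in I$, as in the proof of Proposition~\ref{prop:dim-preserving-projection}).

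The key computational step is then to show that $\dim_\bC W_{\mathcal{J}_I} = \omega k(n-k)$ for every $I\in\mathcal{I}rr_S(k,n)$, independently of $I$. For $S=[n]$ this is exactly the content of \cite[Theorem~2.11]{FLP23}: the top-dimensional cells of $X(k,n,\omega)$ are parameterized by $\binom{[n]}{k}$ and all have dimension $\omega k(n-k)$. For general $S$, I would use the surjective projection $\mr{pr}_{[n]\setminus S}\colon X(k,n,\omega)\to X_S(k,n,\omega)$ from Theorem~\ref{thm:natural-projections}: by Proposition~\ref{prop:sujection-on-cells} this map sends cells onto cells, and by Proposition~\ref{prop:dim-preserving-projection} the projection preserves the dimension of a top-dimensional cell $W_{\mathcal{J}}$ in precisely the case relevant here, because the defining condition on $\mathcal{I}rr_S(k,n)$ — that $i\in I\setminus S$ forces $i+1\in I$ — is exactly what guarantees that for each $t\in[n]\setminus S$ one has $1\notin J_t$ or $1,\omega n\in J_t$ in the juggling pattern $\mathcal{J}_I$. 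Hence the top-dimensional cell $W_{\mathcal{J}_I}\subset X(k,n,\omega)$ of dimension $\omega k(n-k)$ projects to a cell of the same dimension inside $\overline{\mathcal{S}_{U_I}}\subset X_S(k,n,\omega)$, which forces $\dim \overline{\mathcal{S}_{U_I}}\ge \omega k(n-k)$; combined with $\dim X_S(k,n,\omega)\le \omega k(n-k)$ (which in turn follows since $X_S$ is the image of $X$ under a morphism and $\dim X=\omega k(n-k)$) we get equality for every component.

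The main obstacle I anticipate is the careful bookkeeping needed to translate the condition defining $\mathcal{I}rr_S(k,n)$ into the cell-dimension-preservation criterion of Proposition~\ref{prop:dim-preserving-projection}, iterated over all vertices $t\in[n]\setminus S$ simultaneously rather than one at a time. One must check that after projecting away some vertices the remaining juggling pattern still satisfies the hypothesis of Proposition~\ref{prop:dim-preserving-projection} at the next vertex to be removed; this is where the switch between the combinatorial models $\mathcal{I}rr_S(k,n)$ and $\mathcal{Jug}_S(k,n,\omega)$ — the same switch that forces Lemma~\ref{lem:parametrization-of-irreducible-components} to be proved later in Section~\ref{sec:Poset-structures} — is genuinely used. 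A clean alternative that sidesteps some of this, and which I would mention as a remark, is purely homological: since $U_{\omega n,S}$ is an injective $\Delta_r$-representation (used already in the proof of Theorem~\ref{trm:cells-are-strata}), the codimension of each stratum closure is governed by $\dim\mathrm{Ext}^1$ between the generic subrepresentation and its complement, and one checks this Euler-form quantity equals $\omega k(n-k)$ for each $U_I$; but the cellular argument above is the more self-contained route given what has been established in the excerpt.
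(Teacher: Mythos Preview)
Your proposal is correct and follows essentially the same approach as the paper: invoke Lemma~\ref{lem:parametrization-of-irreducible-components} together with Proposition~\ref{prop:dim-preserving-projection} to see that each irreducible component is the closure of a top-dimensional cell of $X(k,n,\omega)$ whose dimension $\omega k(n-k)$ is preserved under $\mr{pr}_{[n]\setminus S}$. The paper's two-line proof is terser and does not spell out the iteration concern over $[n]\setminus S$ or the homological alternative you mention at the end; one small caution is that your opening reference to Theorem~\ref{thm:geometric-properties} would be circular (that theorem cites this corollary for its part~(iii)), so stick with Lemma~\ref{lem:cell-decomp} as you already note.
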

\begin{proof}
By Proposition~\ref{prop:dim-preserving-projection}, $\mathcal{I}rr_S(k,n)$ from Lemma~\ref{lem:parametrization-of-irreducible-components} contains exactly the index sets parametrizing the top-dimensional cells of $X(k,n,\omega)$ whose dimension is preserved by the projection $\mr{pr}_{[n]\setminus S}$. Lemma~\ref{lem:parametrization-of-irreducible-components} also implies that there are no other irreducible components.
\end{proof}
\begin{thm}\label{thm:geometric-properties}
For $\omega\in \bN$, $k \in [n]$ and $S \subset [n]$ the variety $X_S(k,n,\omega)$ satisfies the following:
\begin{enumerate}[label=(\roman*)]
    \item it is a projective variety of dimension $\omega k(n-k)$;
    \item the BB-decomposition is a cellular decomposition;    
    \item its irreducible components are equidimensional;
    \item the irreducible components are normal, Cohen-Macaulay and have rational singularities;
    \item the irreducible components are parameterized by the set
 \[ \mathcal{I}rr_S(k,n) = \Big\{ I \in \binom{[n]}{k} : i \in I \ \mr{and} \ i \notin S \ \mr{implies} \ i+1 \in I \Big\}. \]
\end{enumerate}
\end{thm}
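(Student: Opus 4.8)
The plan is to obtain the five assertions by assembling results already proved in the text, the only external ingredient being classical facts about affine Schubert varieties; I would treat the items one by one.

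Assertions (ii), (iii) and (v) are immediate: (ii) is Lemma~\ref{lem:cell-decomp}, (iii) is Corollary~\ref{cor:equidimensional-irreducible-components}, and (v) is Lemma~\ref{lem:parametrization-of-irreducible-components}. For (i), projectivity is clear since $X_S(k,n,\omega)$ is a quiver Grassmannian, hence a closed subvariety of $\prod_{i\in\bZ_r}\mr{Gr}_{k\omega}(\bC^{\omega n})$. For the dimension, note that $\dim X_S(k,n,\omega)$ is the common dimension of its irreducible components. By Lemma~\ref{lem:parametrization-of-irreducible-components} these are indexed by $\mathcal{I}rr_S(k,n)$, which is nonempty: for any $s\in S$ the set $\{s,s-1,\dots,s-k+1\}$ of residues modulo $n$ satisfies the defining condition. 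By the proof of Corollary~\ref{cor:equidimensional-irreducible-components}, each such component is the image under $\mr{pr}_{[n]\setminus S}$ of a top-dimensional cell of $X(k,n,\omega)$, whose dimension is $\omega k(n-k)$ (the case $S=[n]$, see \cite{FLP23}) and is preserved by $\mr{pr}_{[n]\setminus S}$ applied to the vertices of $[n]\setminus S$ one at a time (Proposition~\ref{prop:dim-preserving-projection}). Hence $\dim X_S(k,n,\omega)=\omega k(n-k)$.

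For (iv) I would use the realisation, established at the end of Section~\ref{QGandLM}, of $X_S(k,n,\omega)$ as a union of affine Schubert subvarieties of the partial affine flag variety $\widehat{SL}_n/P_S$. Affine Schubert varieties are irreducible and closed, so the irreducible components of such a union are exactly its maximal members; in particular every irreducible component of $X_S(k,n,\omega)$ is itself an affine Schubert variety in $\widehat{SL}_n/P_S$. Over $\bC$ such varieties are normal, Cohen-Macaulay and have rational singularities (see \cite{Kum02}; normality and Cohen-Macaulayness also follow from \cite{HR23} in much greater generality), which is exactly (iv).

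I expect the theorem itself to be essentially bookkeeping; the substantive work sits upstream. The main obstacle is really Lemma~\ref{lem:parametrization-of-irreducible-components} (whose proof, reconciling the juggling-pattern description of the cells with the representation-theoretic one, is postponed) together with the identification of $X_S(k,n,\omega)$ with a union of affine Schubert varieties and the cited geometric properties of Schubert varieties. Should one want a proof of (iv) not quoting the Schubert literature, the hard point would be rational singularities of the closed stratum $\overline{\mathcal{S}_{U_I}}$; one could instead derive it from the explicit desingularisation of Section~\ref{sec:desingularization} via a Grauert-Riemenschneider-type vanishing, at the cost of reversing the logical order of the paper.
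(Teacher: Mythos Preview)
Your proposal is correct, and items (ii), (iii), (v) are handled exactly as in the paper. For (i) and (iv) you take valid but different routes.

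For (i), the paper uses a cleaner sandwich argument: by Theorem~\ref{thm:natural-projections} there are surjections $X(k,n,\omega)\twoheadrightarrow X_S(k,n,\omega)\twoheadrightarrow X_{\{i\}}(k,n,\omega)$, and both the source (\cite[Theorem~2.11]{FLP23}) and the target (\cite[Lemma~4.9]{Pue2020}, the loop-quiver case) have dimension $\omega k(n-k)$, so the middle term does too. This avoids your forward reference to the component parametrisation and to the dimension-preservation statement, and in particular establishes (i) independently of (iii) and (v).

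For (iv), the paper does not go through the Schubert-variety identification at all; it simply invokes \cite[Lemma~4.12]{Pue2020}, which gives normality, Cohen--Macaulayness and rational singularities directly for this class of quiver Grassmannians. Your argument via the embedding into $\widehat{SL}_n/P_S$ and the classical results on affine Schubert varieties is perfectly sound, but it imports more machinery and relies on the lemma at the end of Section~\ref{QGandLM} (hence on Theorem~\ref{thm:natural-projections}); the paper's citation keeps the proof self-contained within the quiver-Grassmannian framework.
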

\begin{proof}
From Theorem~\ref{thm:natural-projections} it follows that $X(k,n,\omega)$ surjects on $X_S(k,n,\omega)$ which surjects on $X_{\{i\} }(k,n,\omega)$. By \cite[Theorem~2.11]{FLP23} we obtain that $X(k,n,\omega)$ is of dimension $\omega k(n-k)$. $X_{\{i\}}(k,n,\omega)$ is a quiver Grassmannian for the loop quiver and in \cite[Lemma~4.9]{Pue2020} its dimension is computed as $\omega k(n-k)$. Hence $X_S(k,n,\omega)$ is of the same dimension. Part (ii) is proven in Lemma~\ref{lem:cell-decomp}. The third part is proven in Corollary~\ref{cor:equidimensional-irreducible-components}. \cite[Lemma~4.12]{Pue2020} implies item (iv). The last part is shown in Lemma~\ref{lem:parametrization-of-irreducible-components}.  
\end{proof}

\section{Moment graph and poset structure}\label{sec:moment-graph+Cohomology}
\subsection{Moment Graph}\label{sec:moment-graph}
In this section we provide a combinatorial description for the moment graph of $X_S(k,n,\omega)$. More detail can be found in \cite{GKM1998,LaPu2020,LaPu2021}.
This graph captures the structure of one-dimensional orbits between fixed points for suitable torus actions on complex projective varieties. This helps to understand the equivariant geometry of the variety. 

Let a torus $T$ act on a complex projective algebraic variety $X$ with finitely many fixed points and finitely many one-dimensional $T$ orbits (i.e. the action is skeletal). The definition below is specialized to our settings.
\begin{dfn}Let $T$ be an algebraic torus and let $X$ be a complex projective algebraic $T$-variety. Assume that $X$ admits a skeletal $T$-action and a $T$-stable cellular decomposition where every cell has exactly one fixed point. The corresponding moment graph $\mathcal{G}(X,T)$ is given by 
\begin{itemize}
    \item the vertex set is the fixed point set: $\mathcal{V}=X^T$;
    \item there is an edge $x\to y$ if and only if $x$ and $y$ belong to the same one dimensional $T$ orbit closure $\overline{\mathcal{O}_{x\to y}}$ and $y$ belongs to the closure of the cell containing $x$;
    \item the label of the edge $x\to y$ is the character $\alpha\in {\rm Hom}(T,\bC^*)$ for the torus action on $\mathcal{O}_{x\to y}$. 
\end{itemize}
\end{dfn}

The edge labels are well defined up to a sign, but since this does not play any role in the applications (e.g. computation of equivariant cohomology), we assume the labels to be fixed once and for all, and forget about this ambiguity.

\begin{prop}\label{prop:bijection_jug-cells}
For $\omega\in \bN$, $k \in [n]$ and $S \subset [n]$ with $\#S =r$, there is a bijection between ${\mathcal Jug}_S(k,n,\omega)$ and the set ${\mathcal C}_S(k,n,\omega)$ defined as
\[ \Big\{ (\ell_j)_{j \in \bZ_n} \in [0,\omega r]^{\bZ_n} :  \bdim \,  \Big(\bigoplus_{j \in \bZ_n} U({\underline{j}};\ell_j) \Big) = (k\omega, \dots, k\omega) \in \bN^{\bZ_r} \Big\}, \]
with 
$\underline{j} := \max\{ i \in \bZ_r : s_i \leq j < s_{i+1} \ \ {\mod n} \}$.
\end{prop}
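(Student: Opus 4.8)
The plan is to construct the bijection explicitly in both directions and to check that the two constructions are mutually inverse. First I would set up the combinatorial dictionary. Recall that a juggling pattern $\mathcal{J} = (J_i)_{i \in \bZ_r} \in {\mathcal Jug}_S(k,n,\omega)$ is, by Lemma~\ref{lem:T-fixed-point-parametrisation}, exactly the data of a $T$-fixed point of $X_S(k,n,\omega)$, i.e. a $\Delta_r$-subrepresentation $P_\mathcal{J} \subset U_{\omega n,S}$ which is a coordinate subspace at each vertex, spanned by $\{v^{(i)}_j : j \in J_i\}$. On the other hand, a tuple $(\ell_j)_{j\in\bZ_n} \in {\mathcal C}_S(k,n,\omega)$ encodes a nilpotent $\Delta_r$-representation $\bigoplus_{j\in\bZ_n} U(\underline{j};\ell_j)$ whose dimension vector is forced to be $(k\omega,\dots,k\omega)$. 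Since $U_{\omega n,S}$ is injective in $\mr{rep}_\bC^{\omega r}\Delta_r$ and decomposes as $\bigoplus_{i\in\bZ_r} U(i;\omega r)\otimes\bC^{q_i}$ (the Proposition preceding Remark~\ref{nilp}), every subrepresentation of $U_{\omega n,S}$, in particular the coordinate subrepresentation $P_\mathcal{J}$, lies in $\mr{rep}_\bC^{\omega r}\Delta_r$, hence is a direct sum of the indecomposables $U(j;\ell)$ with $\ell\le \omega r$.

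The core step is therefore: the isomorphism type of the coordinate subrepresentation $P_\mathcal{J}$ is determined by a tuple $(\ell_j)_{j\in\bZ_n}\in[0,\omega r]^{\bZ_n}$, and this assignment $\mathcal{J}\mapsto (\ell_j)_j$ is a bijection onto ${\mathcal C}_S(k,n,\omega)$. To see how to read off the $\ell_j$'s, I would analyze the structure of a coordinate subrepresentation of the explicit model $U_{\omega n,S}$. Using the formula \eqref{eq:linear-maps} for the maps $M_i$ as shift-type matrices, a basis vector $v^{(i)}_j\in J_i$ either maps to another basis vector $v^{(i+1)}_{j+q_i}$ which again lies in $J_{i+1}$ (continuing the indecomposable summand), or it is "at the top" — i.e. $j+q_i > \omega n$ or $j+q_i\notin J_{i+1}$ — which is where an indecomposable summand $U(\underline{\cdot}\,;\ell)$ terminates; dually one detects where a summand starts. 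The value $\underline{j}$ in the statement, $\max\{i\in\bZ_r : s_i\le j < s_{i+1} \bmod n\}$, is precisely the vertex of $\Delta_r$ over which the "$j$-th thread" of the $A_\infty$-picture sits, so that counting, for each $j\in\bZ_n$, the length $\ell_j$ of the indecomposable summand terminating in the appropriate slot gives a well-defined map; summing the $\ell_j$ over all $j\in\bZ_n$ recovers the dimension vector $(k\omega,\dots,k\omega)$ because $|J_i| = k\omega$ for every $i$, which also shows the image lands in ${\mathcal C}_S(k,n,\omega)$.

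For the inverse map, given $(\ell_j)_j\in{\mathcal C}_S(k,n,\omega)$, I would build the subrepresentation $\bigoplus_{j\in\bZ_n}U(\underline{j};\ell_j)$ and embed it into $U_{\omega n,S}$ as a coordinate subrepresentation in the canonical way (each indecomposable $U(\underline{j};\ell_j)$ maps onto a string of standard basis vectors starting at the appropriate position, exactly as in the proof of the decomposition $U_{\omega n,S}\cong\bigoplus U(i;\omega r)\otimes\bC^{q_i}$), then read off the index sets $J_i$; the defining shift conditions of ${\mathcal Jug}_S(k,n,\omega)$ hold automatically because the embedded representation is a genuine subrepresentation. Checking that the two assignments are mutually inverse is then a bookkeeping matter, tracking which basis vectors belong to which indecomposable string. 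The main obstacle I anticipate is purely notational: keeping the indexing straight across the three models — the $\bZ$-indexed $A_\infty$-threads, the $\bZ_n$-index on the $\ell_j$'s, and the $\bZ_r$-index on the $J_i$'s and on the $q_i$-sized blocks — and in particular verifying carefully that $\underline{j}$ is exactly the vertex at which the relevant indecomposable summand lives, so that the dimension-vector constraint translates correctly. Once the dictionary between "strings of consecutive basis vectors in the coordinate subrepresentation" and "indecomposable summands $U(j;\ell)$" is pinned down, both directions and their mutual inverseness follow.
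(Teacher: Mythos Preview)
Your plan is sound and would yield a correct proof, but it takes a genuinely different route from the paper's.

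The paper does not build the bijection directly. Instead it invokes \cite[Proposition~3.2]{FLP23}, which already gives the bijection in the case $S=[n]$ between ${\mathcal Jug}_{[n]}(k,n,\omega)$ and the corresponding set of length tuples, and then applies the projection $\mr{pr}_{[n]\setminus S}$ together with Theorem~\ref{thm:natural-projections} to push this parametrisation down to general $S$. The only computation is to observe that $\mr{pr}_{[n]\setminus S}$ sends the indecomposable $U(j;\ell)$ over $\Delta_n$ to an indecomposable $U(\underline{j};\ell')$ over $\Delta_r$, so that the image of the parametrising set is exactly ${\mathcal C}_S(k,n,\omega)$. Your approach, by contrast, is self-contained: you work directly inside $U_{\omega n,S}$, identify the $n$ ``threads'' in the explicit basis (one for each $j\in\bZ_n$, ending over vertex $\underline{j}$), and observe that a coordinate subrepresentation intersects each thread in a terminal segment of some length $\ell_j$. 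This avoids both the citation to \cite{FLP23} and the dependence on Theorem~\ref{thm:natural-projections} (whose proof in the paper appears later), at the cost of more explicit bookkeeping.

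One point to tighten: you write that ``the isomorphism type of the coordinate subrepresentation $P_\mathcal{J}$ is determined by a tuple $(\ell_j)_{j\in\bZ_n}$''. This is slightly misleading, because the abstract isomorphism class of $P_\mathcal{J}$ only remembers, for each vertex $i\in\bZ_r$, the multiset $\{\ell_j : \underline{j}=i\}$, not which $\ell_j$ belongs to which $j$. What makes the map a bijection is that $P_\mathcal{J}$ is a \emph{coordinate} subrepresentation, so each indecomposable summand lives in a specific ambient thread canonically labelled by some $j\in[n]$; this is what pins down the full tuple $(\ell_j)_j$ rather than just its multiset. You essentially say this later (``tracking which basis vectors belong to which indecomposable string''), but it is worth making explicit at the outset, since it is precisely the reason the target set is indexed by $\bZ_n$ rather than $\bZ_r$.
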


\begin{proof}
From \cite[Proposition~3.2]{FLP23} we obtain that the $T$-fixed points of $X(k,n,\omega)$ are parametrized by 
    \[ \Big\{ (\ell_j)_{j \in \bZ_n} \in [0,\omega n]^{\bZ_n} :  \bdim \,  \Big(\bigoplus_{j \in \bZ_n} U(j;\ell_j) \Big) = (k\omega, \dots, k\omega) \in \bN^{\bZ_n} \Big\}. \]
    By Theorem~\ref{thm:natural-projections} we know that $\mr{pr}_{[n] \setminus S}$ surjects the representations on the right hand side of this set to the fixed points of $X_S(k,n,\omega)$. The projection $\mr{pr}_{[n] \setminus S}$ sends the indecomposable $\Delta_n$-representation $U(j;\omega n)$ ending over the vertex $j \in \bZ_n$ to the indecomposable $\Delta_r$-representation $U(\underline{j};\omega r)$ ending over $\underline{j} \in \bZ_r$. Hence the image of $\mr{pr}_{[n] \setminus S}$ is described by ${\mathcal C}_S(k,n,\omega)$.
\end{proof}
Let $\omega\in \bN$, $k \in [n]$ and $S \subset [n]$ with $\#S =r$. Now we describe certain cut and paste moves on the segments of the elements in $\mathcal{C}_S({k,n,\omega})$.
For every element $\ell_\bullet \in \mathcal{C}_S({k,n,\omega})$ there are maps of the form $f_{i,j,q} : \mathcal{C}_S({k,n,\omega}) \to \mathcal{C}_S({k,n,\omega})$ with
\[  \Big(f_{i,j,q}\big(\ell_\bullet\big)\Big)_s := \begin{cases} \ell_s & s \notin \{i,j\}\\ \ell_i-q & s=i\\  \ell_j+q & s=j\end{cases}.\]
whenever $q \in [0,\min\{\ell_i,\omega r-\ell_j\}]$ and $\underline{i}-\ell_{i} = \underline{j} - \ell_j-q \mod r$. Here $\underline{i} \in \bZ_r$ for $i \in \bZ_n$ is defined as in Proposition~\ref{prop:bijection_jug-cells}. It is straightforward to check that $f_{i,j,q}\big(\ell_\bullet\big)$ is again an element of $\mathcal{C}_S({k,n,\omega})$. These cut and paste moves describe all one-dimensional $T$-orbits.
\begin{lem}\label{lem:moment-graph}
    The vertices of the moment graph for the action of the torus $T$ on $X_S(k,n,\omega)$ are labelled by the elements of ${\mathcal C}_S({k,n,\omega})$. There is an oriented edge in the moment graph from $\ell_\bullet$ to $f_{i,j,q}\big(\ell_\bullet\big)$ if and only if $\ell_i > \ell_j+q$. The label of the edge $\ell_\bullet \to f_{i,j,q}\big(\ell_\bullet\big)$ is $\epsilon_j-\epsilon_i +\delta\cdot(\ell_i-\ell_j-q)$, where $\delta(\gamma):=\gamma_0$ and $\epsilon_i(\gamma):=\gamma_i$ for any $i \in [n]$ and $\gamma=(\gamma_0,\gamma_1,\dots,\gamma_n) \in T$.
\end{lem}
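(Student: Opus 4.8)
The plan is to transport the combinatorial description of the moment graph of $X(k,n,\omega)$ (the case $S=[n]$), which is presumably established in the companion work \cite{FLP23} and in \cite{LaPu2020}, along the surjective projection $\mr{pr}_{[n]\setminus S}$. First I would recall that Proposition~\ref{prop:bijection_jug-cells} identifies the $T$-fixed points of $X_S(k,n,\omega)$ with $\mathcal{C}_S(k,n,\omega)$, and that Theorem~\ref{trm:cells-are-strata} together with Lemma~\ref{lem:cell-decomp} guarantees that the hypotheses of the moment graph definition are met: the $T$-action on $X_S(k,n,\omega)$ is skeletal (finitely many fixed points by Lemma~\ref{lem:fixed-points-coincide}, and finitely many one-dimensional orbits since the variety has a $T$-stable cellular decomposition), and each cell contains exactly one fixed point. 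So the vertex set is automatically $\mathcal{C}_S(k,n,\omega)$, which gives the first assertion.

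Next I would identify the edges, i.e.\ the one-dimensional $T$-orbits. The key reduction is that every one-dimensional $T$-orbit closure in a $T$-stable cellular variety connects two fixed points lying in the closure of a common cell, and conversely. Working with the representation-theoretic model $\bigoplus_{j\in\bZ_n} U(\underline{j};\ell_j)$ for a fixed point, a one-dimensional deformation corresponds to a nontrivial one-parameter family of subrepresentations, which by the structure of the endomorphism ring and the injectivity of $U_{\omega n,S}$ is obtained by a single "cut and paste" move transferring a segment of length $q$ from the summand ending at $i$ to the summand ending at $j$; this is exactly the map $f_{i,j,q}$, and the constraint $\underline{i}-\ell_i=\underline{j}-\ell_j-q \bmod r$ is precisely the condition that the relocated segment fits into an actual $\Delta_r$-subrepresentation (the tails must line up $\bmod\, r$). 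I would verify that these moves exhaust all one-dimensional orbits by noting that $X(k,n,\omega)$'s moment graph edges are the analogous moves $f_{i,j,q}$ on $\mathcal{C}(k,n,\omega)$ (with $r$ replaced by $n$), and that $\mr{pr}_{[n]\setminus S}$, being $T$-equivariant and surjective on cells (Proposition~\ref{prop:sujection-on-cells}), sends one-dimensional orbits to one-dimensional orbits or to points, with the images being exactly the moves $f_{i,j,q}$ after the quotient by the $\mr{pr}$-functor of Remark following the projection maps; the condition $\underline{i}-\ell_i \equiv \underline{j}-\ell_j-q$ records which $\Delta_n$-moves survive the projection.

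Then I would pin down the orientation and the label. The orientation $\ell_\bullet \to f_{i,j,q}(\ell_\bullet)$ when $\ell_i > \ell_j+q$ comes from the convention in the moment graph definition: the arrow points toward the fixed point in the closure of the other cell, and the cell dimension is governed by the energy $e(\mathcal{J})$ (Definition~\ref{dfn:energy-juggling-pattern}, Corollary~\ref{cor:cell-dim-via-jugg}); shrinking the longer segment decreases the number of free coordinates $\mu^{(i)}_{j,\ell}$, so moving mass away from the longer summand lands in the closure of the higher-dimensional cell, giving the stated inequality. For the label, I would compute the weight of the torus action on the one-dimensional orbit $\mathcal{O}_{\ell_\bullet \to f_{i,j,q}(\ell_\bullet)}$ directly from the explicit cocharacter $\gamma.v_t^{(i)} = \gamma_0^{t-1}\gamma_{i-t+1}v_t^{(i)}$: the deformation parameter is a ratio of basis-vector weights whose $\gamma_0$-part contributes $\ell_i-\ell_j-q$ (the difference in segment lengths, i.e.\ positions shifted) and whose remaining part contributes $\epsilon_j-\epsilon_i$ (the endpoints of the two summands), yielding $\epsilon_j-\epsilon_i+\delta\cdot(\ell_i-\ell_j-q)$.

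The main obstacle I expect is the bookkeeping in the last step: correctly tracking which basis vector $v_t^{(\cdot)}$ parametrizes the one-dimensional orbit after the projection $\mr{pr}_{[n]\setminus S}$ has collapsed some vertices, and confirming that the $\gamma_0$-exponent works out to the \emph{difference of lengths} $\ell_i-\ell_j-q$ rather than some other linear combination. A clean way to handle this is to do the computation first in the $\Delta_n$-case (where it should match \cite{FLP23,LaPu2020}) for the move $f_{i,j,q}$ on $\mathcal{C}(k,n,\omega)$, and then observe that $\mr{pr}_{[n]\setminus S}$ is weight-preserving on the relevant basis vectors up to the reindexing $j\mapsto\underline{j}$, so the edge label is inherited verbatim; the congruence condition $\underline{i}-\ell_i\equiv\underline{j}-\ell_j-q\bmod r$ is exactly what makes the inherited label well-defined. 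I would also double-check the edge-direction convention against a small example such as $X_{\{1\}}(1,2,1)\cong\mr{Gr}_1(\bC^2)=\bP^1$, whose moment graph is a single edge, to make sure the inequality $\ell_i>\ell_j+q$ is oriented consistently with Corollary~\ref{cor:cell-dim-via-jugg}.
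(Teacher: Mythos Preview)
Your route differs from the paper's. The paper does not go through the case $S=[n]$ and project: it simply observes that $U_{\omega n,S}$ is a nilpotent $\Delta_r$-representation and applies \cite[Theorem~6.15]{LaPu2020}, which already describes the one-dimensional $T$-orbits (and their labels) in quiver Grassmannians for arbitrary nilpotent representations of the cyclic quiver. The translation of that theorem into the segment language of Proposition~\ref{prop:bijection_jug-cells} is then all that remains. Your projection idea is a genuine alternative, but it is more roundabout and, as written, has two gaps.

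First, the claim that the action is skeletal does not follow merely from having a $T$-stable cellular decomposition with isolated fixed points. Inside an affine cell $\bC^d$ with linear $T$-action there are finitely many one-dimensional orbits only if the $T$-weights on the cell are pairwise non-proportional; this is true here, but it requires the explicit weight computation (essentially the content of \cite{LaPu2020}), not just Lemma~\ref{lem:cell-decomp}. Second, your exhaustion argument is one-sided. Knowing that $\mr{pr}_{[n]\setminus S}$ sends one-dimensional orbits in $X(k,n,\omega)$ to one-dimensional orbits or points in $X_S(k,n,\omega)$ tells you only that the \emph{images} are among the moves $f_{i,j,q}$; it does not show that every one-dimensional orbit in $X_S(k,n,\omega)$ arises this way. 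To close this you would need to lift orbits back, e.g.\ by arguing that the restriction of $\mr{pr}_{[n]\setminus S}$ to a cell is a $T$-equivariant linear surjection of affine spaces and hence any coordinate (weight) line downstairs lifts to a coordinate line upstairs. That is doable, but it is precisely the kind of weight analysis that \cite[Theorem~6.15]{LaPu2020} already packages for $\Delta_r$ directly, making the detour through $S=[n]$ unnecessary.
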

\begin{proof}
The one-dimensional torus orbits in quiver Grassmannians for nilpotent $\Delta_n$-representations are described in \cite[Theorem~6.15]{LaPu2020}. This applies here since $U_{\omega,S}$ is nilpotent. The explicit description of the edges and their labels follows from the translation of \cite[Theorem~6.15]{LaPu2020} to the combinatorial language of Proposition~\ref{prop:bijection_jug-cells}.
\end{proof}

\subsection{T-equivariant cohomology}\label{sec:T-equi-cohomology}
The description of the moment graph $\mathcal{G}$ from Lemma~\ref{lem:moment-graph} allows to compute the ($T$-equivariant) cohomology ring of $X:=X_S(k,n,\omega)$ (cf.  \cite[Theorem~6.6]{LaPu2020}). Consider $R:=\mathbb{Q}[\epsilon_1, \ldots, \epsilon_n, \delta]$ as a $\mathbb{Z}$-graded ring for the grading $\deg(\epsilon_i)=\deg(\delta)=2$ for all $i\in [n]$. Let $\alpha(\ell_\bullet,\ell'_\bullet)$ denote the label of the edge $\ell_\bullet \to \ell'_\bullet$. 

The translation of \cite[Theorem~1.2.2]{GKM1998} to the combinatorial description of the moment graph from Lemma~\ref{lem:moment-graph} yields the following result.
\begin{cor}\label{cor:GKM}
There is an isomorphism of ($\mathbb{Z}$-graded) rings
\[
H_T^\bullet\big(X,\mathbb{Q}\big)\simeq \left\{\big(z_{\mathcal{\ell_\bullet}}\big)_{\ell_\bullet \in\mathcal{C}_S({k,n,\omega})} \in\bigoplus_{\ell_\bullet \in\mathcal{C}_S({k,n,\omega})}R \, \Bigg\vert   \begin{array}{c}
  z_{\ell_\bullet}\equiv z_{\ell_\bullet'}\mod \alpha(\ell_\bullet,\ell'_\bullet)\\
\mathrm{for} \ \mathrm{every} \ \mathrm{edge} \ \ell_\bullet\rightarrow \ell_\bullet' \end{array}\right\}.
\]
\end{cor}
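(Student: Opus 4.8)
The statement to prove is Corollary~\ref{cor:GKM}, which is explicitly flagged as a translation of the GKM theorem (\cite[Theorem~1.2.2]{GKM1998}) to the combinatorial moment graph description of Lemma~\ref{lem:moment-graph}. So the proof is really a matter of checking that the hypotheses of the GKM localization theorem are satisfied for $X = X_S(k,n,\omega)$ with the torus action from Section~\ref{sec:torus-action}, and then reading off the combinatorial data from Lemma~\ref{lem:moment-graph}.

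The plan is as follows. First I would verify the three GKM hypotheses. (1) The variety $X_S(k,n,\omega)$ is projective (Theorem~\ref{thm:geometric-properties}(i)) and carries a $T$-action with finitely many fixed points (Lemma~\ref{lem:fixed-points-coincide}, Lemma~\ref{lem:T-fixed-point-parametrisation}). (2) The action is skeletal, i.e. has finitely many one-dimensional orbits: this is exactly what Lemma~\ref{lem:moment-graph} provides, since it enumerates all one-dimensional $T$-orbits via the cut-and-paste moves $f_{i,j,q}$. (3) The crucial equivariant-formality / rational-smoothness-free input: the variety admits a $T$-stable cellular (hence in particular a filtrable, even) decomposition, established in Lemma~\ref{lem:cell-decomp} and Theorem~\ref{trm:cells-are-strata}, so that $H^*_T(X,\bQ)$ is a free module over $H^*_T(\mathrm{pt},\bQ) = R$ and the restriction map $H^*_T(X,\bQ) \to H^*_T(X^T,\bQ) = \bigoplus_{\ell_\bullet} R$ is injective. (A cellular decomposition with cells only in even real degrees forces $H^{\mathrm{odd}}(X)=0$ and equivariant formality, which is precisely the hypothesis under which GKM applies; I would cite \cite[Theorem~6.6]{LaPu2020} for this implication in the quiver-Grassmannian setting, as the text already does.)

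Next I would invoke the GKM theorem in the form of \cite[Theorem~1.2.2]{GKM1998}: under these hypotheses, the image of the injection $H^*_T(X,\bQ) \hookrightarrow \bigoplus_{p \in X^T} R$ is exactly the set of tuples $(z_p)$ such that for every one-dimensional orbit whose closure contains the two fixed points $p$ and $p'$, one has $z_p \equiv z_{p'} \pmod{\alpha}$, where $\alpha$ is the character by which $T$ acts on that orbit. It then only remains to substitute the combinatorial dictionary of Lemma~\ref{lem:moment-graph}: the fixed points are indexed by $\mathcal{C}_S(k,n,\omega)$, the edges by the moves $\ell_\bullet \to f_{i,j,q}(\ell_\bullet)$ (with the orientation condition $\ell_i > \ell_j + q$, which is irrelevant for the congruence relation since the relation is symmetric in $\ell_\bullet, \ell'_\bullet$), and the edge label is $\alpha(\ell_\bullet,\ell'_\bullet) = \epsilon_j - \epsilon_i + \delta\cdot(\ell_i-\ell_j-q) \in R$. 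This yields exactly the stated presentation of $H^*_T(X,\bQ)$, and the grading matches because each $\epsilon_i$ and $\delta$ has degree $2$, which is the degree of a weight of the $T$-action under the convention $\deg H^2_T(\mathrm{pt}) = 2$.

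The only genuine content beyond bookkeeping is the equivariant formality, i.e. that the localization map is injective and that the image is cut out by the edge relations without higher-codimension corrections. For general $T$-varieties GKM can fail, so one must use that $X_S(k,n,\omega)$ has an honest $T$-stable affine cell decomposition; I expect this to be the main point to be careful about, and it is already supplied by Lemma~\ref{lem:cell-decomp} together with Theorem~\ref{trm:cells-are-strata} (the cells are $T$-stable, each containing a unique fixed point). Given that, the proof is a direct application of \cite[Theorem~1.2.2]{GKM1998} together with \cite[Theorem~6.6]{LaPu2020} and the explicit edge description of Lemma~\ref{lem:moment-graph}, so I would keep the written proof short, essentially to the two sentences already in the excerpt, and simply record the verification of the hypotheses.
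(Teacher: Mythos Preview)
Your proposal is correct and follows essentially the same approach as the paper: the paper simply cites \cite[Theorem~6.6]{LaPu2020} (which packages the verification of the GKM hypotheses for this class of quiver Grassmannians) and then states that translating \cite[Theorem~1.2.2]{GKM1998} via the moment-graph description of Lemma~\ref{lem:moment-graph} gives the result. Your write-up is just a more explicit unpacking of the same argument, verifying projectivity, skeletality, and equivariant formality from the results already established in the paper.
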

\begin{rem}
$H_T^\bullet(X_S(k,n,\omega),\bQ)$ admits a very nice basis as a free module over $R$, called Knutson-Tao (KT) basis (see \cite[Definition~3.2, Theorem~3.22]{LaPu2021}). 
\end{rem}

\subsection{Poset structures on the set of fixed points}\label{sec:Poset-structures}

In this subsection we identify the different combinatorial models describing the cell structure of $X_S(k,n,\omega)$.
For $\omega\in \bN$, $k \in [n]$ and $S \subset [n]$ with $\#S =r$, we introduce partial orders on the sets $X_S(k,n,\omega)^T$, ${\mathcal Jug}_S(k,n,\omega)$ and ${\mathcal C}_S(k,n,\omega)$ and study their relation under the bijections from Lemma~\ref{lem:T-fixed-point-parametrisation} and Proposition~\ref{prop:bijection_jug-cells}.

For $p,p' \in X_S(k,n,\omega)^T$ we write $p' \preceq p$ if $\overline{ C_p}$ contains $p'$. By Theorem~\ref{trm:cells-are-strata} we obtain the same partial order $\preceq$ if we consider closures of $\mr{Aut}_{\Delta_r}(U_{\omega r})$-orbits. Given $\mathcal{J}_\bullet, \mathcal{J}_\bullet' \in {\mathcal Jug}_S(k,n,\omega)$ we write $\mathcal{J}_\bullet \geq_{\mathcal J} \mathcal{J}_\bullet'$ iff $j^{(i)}_q \leq j'^{(i)}_q$ for all $i \in \bZ_r$ and $q \in[k\omega]$ where we order each $J_i \in \binom{[n\omega]}{k\omega}$ as
\[ \big(j^{(i)}_1 <  j^{(i)}_2 < \ldots < j^{(i)}_{k\omega} \big). \]
For two elements $\ell_\bullet, \ell_\bullet' \in {\mathcal C}_S(k,n,\omega)$ we write $\ell_\bullet \geq_{\mathcal C} \ell_\bullet'$ if there exists an oriented path from $\ell_\bullet$ to $\ell_\bullet'$ in the moment graph for the $T$-action on $X_S(k,n,\omega)$ (as described in Lemma~\ref{lem:moment-graph}). 

\begin{thm}\label{thm:poset-iso}
For $\omega\in \bN$, $k \in [n]$ and $S \subset [n]$ with $\#S =r$, there are order preserving poset isomorphisms between  $X_S(k,n,\omega)^T$, ${\mathcal Jug}_S(k,n,\omega)$ and ${\mathcal C}_S(k,n,\omega)$. 
\end{thm}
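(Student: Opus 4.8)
The plan is to establish two order-preserving bijections, one relating $X_S(k,n,\omega)^T$ with ${\mathcal Jug}_S(k,n,\omega)$ and another relating ${\mathcal Jug}_S(k,n,\omega)$ with ${\mathcal C}_S(k,n,\omega)$, and to check that each respects the partial orders in both directions. The underlying set bijections are already in hand: Lemma~\ref{lem:T-fixed-point-parametrisation} gives $X_S(k,n,\omega)^T \cong {\mathcal Jug}_S(k,n,\omega)$, and Proposition~\ref{prop:bijection_jug-cells} gives ${\mathcal Jug}_S(k,n,\omega) \cong {\mathcal C}_S(k,n,\omega)$. So the content is entirely about the orders. I would first make the map from a fixed point $p$ (equivalently a juggling pattern $\mathcal{J}$) to its tuple $\ell_\bullet \in {\mathcal C}_S(k,n,\omega)$ completely explicit: $p$ as a $T$-fixed point determines a coordinate subrepresentation of $U_{\omega n,S}$, which as a nilpotent $\Delta_r$-representation decomposes uniquely as $\bigoplus_{j\in\bZ_n} U(\underline{j};\ell_j)$, and the resulting $\ell_\bullet$ is the associated element of ${\mathcal C}_S(k,n,\omega)$; conversely, $\ell_\bullet$ reconstructs the index sets $J_i$ by reading off, over each vertex, which basis vectors lie in the span of the indecomposable summands.

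Next I would handle the comparison $\preceq$ versus $\geq_{\mathcal C}$. By Theorem~\ref{trm:cells-are-strata}, $p'\preceq p$ means $p'$ lies in the closure of the cell $C_p$, equivalently in the closure of the $\mr{Aut}_{\Delta_n}(U_{\omega n})$-orbit of $p$. On the other hand, by the very definition of the moment graph in Lemma~\ref{lem:moment-graph}, an oriented edge $\ell_\bullet \to \ell_\bullet'$ records a one-dimensional orbit closure whose other endpoint $\ell_\bullet'$ lies in the closure of the cell of $\ell_\bullet$; so the existence of an oriented path, which is the definition of $\geq_{\mathcal C}$, gives $\ell_\bullet' \preceq \ell_\bullet$ via transitivity of cell-closure containment. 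For the reverse implication — that $p'\preceq p$ forces an oriented path — I would invoke the standard fact (as in \cite{LaPu2020}) that for a cellular, skeletal $T$-variety the cell-closure order is generated by the one-dimensional orbits recorded in the moment graph, i.e. $p'$ lies in $\overline{C_p}$ iff $p'$ is connected to $p$ by an oriented path; this applies here because $U_{\omega n,S}$ is nilpotent and the BB-decomposition is cellular (Lemma~\ref{lem:cell-decomp}). This identifies $(X_S(k,n,\omega)^T,\preceq) \cong ({\mathcal C}_S(k,n,\omega),\geq_{\mathcal C})$.

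It remains to match these with $({\mathcal Jug}_S(k,n,\omega),\geq_{\mathcal J})$. The natural strategy is to translate a single cut-and-paste move $f_{i,j,q}$ on $\mathcal{C}_S(k,n,\omega)$, with $\ell_i > \ell_j+q$, into the corresponding change on the index sets $J_\bullet$, and to verify that such a move decreases (weakly) each ordered entry $j^{(i)}_q$ of exactly one index set while leaving the others unchanged — hence gives a covering relation for $\geq_{\mathcal J}$ — and conversely that every covering relation for $\geq_{\mathcal J}$ arises this way. Concretely, reducing $\ell_i$ by $q$ and increasing $\ell_j$ by $q$ corresponds to moving $q$ basis vectors from one indecomposable summand into a longer one, which in index-set terms replaces some entries of the relevant $J$ by strictly larger ones; comparing with the componentwise order on the sorted tuples gives the monotonicity in both directions. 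The main obstacle I anticipate is precisely this bookkeeping: carefully converting between the three encodings (the $J_i$'s, the segment tuple $\ell_\bullet$, and the actual coordinate subrepresentation) and checking that the three "local" order generators — cell-closure covers, moment-graph edges, and single swaps $j^{(i)}_q \rightsquigarrow j^{(i)}_q{+}1$ — correspond under the bijections. Once the generators are matched, the poset isomorphisms follow because each order is the transitive closure of its covering relations.
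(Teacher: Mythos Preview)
Your approach differs from the paper's. The paper gives a two-line reduction: the set bijections of Lemma~\ref{lem:T-fixed-point-parametrisation} and Proposition~\ref{prop:bijection_jug-cells} commute with the projections $\mr{pr}_{[n]\setminus S}$, all three partial orders for general $S$ are induced from those for $S=[n]$ via these projections, and the case $S=[n]$ is \cite[Theorem~4.6]{FLP23}. No direct comparison of moment-graph paths with cell closures is carried out for general $S$.

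Your direct strategy has a real gap at the step where you write ``invoke the standard fact (as in \cite{LaPu2020}) that for a cellular, skeletal $T$-variety the cell-closure order is generated by the one-dimensional orbits recorded in the moment graph''. This is \emph{not} a general fact: the forward direction (an oriented edge witnesses a closure containment) is definitional, but the converse---that every relation $p'\in\overline{C_p}$ is realised by an oriented path---can fail for BB-cellular GKM varieties and needs a variety-specific argument. For these quiver Grassmannians the statement is true, but establishing it is precisely the content of \cite[Theorem~4.6]{FLP23} in the case $S=[n]$, which the paper then transports. If you want to avoid that reduction you would have to reproduce an argument of comparable depth, not cite it as folklore.

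Secondarily, your description of the effect of a move $f_{i,j,q}$ on the juggling data is inaccurate: the indecomposables $U(\underline{i};\ell_i)$ stretch over many vertices of $\Delta_r$, so such a move typically alters several of the $J_s$ at once (not ``exactly one index set''), and moment-graph edges are not in general covering relations of $\geq_{\mathcal J}$. This is the bookkeeping you anticipate, but as written it would not check out.
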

\begin{proof}
The isomorphisms on the level of sets were introduced in Lemma~\ref{lem:T-fixed-point-parametrisation} and Proposition~\ref{prop:bijection_jug-cells}. These isomorphisms commute with the projections $\mr{pr}_{S\setminus S'}$ and the poset structures as defined above are induced by the poset structures for the case $S=[n]$ as studied in \cite[Section~4.3]{FLP23}. Hence the claim follows from \cite[Theorem~4.6]{FLP23}.
\end{proof}
\begin{cor}\label{cor:about-cells-and-their-closures}
The closure of every cell in $X_S(k,n,\omega)$ is obtained as
\[ \overline{ C_\mathcal{J}} = \bigcup_{\mathcal{J}' \in {\mathcal Jug}_S(k,n,\omega) \ \mathrm{s.t.:} \ \mathcal{J}' \leq_{\mathcal \mathcal{J}} \mathcal{J}} C_{\mathcal{J}'}.\]
Moreover the moment graph of $\overline{ C_\mathcal{J}}$ is the full subgraph of the graph described in Lemma~\ref{lem:moment-graph} on the vertices corresponding to $\mathcal{J}' \leq_{\mathcal J} \mathcal{J}$. The dimension of $C_\mathcal{J}$ is the number of edges in the moment graph starting at $\mathcal{J}$. This equals the energy of the juggling pattern $\mathcal{J}$.
\end{cor}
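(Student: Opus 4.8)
The plan is to deduce Corollary~\ref{cor:about-cells-and-their-closures} from the poset isomorphism of Theorem~\ref{thm:poset-iso} together with the cellular decomposition already established. First I would recall that by Lemma~\ref{lem:cell-decomp} the Bia\l ynicki-Birula decomposition of $X_S(k,n,\omega)$ is cellular, so its closure relations among cells are governed by the partial order $\preceq$ on $X_S(k,n,\omega)^T$ defined in Section~\ref{sec:Poset-structures} by $p'\preceq p$ iff $p'\in\overline{C_p}$. Transporting this order along the bijection of Lemma~\ref{lem:T-fixed-point-parametrisation}, Theorem~\ref{thm:poset-iso} identifies it with $\leq_{\mathcal J}$ on ${\mathcal Jug}_S(k,n,\omega)$; hence $\overline{C_\mathcal{J}}$, being $T$-stable and closed, is a union of cells, and a cell $C_{\mathcal{J}'}$ lies in it precisely when $\mathcal{J}'\leq_{\mathcal J}\mathcal{J}$. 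This gives the first displayed formula directly.

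Next I would address the moment graph statement. Since $\overline{C_\mathcal{J}}$ is a closed $T$-stable subvariety, it inherits a skeletal $T$-action and the $T$-stable cellular decomposition $\bigcup_{\mathcal{J}'\leq_{\mathcal J}\mathcal{J}}C_{\mathcal{J}'}$, so its moment graph is defined. Its vertex set is $(\overline{C_\mathcal{J}})^T=\{\mathcal{J}'\leq_{\mathcal J}\mathcal{J}\}$ by the previous paragraph. For the edges: a one-dimensional $T$-orbit in $\overline{C_\mathcal{J}}$ is in particular a one-dimensional $T$-orbit in $X_S(k,n,\omega)$, and conversely any one-dimensional orbit of $X_S(k,n,\omega)$ whose closure has both endpoints $\preceq\mathcal{J}$ is contained in $\overline{C_\mathcal{J}}$ because the orbit closure lies in the closure of whichever cell contains the generic point, and that cell is indexed by a vertex $\leq_{\mathcal J}\mathcal{J}$. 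Moreover the defining condition ``$y$ belongs to the closure of the cell containing $x$'' is intrinsic to the subgraph, so no spurious edges appear and none are lost. Therefore the moment graph of $\overline{C_\mathcal{J}}$ is exactly the full subgraph of the graph in Lemma~\ref{lem:moment-graph} on the vertices $\{\mathcal{J}'\leq_{\mathcal J}\mathcal{J}\}$.

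For the dimension count I would invoke the general fact (valid for cellular BB-decompositions with skeletal torus action, cf.\ the moment graph formalism and \cite{LaPu2020}) that the dimension of the cell $C_\mathcal{J}$ equals the number of edges in the moment graph emanating from the vertex $\mathcal{J}$ in the direction of decreasing cells; concretely each such outgoing edge corresponds to an independent attracting coordinate direction at the fixed point $p_\mathcal{J}$, as one reads off from the explicit coordinates in Equation~\eqref{eqn:coordinates-in-cells} used in the proof of Proposition~\ref{prop:sujection-on-cells}. Combining this with Corollary~\ref{cor:cell-dim-via-jugg}, which identifies $\dim_\bC C_\mathcal{J}=e(\mathcal{J})$, gives the final assertion that the number of outgoing edges at $\mathcal{J}$ equals the energy $e(\mathcal{J})$.

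The main obstacle I anticipate is the matching between outgoing edges and the energy, i.e.\ verifying that the cut-and-paste moves $f_{i,j,q}$ with $\ell_i>\ell_j+q$ landing below $\mathcal{J}$ are in natural bijection with the independent coefficients $\mu_{j,\ell}^{(i)}$ (modulo the linking relations $\mu_{j+q_i,\ell+q_i}^{(i+1)}=\mu_{j,\ell}^{(i)}$) counted by $e(\mathcal{J})$; this requires carefully translating between the ${\mathcal C}_S$-model of Lemma~\ref{lem:moment-graph} and the ${\mathcal Jug}_S$/coordinate model, using the bijection of Proposition~\ref{prop:bijection_jug-cells}. The set-theoretic closure formula and the full-subgraph statement, by contrast, are essentially formal consequences of Theorem~\ref{thm:poset-iso} and the cellularity, so I would keep those brief.
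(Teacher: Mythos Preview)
Your approach is essentially the same as the paper's, but there is one step you gloss over that the paper handles explicitly. You write that ``$\overline{C_\mathcal{J}}$, being $T$-stable and closed, is a union of cells.'' This does not follow from $T$-stability alone: a closed $T$-stable subvariety need not be a union of BB-cells, and Lemma~\ref{lem:cell-decomp} only asserts that each attracting set is an affine space, not that the decomposition satisfies the frontier condition. What you do get from $T$-stability is the inclusion $\overline{C_\mathcal{J}}\subseteq\bigcup_{p'\preceq p_\mathcal{J}}C_{p'}$ (by taking $\lim_{z\to 0}$ of any point), but the reverse inclusion --- that $p'\in\overline{C_\mathcal{J}}$ forces all of $C_{p'}\subseteq\overline{C_\mathcal{J}}$ --- requires more. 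The paper closes this gap via Theorem~\ref{trm:cells-are-strata}: each cell is an $\mathrm{Aut}_{\Delta_n}(U_{\omega n})$-orbit, and since closures of orbits of an algebraic group action are unions of orbits, cell closures are unions of cells. Once you insert this reference, the rest of your argument (the full-subgraph claim, the dimension via \cite[Corollary~6.5]{LaPu2020} and Corollary~\ref{cor:cell-dim-via-jugg}) matches the paper's proof.
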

\begin{proof}
By Theorem~\ref{trm:cells-are-strata}, every cell closure in $X_S(k,n,\omega)$ is a union of $\mr{Aut}_{\Delta_r}(U_{\omega r})$-orbits. Now the poset isomorphism from Theorem~\ref{thm:poset-iso} implies the desired description of the closure.
In particular, the moment graph of any cell closure is the full subgraph on the vertices which are smaller with respect to any of the partial orders. The dimension formulas are obtained from \cite[Corollary~6.5]{LaPu2020} and Corollary~\ref{cor:cell-dim-via-jugg}.
\end{proof}
\begin{proof}[Proof of Lemma~\ref{lem:parametrization-of-irreducible-components}]
By definition of $\mathcal{I}rr_S(k,n)$ it follows that each element parametrizes an irreducible component of $X_S(k,n,\omega)$. It remains to show that there are no other irreducible components. 
This is equivalent to show that the image of $\mathcal{S}_{U_I} \subset X(k,n,\omega)$ under $\mr{pr}_{[n] \setminus S}$ for every $I \in \binom{[n]}{k} \setminus \mathcal{I}rr_S(k,n)$ is contained in $\overline{S_{U_{I'}}} \subset X_S(k,n,\omega)$ for some $I' \in \mathcal{I}rr_S(k,n)$. It is sufficient to prove this for the top-dimensional cells in each of the strata. Let $\mathcal{J} \in {\mathcal Jug}_S(k,n,\omega)$ represent the top-dimensional cell of $\mr{pr}_{[n] \setminus S} \mathcal{S}_{U_I}$ for $I \in \binom{[n]}{k} \setminus \mathcal{I}rr_S(k,n)$. By assumption there exists an $i \in I$ such that $i \notin S$ and $i+1 \notin I$. Let $\mathcal{J}'$ be the juggling pattern corresponding to $I':= I\cap\{i+1\} \setminus \{ i\}$. By Lemma~\ref{lem:moment-graph}, there exists a path in the moment graph from $\mathcal{J}'$ to $\mathcal{J}$. Hence, the closure of $W_{\mathcal{J}'}$ contains $W_{\mathcal{J}}$ by the poset isomorphism from Theorem~\ref{thm:poset-iso}. Now the claim follows because we can inductively apply this procedure until we arrive at $I' \in \mathcal{I}rr_S(k,n)$.
\end{proof}
\subsection{Poincar\'e polynomials}
For $\omega \in \bN$, $k \in [n]$ and $S \subset [n]$, the Poincar\'e polynomial of $X_S(k,n,\omega)$ is defined as 
\[ P_S^{k,n,\omega}(q) := \sum_{p \in X_S(k,n,\omega)^T} q^{\dim_\bC W_p}.\]
\begin{rem}
Each combinatorial model for the cells structure provides a different method to compute $P_S^{k,n,\omega}(q)$. Depending on the parameters it is convenient to switch
between the models.  
\end{rem}
\begin{lem}\label{lem:Poincare-Polynomial}
For $\omega \in \bN$, $k \in [n]$ and $S \subset [n]$ with $\# S = r$, the Poincar\'e polynomial of $X_S(k,n,\omega)$ is
\[ P_S^{k,n,\omega}(q) = \sum_{\mathcal{J} \in \mathcal{J}ug_S({k,n,\omega})} q^{e(\mathcal{J})}.\]
Here $e(\mathcal{J})$ denotes the energy of the juggling pattern as in Definition~\ref{dfn:energy-juggling-pattern}.
\end{lem}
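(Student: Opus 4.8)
The plan is to assemble the statement from three ingredients that have already been established: the parametrization of the $T$-fixed points, the identification of fixed points with attracting cells, and the dimension formula for cells in terms of the energy of a juggling pattern. First I would invoke Lemma~\ref{lem:cell-decomp} (together with Lemma~\ref{lem:fixed-points-coincide}): the Bialynicki--Birula decomposition of $X_S(k,n,\omega)$ with respect to the $\bC^*$-action of Section~\ref{sec:torus-action} is a cellular decomposition, so $X_S(k,n,\omega) = \bigsqcup_{p \in X_S(k,n,\omega)^{\bC^*}} W_p$ with each $W_p$ an affine space. Since a cellular decomposition into affine spaces gives a basis of the (even) cohomology, the Poincar\'e polynomial $P_S^{k,n,\omega}(q) = \sum_{p} q^{\dim_\bC W_p}$ is exactly the generating function that sums $q^{\dim_\bC W_p}$ over all cells; this is consistent with the displayed definition just above the lemma, so really the definition already puts us one reindexing away from the claim.

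Next I would reindex the sum over fixed points by juggling patterns. By Lemma~\ref{lem:T-fixed-point-parametrisation} the map $p \mapsto \mathcal{J}$ gives a bijection between $X_S(k,n,\omega)^T$ and ${\mathcal Jug}_S(k,n,\omega)$, and by Lemma~\ref{lem:fixed-points-coincide} the $T$-fixed set coincides with the $\bC^*$-fixed set, so the attracting cell $W_p$ is the same whether computed for $T$ or for $\bC^*$; write $W_\mathcal{J}$ for the cell attached to the fixed point corresponding to $\mathcal{J}$. Substituting, $P_S^{k,n,\omega}(q) = \sum_{\mathcal{J} \in {\mathcal Jug}_S(k,n,\omega)} q^{\dim_\bC W_\mathcal{J}}$. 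Finally I would apply Corollary~\ref{cor:cell-dim-via-jugg}, which states precisely that $\dim_\bC W_\mathcal{J} = e(\mathcal{J})$, the energy of the juggling pattern from Definition~\ref{dfn:energy-juggling-pattern}. Combining these gives $P_S^{k,n,\omega}(q) = \sum_{\mathcal{J} \in {\mathcal Jug}_S(k,n,\omega)} q^{e(\mathcal{J})}$, as claimed.

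In this argument there is no genuine obstacle: every nontrivial input (cellularity of the BB-decomposition, the coincidence of fixed-point sets, the bijection with juggling patterns, and the energy formula for cell dimensions) has already been proved earlier in the paper, and the lemma is a bookkeeping consequence. If anything, the only point deserving a word of care is that the cohomology Poincar\'e polynomial genuinely agrees with the cell-dimension generating function, which holds because the BB-cells are affine spaces (hence the decomposition is a CW-structure with cells only in even real dimension, so there are no differentials and no odd cohomology); this is standard once Lemma~\ref{lem:cell-decomp} is in hand. Thus the proof is essentially: ``combine Lemma~\ref{lem:cell-decomp}, Lemma~\ref{lem:fixed-points-coincide}, Lemma~\ref{lem:T-fixed-point-parametrisation} and Corollary~\ref{cor:cell-dim-via-jugg}.''
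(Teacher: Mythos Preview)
Your proposal is correct and follows essentially the same approach as the paper: the paper's proof simply cites Lemma~\ref{lem:T-fixed-point-parametrisation} for the bijection between cells and juggling patterns and Corollary~\ref{cor:cell-dim-via-jugg} for the dimension formula $\dim_\bC W_\mathcal{J}=e(\mathcal{J})$. Your version is more verbose (you also invoke Lemma~\ref{lem:cell-decomp} and Lemma~\ref{lem:fixed-points-coincide} and justify why the cell-dimension generating function is the Poincar\'e polynomial), but since the paper already \emph{defines} $P_S^{k,n,\omega}(q)$ as $\sum_p q^{\dim_\bC W_p}$, these extra steps are not needed and the two arguments coincide.
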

\begin{proof}
In Lemma~\ref{lem:T-fixed-point-parametrisation}, the bijection between the cells of $X_S(k,n,\omega)$ and elements of $\mathcal{J}ug_S({k,n,\omega})$ is established. The dimension of a cell is computed from the corresponding juggling pattern as described in Corollary~\ref{cor:cell-dim-via-jugg}.
\end{proof}

\section{Desingularization}\label{sec:desingularization}
Based on the general construction from \cite{PuRe2022} we construct an explicit desingularization of $X_S(k,n,\omega)$ (see also \cite{CFR13,FF13,KS14,S17}).

The idea of the following construction is to associate each irreducible component of
$X_S(k,n,\omega)$ a representations of an extended quiver such that certain quiver Grassmannians for these representation are smooth and resolve the singularities of the irreducible components of $X_S(k,n,\omega)$.

Let $\hat{\Delta}_r$ be the quiver with vertex set
\[ \big\{ (i,j) \ : \ i \in \bZ_r \ \mr{and} \ j \in [\omega r] \big\} \]
and arrows
\begin{align*}
&\big\{ a_{i,j} : (i,j)\to (i,j+1) \ : \ i \in \bZ_r \ \mr{and} \ j \in [\omega r-1] \big\} \cup\\
&\big\{ b_{i,j} : (i,j)\to (i+1,j-1) \ : \ i \in \bZ_r \ \mr{and} \ j \in [\omega r]\setminus \{1\} \big\}.
\end{align*}
For $M \in \mr{rep}_\bC^{\omega r}({\Delta}_r)$ (see Remark \ref{nilp}) its image under $\Lambda: \mr{rep}_\bC({\Delta}_r) \to \mr{rep}_\bC(\hat{\Delta}_r)$ is defined as 
 \[ \hat{M} := \big( (\hat{M}^{(i,j)})_{i \in \bZ_r, j \in [\omega r] }, (\hat{M}_{a_{i,j}}, \hat{M}_{b_{i,j+1}})_{i \in \bZ_r, j \in [\omega r-1]} \big)\]
 with 
 \begin{align*}
 \hat{M}^{(i,1)} &:= M^{(i)}   &\mr{for} \ j =1\\
\hat{M}^{(i,j)} &:= M_{i+j-2} \circ  M_{i+j-3} \circ  \dots \circ M_{i+1} \circ  M_{i} (M^{(i)})  &\mr{for} \ j \geq 2\\
\hat{M}_{a_{i,j}} &:= M_{i+j-1}  &\mr{for} \ j \geq 1\\
\hat{M}_{b_{i,j}} &:= \iota : \hat{M}^{(i,j)}  \hookrightarrow \hat{M}^{(i+1,j-1)}  &\mr{for} \ j \geq 2
 \end{align*}
Here the inclusion maps along $b_{i,j}$ arise naturally from the definition of the vector spaces of $\hat{M}$.

There is a corresponding restriction functor $\mr{res} : \mr{rep}_\bC(\hat{\Delta}_r) \to \mr{rep}_\bC({\Delta}_r)$ such that the image of $W \in \mr{rep}_\bC(\hat{\Delta}_r)$ is
\[ \mr{res}  \, W := \Big( \big(W^{(i,1)}\big)_{i \in \bZ_r}, \big(W_{b_{i,2}} \circ W_{a_{i,1}}\big)_{i \in \bZ_r} \Big).\]

\begin{dfn}
 Let $\omega\in \bN$, $k \in [n]$ and $S \subset [n]$. For $I \in \mathcal{I}rr_S(k,n)$ set $\hat{X}^I_S(k,n,\omega):= \mr{Gr}_{\bdim \, \hat{U}_I} \big(\hat{U}_{\omega n,S} \big)$ where $U_I$ is defined as in Lemma~\ref{lem:parametrization-of-irreducible-components}. The map
\[ \pi_I : \hat{X}^I_S(k,n,\omega) \longrightarrow X_S(k,n,\omega) \]
is defined by $\pi_I(V) := \mr{res} V$ for all $V \in \hat{X}^I_S(k,n,\omega)$.   
\end{dfn}
\begin{rem}
The vector spaces of $\hat{U}_{\omega n}$ are spanned by subsets of the bases for the vector spaces of $U_{\omega n}$. Hence the $T$ action on $U_{\omega n}$ extends to the quiver Grassmannians $\hat{X}_S^I(k,n,\omega)$ for $I \in \binom{[n]}{k}$ in the obvious way. The same holds for the $\bC^*$ action.
\end{rem}
The following result is a special case of \cite[Theorem~3.18, Lemma~5.3]{PuRe2022}. 
\begin{thm}\label{thm:desing}
The map
\[ \pi := \bigsqcup_{I \in \mathcal{I}rr_S(k,n)} \pi_I  \ : \ \bigsqcup_{I \in \mathcal{I}rr_S(k,n)} \hat{X}_S^I(k,n,\omega) \ \longrightarrow \  X_S(k,n,\omega)\]
 is a $T$-equivariant desingularization of $X_S(k,n,\omega)$.
\end{thm}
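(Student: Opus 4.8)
The plan is to deduce the statement from the general desingularization theory of \cite{PuRe2022} by checking that our quiver Grassmannians $X^I_S(k,n,\omega)$ fit the hypotheses of \cite[Theorem~3.18, Lemma~5.3]{PuRe2022}. The key structural input is Remark~\ref{nilp}: the representation $U_{\omega n,S}$ lies in $\mr{rep}_\bC^{\omega r}(\Delta_r)$, so it makes sense to apply the functor $\Lambda$ and pass to the extended quiver $\hat{\Delta}_r$, whose construction encodes exactly the bound $N=\omega r$. First I would record that, by Lemma~\ref{lem:parametrization-of-irreducible-components}, the irreducible components of $X_S(k,n,\omega)$ are precisely the closed strata $\overline{\mathcal{S}_{U_I}}$ for $I \in \mathcal{I}rr_S(k,n)$, and that $U_I = \mr{pr}_{[n]\setminus S}\big(\bigoplus_{i\in I}U_i(\omega n)\big)$ is a direct sum of the projective-injective objects $U_{\underline i}(\omega r)$ of $\mr{rep}_\bC^{\omega r}(\Delta_r)$; this is the ``nice'' generic point whose orbit closure is the component, which is the setting to which the construction of \cite{PuRe2022} applies.

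The main steps are then: (1) verify that $\pi_I = \mr{res}|_{\hat{X}^I_S(k,n,\omega)}$ is well-defined and has image contained in $X_S(k,n,\omega)$ — this is immediate from the identity $\mr{res}\circ\Lambda \cong \mr{id}$ on $\mr{rep}_\bC^{\omega r}(\Delta_r)$ and the fact that $\mr{Gr}_{\bdim\hat U_I}(\hat U_{\omega n,S})$ is cut out inside a product of ordinary Grassmannians by the relations coming from the arrows $a_{i,j}, b_{i,j}$ of $\hat\Delta_r$; (2) invoke \cite[Theorem~3.18]{PuRe2022} to conclude that each $\hat{X}^I_S(k,n,\omega)$ is smooth and that $\pi_I$ is projective and birational onto the component $\overline{\mathcal{S}_{U_I}}$ — here one uses that $\hat U_I$ is rigid (having no self-extensions as a $\hat\Delta_r$-representation, since it comes from a projective-injective $\Delta_r$-module), which is the hypothesis guaranteeing smoothness of the ambient quiver Grassmannian, together with the general fact that $\Lambda$ induces an isomorphism on the open subset of the stratum; (3) assemble the disjoint union $\pi = \bigsqcup_I \pi_I$ and note that, since the irreducible components cover $X_S(k,n,\omega)$ and each $\pi_I$ surjects onto its component, $\pi$ is surjective, hence a desingularization in the sense that it is projective, surjective, birational over the smooth locus, with smooth source; (4) check $T$-equivariance: by the Remark preceding the theorem the $T$-action on $U_{\omega n,S}$ extends to each $\hat X^I_S(k,n,\omega)$ because the vector spaces $\hat U^{(i,j)}$ are spanned by subsets of the standard basis, and $\mr{res}$ is manifestly $T$-equivariant, so $\pi$ intertwines the actions.

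The step I expect to be the genuine obstacle is (2), namely matching our explicit combinatorial setup with the precise hypotheses of \cite[Theorem~3.18]{PuRe2022}: one must confirm that $U_{\omega n,S}$ together with the dimension vectors $\bdim\,\hat U_I$ satisfies the conditions under which that theorem produces a smooth quiver Grassmannian (e.g. that the generic subrepresentation of type $U_I$ has the expected self-extension vanishing, and that the chosen dimension vector on $\hat\Delta_r$ is the image under $\Lambda$ of the generic point and not some degenerate type). This is where the hypothesis $U_I$ projective-injective in $\mr{rep}_\bC^{\omega r}(\Delta_r)$ — guaranteed by Remark~\ref{nilp} and the fact that the $U_i(\omega n)$ project to the $U_{\underline i}(\omega r)$ — does the work, but spelling out that $\Lambda$ sends rigid $\Delta_r$-modules to modules whose quiver Grassmannians of the relevant dimension vector are smooth requires careful bookkeeping of the arrows of $\hat\Delta_r$ against the definitions in \cite{PuRe2022}. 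Everything else (well-definedness, surjectivity, equivariance) is routine once this identification is in place.
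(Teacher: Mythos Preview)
Your proposal is correct and follows exactly the paper's approach: the paper simply states that Theorem~\ref{thm:desing} is a special case of \cite[Theorem~3.18, Lemma~5.3]{PuRe2022} and gives no further argument, while your plan spells out precisely the verifications (nilpotency via Remark~\ref{nilp}, the projective-injective nature of the $U_I$, parametrization of components via Lemma~\ref{lem:parametrization-of-irreducible-components}, and $T$-equivariance from the basis remark) needed to see that the cited results apply.
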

\begin{rem}
 The quiver Grassmannians $\hat{X}_S^I(k,n,\omega)$ are irreducible by \cite[Proposition 37]{CEFR2018}.
\end{rem}
The next result is a special case of \cite[Theorem~3.21]{PuRe2022} and generalizes \cite[Theorem~7.10]{FLP21}.
\begin{thm}\label{trm:tower-of-grassmann-bundles}
For each $I \in \mathcal{I}rr_S(k,n)$ the quiver Grassmannian $\hat{X}_S^I(k,n,\omega)$ is isomorphic to a tower of fibrations 
\[  \hat{X}_S^I(k,n,\omega) = X_1 \to X_{2} \to \dots \to X_{\omega r } = \mr{pt} \]
where each map $X_j \to X_{j+1}$ for $j \in [\omega r-1]$ is a fibration with fiber isomorphic to a product of Grassmannians of subspaces.
\end{thm}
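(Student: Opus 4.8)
The statement is announced as a special case of \cite[Theorem~3.21]{PuRe2022} (it recovers \cite[Theorem~7.10]{FLP21} when $S=[n]$), so the plan is to verify that $\hat{X}^I_S(k,n,\omega)$ is one of the quiver Grassmannians treated there and then transport the conclusion. The first task is to assemble the inputs. The indecomposable decomposition $U_{\omega n,S}\cong\bigoplus_{i\in\bZ_r}U(i;\omega r)\otimes\bC^{q_i}$ established above, together with Remark~\ref{nilp}, shows that $U_{\omega n,S}$ lies in $\mr{rep}_\bC^{\omega r}\Delta_r$; hence the quiver $\hat{\Delta}_r$ and the functor $\Lambda$ introduced in this section are exactly the extended quiver and the functor underlying the resolution of \cite{PuRe2022} for the bound $N=\omega r$, and $\hat{U}_{\omega n,S}=\Lambda(U_{\omega n,S})$. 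Secondly, by Lemma~\ref{lem:parametrization-of-irreducible-components} the representation $U_I$, for $I\in\mathcal{I}rr_S(k,n)$, occurs as a subrepresentation of $U_{\omega n,S}$ whose closed stratum is an irreducible component of $X_S(k,n,\omega)$, and $\bdim\, U_I=(k\omega,\dots,k\omega)$; so $\bdim\,\hat{U}_I=\bdim\,\Lambda(U_I)$ is an admissible dimension vector and $\hat{X}^I_S(k,n,\omega)=\mr{Gr}_{\bdim\,\hat{U}_I}(\hat{U}_{\omega n,S})$ is precisely one of the smooth resolving quiver Grassmannians of \cite[Theorem~3.18, Lemma~5.3]{PuRe2022} (compare Theorem~\ref{thm:desing}). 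With this dictionary in hand, \cite[Theorem~3.21]{PuRe2022} applies and produces the tower; the number of terms equals the height $\omega r$ of $\hat{\Delta}_r$, and the fibres are products of ordinary Grassmannians of subspaces.

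To make the mechanism visible — this is the substance of \cite[Theorem~3.21]{PuRe2022} specialised to the present situation — I would describe the tower directly. Along the arrows $b$ of $\hat{\Delta}_r$ the structure maps of $\hat{U}_{\omega n,S}$ are inclusions, so for a subrepresentation $\hat{V}$ and a vertex $(i,j)$, once the subspaces at all other vertices are fixed, $\hat{V}^{(i,j)}$ is constrained to contain a fixed space $A_{(i,j)}$ (the sum of the images of the already-chosen neighbours mapping into $(i,j)$) and to be contained in a fixed space $B_{(i,j)}$ (the intersection of the preimages of the already-chosen neighbours into which $(i,j)$ maps); for the dimension vector $\bdim\,\hat{U}_I$ one has $A_{(i,j)}\subseteq B_{(i,j)}$, so $\hat{V}^{(i,j)}$ ranges over a single Grassmannian $\mr{Gr}(B_{(i,j)}/A_{(i,j)})$ of the prescribed dimension. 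Removing the vertices of $\hat{\Delta}_r$ in the order dictated by the construction of \cite{PuRe2022}, and observing that the set of vertices removed at any one stage carries no arrows among its members, the corresponding forgetful morphism $X_j\to X_{j+1}$ has, over each of its points, a fibre equal to the product of the Grassmannians attached to the removed vertices; after $\omega r-1$ stages the remaining quiver Grassmannian is forced by the dimension vector to be a single point.

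The part I expect to be the genuine obstacle — and the only place where one really uses that $U_I$ represents an irreducible component rather than an arbitrary subrepresentation — is to show that each forgetful morphism $X_j\to X_{j+1}$ is a Zariski-locally trivial fibration, equivalently that the integers $\dim A_{(i,j)}$ and $\dim B_{(i,j)}$ do not vary along $X_{j+1}$. This is exactly the local-triviality statement proved in \cite{PuRe2022} (it is also what underlies the smoothness and irreducibility of $\hat{X}^I_S(k,n,\omega)$ recorded in Theorem~\ref{thm:desing} and the remark following it, the latter via \cite[Proposition~37]{CEFR2018}). In a self-contained treatment one would establish it by an explicit coordinate description of $\hat{X}^I_S(k,n,\omega)$, in the spirit of the cell coordinates used in the proof of Proposition~\ref{prop:sujection-on-cells}; granted this, the identification of $\hat{X}^I_S(k,n,\omega)$ with the tower, and the termination at a point after $\omega r-1$ steps, are formal.
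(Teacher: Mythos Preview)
Your proposal is correct and aligned with the paper's own treatment: the paper gives no proof at all beyond the sentence ``The next result is a special case of \cite[Theorem~3.21]{PuRe2022} and generalizes \cite[Theorem~7.10]{FLP21}'', so your first paragraph already matches the paper exactly. The additional two paragraphs, where you unpack the mechanism of the tower and isolate the local-triviality step as the substantive point, go beyond what the paper provides and constitute a useful expansion rather than a deviation.
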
 
\subsection{Natural projections}\label{sec:natural-proj-desing}
As in Section~\ref{sec:natural-projections}, the maps
\[ {\rm pr}_j: \prod_{i \in \bZ_n} {\rm Gr}_{\omega k}(\bC^{\omega n}) \to \prod_{i \in \bZ_n \setminus \{j\} }{\rm Gr}_{\omega k}(\bC^{\omega n})  \quad {\rm for} \ j \in \bZ_n \]
naturally induce projections between the desingularizations. 
\begin{rem}
Let $S \subset [n]$, $S=(s_1 < s_2 < \hdots < s_r)$. For $m \in [\omega n]$ let $\overline{m} \in [\omega r]$ be the image of $m$ under the induced projection $\mr{pr}_S$, keeping in mind the identification of $S\subset [n]$ with $[r]$.
There is 
an induced functor $\mr{pr}_{[n]\setminus S} : \mr{rep}_\bC^{\omega n} \hat{\Delta}_n \to \mr{rep}_\bC^{\omega r} \hat{\Delta}_r$ where an object $U \in \mr{rep}_\bC^{\omega n} \hat{\Delta}_n$ is sent to $V \in \mr{rep}_\bC^{\omega r} \hat{\Delta}_r$ with
\begin{align*}
    V^{(i,\overline{m})}&:= U^{(s_i,m)} \quad \mr{such} \ \mr{that} \  (i+m-1\mod n)\notin [n]\setminus S,\\
    V_{\alpha_{i,\overline{m}}}&:= U_{\alpha_{s_i,m+s_{i+1}-s_i-1}}\circ \dots \circ  U_{\alpha_{s_i,m}},\\
    V_{\beta_{i,\overline{m}}}&:= U_{\beta_{s_{i+1}-1,m-s_{i+1}+s_i-1}}\circ \dots \circ  U_{\beta_{s_i,m}}.
\end{align*}
\end{rem}

\begin{dfn}
Let $\hat{X}_S(k,n,\omega)=\bigsqcup_{I \in \mathcal{I}rr_S(k,n)} \hat{X}_S^I(k,n,\omega).$
\end{dfn}
\begin{thm}\label{thm:natural-projections-desing}
For $\omega \in \bN$, $k \in [n]$ and $S,S' \subset [n]$ with $S' \subset S$, the map
\({\rm pr}_{S \setminus S'} : \hat{X}_{S}(k,n,\omega)\to \hat{X}_{S'}(k,n,\omega) \) is surjective.
\end{thm}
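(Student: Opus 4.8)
The plan is to reduce the statement to the already-established surjectivity of ${\rm pr}_{S\setminus S'}$ on the base varieties (Theorem~\ref{thm:natural-projections}) together with the tower-of-fibrations description (Theorem~\ref{trm:tower-of-grassmann-bundles}), checking compatibility of these structures with the induced functor $\mr{pr}_{[n]\setminus S}:\mr{rep}_\bC^{\omega n}\hat\Delta_n\to\mr{rep}_\bC^{\omega r}\hat\Delta_r$ from the Remark preceding the theorem. First I would observe that, by induction on $\#(S\setminus S')$, it suffices to treat the case where $S'=S\setminus\{s_0\}$ consists of removing a single vertex, so that $r'=r-1$; this is the same reduction used in the proof of Theorem~\ref{thm:natural-projections}, and it turns the problem into analyzing a single projection ${\rm pr}_{s_0}$.

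Next I would fix $I'\in\mathcal{I}rr_{S'}(k,n)$ and produce some $I\in\mathcal{I}rr_S(k,n)$ with ${\rm pr}_{S\setminus S'}$ carrying $\hat X_S^I(k,n,\omega)$ onto $\hat X_{S'}^{I'}(k,n,\omega)$. Since $\mathcal{I}rr_{S'}(k,n)\subseteq\mathcal{I}rr_S(k,n)$ (removing $s_0$ from $S$ only imposes \emph{more} of the covering conditions $i\in I,\ i\notin S\Rightarrow i+1\in I$, never fewer), the natural candidate is simply $I:=I'$. With this choice $U_{I'}$ as a $\Delta_{r}$-representation projects under $\mr{pr}_{[n]\setminus S'}$-factoring to $U_{I'}$ as a $\Delta_{r'}$-representation, and one checks that $\Lambda$ is compatible with the projection functors, i.e. $\mr{pr}_{[n]\setminus S}(\hat U_{\omega n,S})$ agrees with $\hat U_{\omega n,S'}$ and likewise $\mr{pr}_{[n]\setminus S}(\hat U_{I})=\hat U_{I'}$; this is a direct comparison of vector spaces (subsets of the standard bases indexed by the segments $U(i;\omega r)$) and maps, using the explicit formulas in the Remark. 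Thus ${\rm pr}_{S\setminus S'}$ restricts to a morphism $\pi_I':\hat X_S^{I'}(k,n,\omega)\to\hat X_{S'}^{I'}(k,n,\omega)$, and it commutes with the two projections $\pi_{I'}$ to the base varieties and with the $T$-actions.

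For surjectivity of $\pi_I'$ I would use Theorem~\ref{trm:tower-of-grassmann-bundles}: both $\hat X_S^{I'}$ and $\hat X_{S'}^{I'}$ are iterated Grassmann-bundle towers over a point, and the induced functor $\mr{pr}_{[n]\setminus S}$ on the extended-quiver representations composes adjacent arrows of $\hat\Delta_n$ into arrows of $\hat\Delta_r$, so it identifies the tower for $\hat X_{S'}^{I'}$ with the quotient of the tower for $\hat X_S^{I'}$ obtained by collapsing the intermediate stages indexed by the vertex $s_0$. At each stage the map on fibers is a projection of a product of subspace-Grassmannians onto a sub-product (in the worst case composed with a fibration forgetting some factors), hence surjective; surjectivity then propagates up the tower by the standard fact that a morphism of fiber bundles which is surjective on base and on each fiber is surjective. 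Alternatively, and perhaps more cleanly, one can bypass the tower and argue on $T$-fixed points and cells exactly as in Propositions~\ref{prop:surjective-on-fixed-points} and~\ref{prop:sujection-on-cells}: the $\hat X_S^{I'}$ are again $T$-varieties with cellular BB-decompositions whose cells and fixed points are governed by the same juggling-type combinatorics, and ${\rm pr}_{S\setminus S'}$ sends fixed points to fixed points and cells onto cells. I expect the main obstacle to be the bookkeeping in verifying that $\mr{pr}_{[n]\setminus S}\circ\Lambda_n=\Lambda_r\circ\mr{pr}_{[n]\setminus S}$ (equivalently, that the dimension vector $\bdim\,\hat U_I$ is sent to $\bdim\,\hat U_{I'}$ and the inclusion maps $b_{i,j}$ are respected under composition), since the re-indexing $m\mapsto\overline m$ between $[\omega n]$ and $[\omega r]$ interacts with the choice of which composite arrows of $\hat\Delta_n$ become arrows of $\hat\Delta_r$; once this compatibility is in place, surjectivity is essentially formal from Theorem~\ref{thm:natural-projections} applied one layer at a time.
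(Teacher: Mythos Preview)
Your proposal is correct in spirit, but it takes a substantially more laborious route than the paper.

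The paper does not use the tower structure (Theorem~\ref{trm:tower-of-grassmann-bundles}) or rerun the fixed-point/cell argument of Propositions~\ref{prop:surjective-on-fixed-points}--\ref{prop:sujection-on-cells}.  Instead it argues by a simple diagram chase.  Having set up the commutative square
\[
\begin{array}{ccc}
\hat X_S^I(k,n,\omega) & \xrightarrow{\ \mr{pr}_{S\setminus S'}\ } & \hat X_{S'}^{I'}(k,n,\omega)\\[1mm]
\downarrow\pi_I & & \downarrow\pi_{I'}\\[1mm]
X_S^I(k,n,\omega) & \xrightarrow{\ \mr{pr}_{S\setminus S'}\ } & X_{S'}^{I'}(k,n,\omega)
\end{array}
\]
(as you also do), the paper chooses an open $U\subset X_{S'}^{I'}$.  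Since $\pi_{I'}$ is a desingularization, its restriction over the dense open locus where it is an isomorphism identifies $U$ with an open dense subset $\pi_{I'}^{-1}(U)$ of the irreducible variety $\hat X_{S'}^{I'}$.  Surjectivity of the bottom row (Theorem~\ref{thm:natural-projections}) together with commutativity then shows $\pi_{I'}^{-1}(U)\subset \mr{pr}_{S\setminus S'}\big(\hat X_S^I\big)$; since the source is projective, the image of $\mr{pr}_{S\setminus S'}$ is closed, hence contains $\overline{\pi_{I'}^{-1}(U)}=\hat X_{S'}^{I'}$.

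So the paper's key input is just \emph{birationality of a resolution + properness}, avoiding all of the bookkeeping you anticipate (the compatibility of $\Lambda$ with $\mr{pr}$ is only needed to produce the square, exactly as you observe).  Your tower approach would work once you verify that $\mr{pr}_{S\setminus S'}$ intertwines the two towers of Theorem~\ref{trm:tower-of-grassmann-bundles}; this is plausible but genuinely fiddly, since the second index in $\hat\Delta_r$ runs over $[\omega r]$ rather than $[\omega r']$ and the re-indexing $m\mapsto\overline m$ collapses levels in a nontrivial way.  Your alternative cell argument is also viable (and indeed the paper establishes the necessary cell structure in Lemma~\ref{lem:cell-decomp-desing}), but again costs more than the two-line argument above.
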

\begin{proof}
By Lemma~\ref{lem:parametrization-of-irreducible-components}, $\mathcal{I}rr_{S'}(k,n)$ parametrize the irreducible components of $X_{S'}(k,n,\omega)$. In Theorem~\ref{thm:natural-projections} it is shown that $X_{S}(k,n,\omega)$ surjects to $X_{S'}(k,n,\omega)$. Hence there is also a surjection of $\mathcal{I}rr_{S}(k,n)$ onto $\mathcal{I}rr_{S'}(k,n)$. 

Let $I \in \mathcal{I}rr_{S}(k,n)$ such that $\mr{pr}_{S\setminus S'}I=I'$ for a given $I' \in \mathcal{I}rr_{S'}(k,n)$. The construction of the desingularizations and the definition of the projections imply that the following diagram is commutative.
\begin{center}
\begin{tikzpicture}[scale=1]
\node at (-2,1) {$\hat{X}^I_S(k,n,\omega)$};
\node at (2,1) {$\hat{X}^{I'}_{S'}(k,n,\omega)$};

\node at (-2,-1) {${X}^I_S(k,n,\omega)$};
\node at (2,-1) {${X}^{I'}_{S'}(k,n,\omega)$};

\node at (0,1.3) {$\mr{pr}_{S \setminus S'}$};
\node at (0,-1.3) {$\mr{pr}_{S \setminus S'}$};

\node at (-2.5,0) {$\pi_I$};
\node at (2.5,0) {$\pi_{I'}$};

\draw[arrows={-angle 90}, shorten >=30, shorten <=30]  (-2,1) -- (2,1);

\draw[arrows={-angle 90}, shorten >=30, shorten <=30]  (-2,-1) -- (2,-1);

\draw[arrows={-angle 90}, shorten >=9, shorten <=9]  (-2,1) -- (-2,-1);

\draw[arrows={-angle 90}, shorten >=9, shorten <=9]  (2,1) -- (2,-1);

\end{tikzpicture}
\end{center}
Let $U \subset {X}^{I'}_{S'}(k,n,\omega)$ be open. By construction of the desingularization it follows that $\overline{\pi_{I'}^{-1}(U)}= \hat{X}^{I'}_{S'}(k,n,\omega)$.  Now the sujectivity of $\mr{pr}_{S \setminus S'}$ in the bottom row as proven in Theorem~\ref{thm:natural-projections} and the commutativity of the above diagram imply that 
\[ \pi_{I'}^{-1}(U) \subseteq{\rm pr}_{S \setminus S'} \hat{X}_{S}(k,n,\omega). \]
Hence the map in the top row of the diagram is surjective and the claim follows by application of the above argument to all irreducible components of ${X}^{I'}_{S'}(k,n,\omega)$.
\end{proof}
\subsection{Properties of the desingularization}
\begin{lem}(c.f. \cite[Theorem~5.5]{PuRe2022})\label{lem:cell-decomp-desing}
For every $I \in \mathcal{I}rr_S(k,n)$ the $T$-fixed points of $\hat{X}_S^I(k,n,\omega)$ are exactly the preimages of the $T$-fixed points of $X^I_S(k,n,\omega)$ under $\pi_I$ (where $X^I_S(k,n,\omega):=\overline{\mathcal{S}_{U_I}} \subset X_S(k,n,\omega)$). The $\bC^*$-attracting sets of these points provide a cellular decomposition of $\hat{X}^I_S(k,n,\omega)$.
\end{lem}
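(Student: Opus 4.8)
The statement is a specialization of \cite[Theorem~5.5]{PuRe2022}, so the plan is to verify that the hypotheses of that general theorem are met in our situation and then transport its conclusion. First I would recall the two ingredients on which \cite[Theorem~5.5]{PuRe2022} rests: the representation $U_I$ must be a nilpotent $\Delta_r$-representation so that its image $\hat{U}_I$ under $\Lambda$ is defined, and the $\bC^*$-action on $X_S(k,n,\omega)$ must be the restriction of the torus action whose weights are the ones used in the construction of $\hat{\Delta}_r$. Both hold here: $U_{\omega n,S}$ is nilpotent (it lies in $\mr{rep}_\bC^{\omega r}\Delta_r$ by the Proposition decomposing it as $\bigoplus_i U(i;\omega r)\otimes\bC^{q_i}$), and the $\bC^*$- and $T$-actions on $\hat{X}_S^I(k,n,\omega)$ were defined in the Remark following Theorem~\ref{thm:desing} precisely by letting $T$ act on the basis vectors of $\hat{U}_{\omega n,S}$ that span its vector spaces, so that $\pi_I$ is $T$-equivariant (Theorem~\ref{thm:desing}) and in particular $\bC^*$-equivariant.

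Granting equivariance, the first assertion — that $(\hat{X}_S^I)^T = \pi_I^{-1}\big((X_S^I)^T\big)$ — would be proven as follows. The inclusion $\pi_I\big((\hat X_S^I)^T\big)\subseteq (X_S^I)^T$ is immediate from $T$-equivariance. For the reverse inclusion one uses that $\pi_I$ is a desingularization (Theorem~\ref{thm:desing}), hence proper and birational, and that by Theorem~\ref{trm:tower-of-grassmann-bundles} the source $\hat X_S^I(k,n,\omega)$ is a tower of Grassmann-bundle fibrations, in particular smooth and projective with a $\bC^*$-action; so every fiber $\pi_I^{-1}(p)$ over a fixed point $p\in (X_S^I)^T$ is a non-empty projective $\bC^*$-variety and therefore contains a $\bC^*$-fixed point, which by finiteness of $(X_S^I)^T$ (Lemma~\ref{lem:fixed-points-coincide}) must itself be $T$-fixed once one checks the $\bC^*$- and $T$-fixed loci of $\hat X_S^I$ coincide (same argument as in Lemma~\ref{lem:fixed-points-coincide}, since the $\bC^*$ sits inside $T$ diagonally and the $\bC^*$-fixed set is already finite by the tower structure). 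The count that no \emph{extra} fixed points appear — i.e. that $\pi_I$ restricts to a bijection on fixed-point sets, not merely a surjection — follows from the explicit combinatorial description of $(\hat X_S^I)^T$ in \cite[Theorem~5.5]{PuRe2022}: the fixed points of a quiver Grassmannian for $\hat U_I$ are coordinate subrepresentations indexed by successor-closed subsets, and $\mr{res}$ identifies each with the corresponding fixed point of $X_S^I(k,n,\omega)$.

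For the second assertion, that the $\bC^*$-attracting sets of these fixed points form a cellular decomposition of $\hat X_S^I(k,n,\omega)$, I would invoke the same mechanism used for $X_S(k,n,\omega)$ itself in Lemma~\ref{lem:cell-decomp}: the tower of fibrations with Grassmannian-of-subspaces fibers from Theorem~\ref{trm:tower-of-grassmann-bundles}, combined with a $\bC^*$-action whose weights on each fiber are those of \cite[Section~4.4]{Pue2020}, forces the Białynicki-Birula strata to be affine spaces by \cite[Theorem~4.13]{Pue2020} (applied fiberwise and assembled along the tower). Alternatively, and more directly, this is exactly the content of the second half of \cite[Theorem~5.5]{PuRe2022}, so it suffices to note that $\hat X_S^I(k,n,\omega)$ is of the form to which that theorem applies. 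The main obstacle I anticipate is purely bookkeeping: making the translation between the indexing of $\hat{\Delta}_r$-vertices $(i,j)$ used here and the indexing in \cite{PuRe2022} precise enough to confirm that the weights of the $\bC^*$-action we defined really do match the ones in \cite[Section~4.4]{Pue2020}, and that the birational map $\pi_I$ is an isomorphism over the open stratum $\mathcal{S}_{U_I}$ so that the fixed-point bijection is genuinely a bijection rather than a finite surjection; once these matchings are checked, everything else is a direct citation.
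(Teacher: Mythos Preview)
The paper gives no proof of this lemma at all; it is stated with the parenthetical ``(c.f.\ \cite[Theorem~5.5]{PuRe2022})'' and nothing more, meaning the result is simply quoted as a specialization of that theorem. Your plan---verify that the hypotheses of \cite[Theorem~5.5]{PuRe2022} are met (nilpotency of $U_{\omega n,S}$, compatibility of the $\bC^*$- and $T$-actions with $\Lambda$ and $\mr{res}$, $T$-equivariance of $\pi_I$) and then invoke it---is exactly the paper's implicit approach, only spelled out more carefully.

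One caution: your paragraph arguing that $\pi_I$ restricts to a \emph{bijection} on $T$-fixed points is both unnecessary for the stated lemma and in general false. The lemma asserts only that $(\hat X_S^I)^T$ sits inside the union of fibers $\pi_I^{-1}(p)$ for $p\in (X_S^I)^T$ and that these fixed points yield a cellular BB-decomposition. Over a fixed point $p$ lying in a stratum strictly smaller than $\mathcal{S}_{U_I}$, the fiber $\pi_I^{-1}(p)$ is typically a positive-dimensional projective variety (a product of Grassmannians coming from the tower in Theorem~\ref{trm:tower-of-grassmann-bundles}) and carries several $T$-fixed points, so the map on fixed loci is only a surjection. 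Your argument that each fiber contains at least one fixed point is fine; finiteness of $(\hat X_S^I)^T$ then comes from the explicit combinatorial parametrization in \cite{PuRe2022}, not from any bijection with $(X_S^I)^T$. If you drop the bijection claim, the remainder of your plan is correct and matches the paper.
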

Recall (see \cite[Proposition~2.21]{FLP23})
that 
the automorphism group of $ \hat{U}_{\omega n}$ satisfies
\[  {\rm Aut}_{\hat{\Delta}_n}\big( \hat{U}_{\omega n}\big) \cong {\rm Aut}_{\Delta_n}\big(U_{\omega n}\big). \]

With the same argument we obtain the next result.
\begin{prop}\label{prop:aut-group-desing-partial-setting}
For $\omega\in \bN$, $k \in [n]$ and $S \subset [n]$ with $\#S=r$, the automorphism group of $ \hat{U}_{\omega n,S}$ satisfies
\[  {\rm Aut}_{\hat{\Delta}_r}\big( \hat{U}_{\omega n,S}\big) \cong {\rm Aut}_{\Delta_r}\big(U_{\omega n,S}\big). \]
\end{prop}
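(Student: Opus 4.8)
The plan is to mimic the proof strategy already used for the $S=[n]$ case (cited as \cite[Proposition~2.21]{FLP23}), which asserts ${\rm Aut}_{\hat{\Delta}_n}(\hat{U}_{\omega n}) \cong {\rm Aut}_{\Delta_n}(U_{\omega n})$, and adapt it verbatim to the parahoric setting. The key point is that the functor $\Lambda: \mr{rep}_\bC^{\omega r}(\Delta_r) \to \mr{rep}_\bC(\hat{\Delta}_r)$ and the restriction functor $\mr{res}: \mr{rep}_\bC(\hat{\Delta}_r) \to \mr{rep}_\bC(\Delta_r)$ from Section~\ref{sec:desingularization} satisfy $\mr{res} \circ \Lambda = \mr{id}$ on $\mr{rep}_\bC^{\omega r}(\Delta_r)$, and moreover $\Lambda$ is full and faithful when restricted to this subcategory. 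Since $U_{\omega n, S} \in \mr{rep}_\bC^{\omega r}(\Delta_r)$ (it is nilpotent: each composition of $\omega r$ consecutive maps $M_i$ is zero by the explicit matrix description in Equation~\eqref{eq:linear-maps} and the fact that $\sum q_i = n$, so shifting by $\omega n$ total indices kills $\bC^{\omega n}$), we have $\hat{U}_{\omega n, S} = \Lambda(U_{\omega n, S})$ and the functor $\Lambda$ induces a bijection
\[ {\rm Hom}_{\Delta_r}(U_{\omega n,S}, U_{\omega n,S}) \xrightarrow{\ \sim\ } {\rm Hom}_{\hat{\Delta}_r}(\hat{U}_{\omega n,S}, \hat{U}_{\omega n,S}) \]
which is moreover a $\bC$-algebra isomorphism (since $\Lambda$ is a functor, it respects composition). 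An algebra isomorphism restricts to an isomorphism of unit groups, giving the claim.

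Concretely, the steps I would carry out are: first, verify that $\hat{U}_{\omega n, S}$ as defined equals $\Lambda(U_{\omega n, S})$ — this is essentially unwinding the definition of $\Lambda$ against the explicit structure of $U_{\omega n,S}$, noting that $\hat{M}^{(i,j)}$ is the image of $M^{(i)}$ under the appropriate composition of maps and that the maps $\hat{M}_{b_{i,j}}$ are genuine inclusions. Second, show that $\Lambda$ restricted to $\mr{rep}_\bC^{\omega r}(\Delta_r)$ is fully faithful: faithfulness is immediate because $\mr{res} \circ \Lambda = \mr{id}$; fullness requires checking that any $\hat{\Delta}_r$-morphism $\hat{U}_{\omega n,S} \to \hat{U}_{\omega n,S}$ is determined by (and arises from) its component maps on the vertices $(i,1)$, which must then commute with the $\Delta_r$-structure maps — this uses that the maps $\hat{M}_{b_{i,j}}$ are injective, so a morphism's components on vertices $(i,j)$ with $j \geq 2$ are forced by compatibility along the $b$-arrows, and compatibility along the $a$-arrows then exactly recovers the $\Delta_r$-morphism condition. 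Third, conclude that the induced map on endomorphism algebras is a ring isomorphism and hence restricts to an isomorphism of automorphism (unit) groups.

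The main obstacle — though it is not severe — is the fullness check: one must be careful that a collection of vertex maps $(\psi_{(i,j)})$ satisfying the $\hat{\Delta}_r$-compatibility conditions really is rigidly determined by $(\psi_{(i,1)})_{i \in \bZ_r}$ and that these base components automatically satisfy the $\Delta_r$-morphism relation $\psi_{(i+1,1)} \circ (\mr{res}\,\hat{U})_i = (\mr{res}\,\hat{U})_i \circ \psi_{(i,1)}$. This follows from the injectivity of the $b$-maps (which propagates the data from $(i,1)$ up the $j$-direction uniquely) together with the defining identity $(\mr{res}\,\hat{U})_i = \hat{U}_{b_{i,2}} \circ \hat{U}_{a_{i,1}}$; composing the compatibility squares along $a_{i,1}$ and $b_{i,2}$ gives exactly the required relation. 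Since the $S=[n]$ argument in \cite{FLP23} already handles this, I would simply remark that the identical reasoning applies, the only change being that the blocks now have sizes governed by $q_i = s_{i+1}-s_i$ rather than all being $1 \times 1$, which does not affect the formal argument. Hence the proof is a one-line invocation: "With the same argument as in \cite[Proposition~2.21]{FLP23}, the functor $\Lambda$ restricts to a fully faithful functor on $\mr{rep}_\bC^{\omega r}(\Delta_r)$ sending $U_{\omega n,S}$ to $\hat{U}_{\omega n,S}$, whence the endomorphism algebras — and thus the automorphism groups — are isomorphic."
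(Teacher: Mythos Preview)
Your proposal is correct and follows essentially the same approach as the paper: both argue that an automorphism of $\hat{U}_{\omega n,S}$ is uniquely determined by its components $A^{(i,1)}$ at the base vertices $(i,1)$ (you phrase this as injectivity of the $b$-maps forcing the higher components; the paper says the remaining $A^{(i,j)}$ are read off from the lower blocks of $A^{(i,1)}$), and that the constraint imposed on these base components via $b_{i,2}\circ a_{i,1}$ is exactly the $\Delta_r$-automorphism condition. Your functorial packaging via full faithfulness of $\Lambda$ is a mild abstraction of the paper's explicit matrix argument, but the content is identical.
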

\begin{proof}
The relations on the matrices $A^{(i,1)}$ of $A \in {\rm Aut}_{\hat{\Delta}_r}( \hat{U}_{\omega n,S})$ from the compositions $b_{i+1,2} \circ a_{i,1}$ for all $i \in \mathbb{Z}_r$ are the same as for the matrices $B^{(i)}$ of $B \in {\rm Aut}_{\Delta_r}(U_{\omega n,S})$ (see Proposition~\ref{Aut}). All remaining components 
$A^{(i,r)}$ are obtained from the lower diagonal blocks of the matrices $A^{(i,1)}$ because of the explicit construction of the representation $\hat{U}_{\omega n,S}$. Hence all relevant information is already contained in the matrices $A^{(i,1)}$ which implies the desired isomorphism. 
\end{proof} 

\begin{prop}\label{lem:orbit-structures-desing}
The ${\rm Aut}_{\hat{\Delta}_r}\big( \hat{U}_{\omega n,S}\big) $-orbits of the $T$-fixed points in the quiver Grassmannian $\hat{X}^I_S(k,n,\omega)$ are their strata. The orbits of the $T$-fixed points under the induced ${\rm Aut}_{\hat{\Delta}_n}\big( \hat{U}_{\omega n}\big) $-action coincide with their $\bC^*$-attracting sets.
\end{prop}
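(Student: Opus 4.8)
The plan is to deduce this statement by combining the analogous result already available for the cyclic quiver (Theorem~\ref{trm:cells-are-strata}) with the identification of automorphism groups in Proposition~\ref{prop:aut-group-desing-partial-setting} and the cellular decomposition of the desingularization in Lemma~\ref{lem:cell-decomp-desing}. First I would recall that $\hat{U}_{\omega n,S}$ is the image of $U_{\omega n,S}$ under the functor $\Lambda$, and that by construction of $\hat{\Delta}_r$ its vector spaces are spanned by subsets of the bases of the vector spaces of $U_{\omega n,S}$; in particular $\hat{U}_{\omega n,S}$ is again nilpotent and, being built from the projective-injective $U_i(N)$'s, is an injective object of $\mr{rep}_\bC^{\omega r}\hat{\Delta}_r$ in the appropriate sense. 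Hence \cite[Lemma~2.28]{Pue2019} applies verbatim to $\hat{X}^I_S(k,n,\omega)\subseteq\prod \mr{Gr}(\hat{U}_{\omega n,S})$ and yields that the strata (isomorphism classes of subrepresentations) are exactly the ${\rm Aut}_{\hat{\Delta}_r}(\hat{U}_{\omega n,S})$-orbits. Since by Lemma~\ref{lem:cell-decomp-desing} every $T$-fixed point of $\hat{X}^I_S(k,n,\omega)$ is the preimage under $\pi_I$ of a $T$-fixed point of $X^I_S(k,n,\omega)$, and these are isolated, the first assertion follows.

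For the second assertion, the key observation is that the induced ${\rm Aut}_{\hat{\Delta}_n}(\hat{U}_{\omega n})$-action on $\hat{X}^I_S(k,n,\omega)$ is defined through the group homomorphism ${\rm pr}_{[n]\setminus S}:{\rm Aut}_{\hat{\Delta}_n}(\hat{U}_{\omega n})\to{\rm Aut}_{\hat{\Delta}_r}(\hat{U}_{\omega n,S})$, exactly parallel to the cyclic-quiver case, and that this homomorphism is surjective (this is the analogue of Theorem~\ref{thm:natural-projections} at the level of desingularizations, together with the isomorphisms ${\rm Aut}_{\hat{\Delta}_n}(\hat{U}_{\omega n})\cong{\rm Aut}_{\Delta_n}(U_{\omega n})$ and ${\rm Aut}_{\hat{\Delta}_r}(\hat{U}_{\omega n,S})\cong{\rm Aut}_{\Delta_r}(U_{\omega n,S})$ from Proposition~\ref{prop:aut-group-desing-partial-setting}). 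The projection $\pi_I$ is $T$-equivariant and sends cells to cells, and via the functorial compatibility of $\Lambda$ and $\mr{res}$ with ${\rm pr}_{[n]\setminus S}$ it intertwines the two Aut-actions. One then repeats the argument of Theorem~\ref{trm:cells-are-strata}: the ${\rm Aut}_{\hat{\Delta}_n}(\hat{U}_{\omega n})$-orbit of a $T$-fixed point $P_C$ is contained in the stratum and is constructible, $T$-stable, and irreducible; comparing with the cellular decomposition from Lemma~\ref{lem:cell-decomp-desing} and using that each cell contains a unique $T$-fixed point forces the orbit to coincide with the $\bC^*$-attracting set of $P_C$.

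I expect the main obstacle to be a bookkeeping one rather than a conceptual one: namely, verifying carefully that the functors $\Lambda$ and $\mr{res}$ are compatible with the projection functors ${\rm pr}_{[n]\setminus S}$ on both $\mr{rep}_\bC^{\omega n}\hat{\Delta}_n$ and $\mr{rep}_\bC^{\omega n}\Delta_n$, so that the square relating $\pi_I$, $\pi_{I'}$ and the two ${\rm pr}_{S\setminus S'}$'s in the proof of Theorem~\ref{thm:natural-projections-desing} also intertwines the Aut-group actions. This requires checking that the composition maps $U_{\alpha_{s_i,m+s_{i+1}-s_i-1}}\circ\dots\circ U_{\alpha_{s_i,m}}$ and $U_{\beta_{s_{i+1}-1,m-s_{i+1}+s_i-1}}\circ\dots\circ U_{\beta_{s_i,m}}$ used to define the induced functor on $\hat{\Delta}$-representations behave functorially under automorphisms, which is a direct but slightly tedious matrix computation entirely analogous to the proof of Proposition~\ref{prop:aut-group-desing-partial-setting}. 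Once this compatibility is in place, both assertions are formal consequences of the results already established for $X_S(k,n,\omega)$ and for $\hat{X}^I_S(k,n,\omega)$ in the maximal case $S=[n]$.
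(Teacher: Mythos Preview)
Your treatment of the first assertion is essentially the paper's: invoke injectivity of $\hat{U}_{\omega n,S}$ and \cite[Lemma~2.28]{Pue2019} to identify strata with ${\rm Aut}_{\hat{\Delta}_r}(\hat{U}_{\omega n,S})$-orbits.

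For the second assertion, however, there is a genuine error. You claim that the group homomorphism ${\rm pr}_{[n]\setminus S}:{\rm Aut}_{\hat{\Delta}_n}(\hat{U}_{\omega n})\to{\rm Aut}_{\hat{\Delta}_r}(\hat{U}_{\omega n,S})$ is surjective. It is not, and in fact it cannot be: if it were, the induced ${\rm Aut}_{\hat{\Delta}_n}(\hat{U}_{\omega n})$-orbits would coincide with the ${\rm Aut}_{\hat{\Delta}_r}(\hat{U}_{\omega n,S})$-orbits, i.e.\ with the strata, and the proposition would collapse to a single statement contradicting Remark~\ref{rem:more-fixed-points-than-strata}. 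A concrete counterexample is $n=2$, $\omega=1$, $S=\{1\}$: under the identifications of Proposition~\ref{prop:aut-group-desing-partial-setting}, the source group consists of pairs of lower-triangular $2\times 2$ matrices and the projection to the component over vertex $1$ lands in the lower-triangular subgroup of ${\rm Aut}_{\Delta_1}(U_{2,\{1\}})\cong GL_2$. Your fallback ``direct'' argument (orbit is irreducible, $T$-stable, contains a unique fixed point, hence equals the cell) is also incomplete: you have not explained why the orbit is $T$-stable (the torus $T$ is not contained in the image of the group homomorphism), nor why an irreducible $T$-stable locally closed set with one fixed point must be a single attracting cell rather than a more complicated union.

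The paper's argument is different and avoids both issues. It uses surjectivity of the projection ${\rm pr}_{[n]\setminus S}$ \emph{on the varieties} $\hat{X}^I_{[n]}(k,n,\omega)\to \hat{X}^{I'}_S(k,n,\omega)$ (Theorem~\ref{thm:natural-projections-desing}), together with the fact that this map commutes both with the $\bC^*$-action (hence sends attracting sets onto attracting sets) and with the ${\rm Aut}_{\hat{\Delta}_n}(\hat{U}_{\omega n})$-action (hence sends orbits onto induced orbits). Since in the source $S=[n]$ one already knows that attracting sets and ${\rm Aut}_{\hat{\Delta}_n}(\hat{U}_{\omega n})$-orbits coincide (\cite[Lemma~2.22]{FLP23}), the same identification is pushed forward to the target. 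The bookkeeping you anticipated (compatibility of $\Lambda$, $\mr{res}$, ${\rm pr}$) is indeed what underlies ``the action commutes with the projection'', but the crucial input is surjectivity of the map of spaces, not of the map of groups.
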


\begin{proof}
The representation $\hat{U}_{\omega n,S}$ is an injective bounded $\hat{\Delta}_r$-representation. Hence we can apply \cite[Lemma~2.28]{Pue2019}, to conclude that all strata are ${\rm Aut}_{\hat{\Delta}_r}\big( \hat{U}_{\omega n,S}\big)$-orbits and vice versa. 

By construction the induced ${\rm Aut}_{\hat{\Delta}_n}\big( \hat{U}_{\omega n}\big) $-action commutes with the projection $\mr{pr}_{S\setminus S'}$. In \cite[Lemma~2.22]{FLP23} it is shown that the ${\rm Aut}_{\hat{\Delta}_n}\big( \hat{U}_{\omega n}\big) $-orbits in $X(k,n,\omega)$ coincide with the cells in the decomposition into the attracting sets of $\bC^*$-fixed points. Now Theorem~\ref{thm:natural-projections-desing} implies that $\mr{pr}_{S\setminus S'}$ surjects these cells to the attracting sets of $\bC^*$-fixed points in $X_S(k,n,\omega)$. Hence these sets have to coincide with the orbits for the induced ${\rm Aut}_{\hat{\Delta}_n}\big( \hat{U}_{\omega n}\big) $-action.
\end{proof} 
There exist strata containing more than one $T$-fixed point if $\#S < n$.



\begin{thebibliography}{99}

\bibitem[BB73]{Birula1973} A.~Bialynicki-Birula, \emph{Some theorems on actions of algebraic groups}, Annals of Mathematics, Second Series, \textbf{98} (1973), no. 3, 480--497.



\bibitem[CR08]{CaRe2008} P.~Caldero and M.~Reineke, \emph{On the quiver Grassmannian in the acyclic case}, J. Pure Appl. Algebra, Volume~\textbf{212} (2008), no. 11, 2369--2380.


\bibitem[CI11]{Cerulli2011} G.~Cerulli Irelli, \emph{Quiver Grassmannians associated with string modules}, J. Algebraic Comb., \textbf{33} (2011), 259--276.

\bibitem[CI20]{CI20}G.~Cerulli Irelli, \emph{Three lectures on quiver Grassmannians} in “Representation Theory and Beyond", Cont. Math. 758, ed. Jan Št'ovíček, Jan Trlifaj
American Mathematical Soc., 2020, 57--88.

\bibitem[CEFR21]{CEFR2018} G.~Cerulli Irelli, F.~Esposito, H.~Franzen, M.~Reineke, \emph{Cellular decomposition and algebraicity of cohomology for quiver Grassmannians}, Advances in Mathematics,
Volume~\textbf{379} (2021), Paper No. 107544.

\bibitem[CFR12]{CFR12} G.~Cerulli Irelli, E.~Feigin, M.~Reineke, \emph{Quiver Grassmannians and degenerate flag varieties}, Algebra Number Theory, \textbf{6} (2012), no. 1, 165--194.

\bibitem[CFR13]{CFR13} G.~Cerulli Irelli, E.~Feigin, M.~Reineke, \emph{Desingularisation  of quiver Grassmannians for Dynkin quivers}, Adv. Math. \textbf{245} (2013) 182--207.


\bibitem[CFFFR17]{CFFFR17} G.~Cerulli Irelli, X.~Fang, E.~Feigin, G.~Fourier, M.~Reineke, \emph{Linear degenerations of flag varieties}, Mathematische Zeitschrift, \textbf{287} (2017), no. 
1,  615--654.

\bibitem[CFFFR20]{CFFFR20} G.~Cerulli Irelli, X.~Fang, E.~Feigin, G.~Fourier, M.~Reineke, \emph{
Linear degenerations of flag varieties: partial flags, defining equations, and group actions}, Math. Z. 296 (2020), no. 1--2, 453--477.




\bibitem[CL15]{CL15} 
G.~Cerulli Irelli, M.~Lanini, \emph{ Degenerate flag varieties of type A and C are Schubert varieties}, \emph{Int. Math. Res. Not.} (2015), no.~15, 6353--6374. 




\bibitem[FF13]{FF13} 
E. Feigin, M. Finkelberg, \emph{Degenerate flag varieties of type A: Frobenius splitting and BW theorem}, Math. Z.,\textbf{275} (2013), no. 1--2, 55--77.

\bibitem[FFR17]{FFR17} 
E. Feigin, M. Finkelberg, M. Reineke, \emph{Degenerate affine Grassmannians and loop quivers}, Kyoto J. Math., Volume~\textbf{57} (2017), no. 2, 445--474.

\bibitem[FL06]{FL06} G.~Fourier, P.~Littelmann,
\emph{Tensor product structure of affine Demazure modules and limit constructions,}  Nagoya Math. J. 182 (2006), 171--198.


\bibitem[FLP22]{FLP21}
E. Feigin, M. Lanini, A. P\"utz, \emph{Totally nonnegative Grassmannians, Grassmann necklaces and quiver Grassmannians}, Canadian Journal of Mathematics (2022). \url{https://doi.org/10.4153/S0008414X22000232}

\bibitem[FLP23]{FLP23} E.~Feigin,  M.~Lanini, A.~P\"utz, \emph{Generalized juggling patterns, quiver Grassmannians and affine flag varieties},  \url{http://arxiv.org/abs/2302.00304}. 

\bibitem[GKM98]{GKM1998} M.~Goresky, R.~Kottwitz, R.~MacPherson, \emph{Equivariant cohomology, Koszul duality, and the localization theorem}, Invent. Math., \textbf{131} (1998), 25--83.

\bibitem[Ga01]{Ga01}
D. Gaitsgory, \emph{Construction of central elements in the affine Hecke algebra via nearby cycles}, Inventiones mathematicae (2001), Volume 144, Issue 2, pp 253--280.

\bibitem[G01]{G01}
U.~G\"ortz, \emph{On the flatness of models of certain Shimura varieties of PEL-type}, Math. Ann. 321 (2001), 689--727.

\bibitem[G03]{G03}
U.~G\"ortz, \emph{On the Flatness of local models for the symplectic group}, Advances in Mathematics 176 (2003), no. 1, 89--115.



\bibitem[H13]{H13}
X.~He, \emph{Normality and Cohen–Macaulayness of local models of Shimura Varieties}, Duke Mathematical Journal 162 (2013), no. 13, 2509--2523.

\bibitem[HN02]{HN02}
T.~Haines, B.C.~Ng\^o, \emph{Nearby cycles for local models of some Shimura varieties}, Compositio Math. 133 (2002), no. 2, 117--150.

\bibitem[HR20]{HR20}
T.~Haines, T.~Richarz, \emph{Smoothness of Schubert varieties in twisted affine Grassmannians}, Duke Math. J. 169 (2020), no. 17, 3223–3260.



\bibitem[HR23]{HR23}T.~Haines, T.~Richarz, \emph{Normality and Cohen-Macaulayness of parahoric local models}, J. Eur. Math. Soc. (JEMS) 25 (2023), no. 2, 703--729.


\bibitem[HZ23-1]{HZ23-1}
X.~He, N.~Zhang,  \emph{Degenerations of Grassmannians via lattice configurations}, International Mathematics Research Notices 2023 (2023), no. 1, 298--349.

\bibitem[HZ23-2]{HZ23-2}
X.~He, N.~Zhang,  \emph{Degenerations of Grassmannians via lattice configurations II,
} arXiv:2305.00158. 


\bibitem[Kac85]{Kac85}
V.~Kac, \emph{Infinite dimensional Lie algebras}, Cambridge Univ. Press, Cambridge, 1985.




\bibitem[KLS13]{KLS13} A.~Knutson, T.~Lam, D.E.~Speyer, \emph{Positroid varieties:  juggling and geometry}, Compos. Math., 149(10):1710--1752, 2013.


\bibitem[KS14]{KS14} B.~Keller, S.~Scherotzke, \emph{Desingularisations of quiver Grassmannians 
via graded quiver varieties}, Advances in Mathematics 256 (2014) 318--347.

\bibitem[Kum02]{Kum02}
S. Kumar, Kac–Moody Groups, Their Flag Varieties and Representation Theory. Progress in Mathematics, vol. 204 (Birkh\"auser, Boston, 2002)

\bibitem[Lam16]{Lam16}
T.~Lam, \emph{Totally nonnegative Grassmannian and Grassmann polytopes}, 
Current developments in mathematics 2014, 51–152, Int. Press, Somerville, MA, 2016.

\bibitem[LP20]{LaPu2020} M.~Lanini, A.~P\"utz, \emph{GKM-Theory for Torus Actions on Cyclic Quiver Grassmannians},  \url{http://arxiv.org/abs/2008.13138}, \emph{to appear in} Algebra Number Theory.

\bibitem[LP23]{LaPu2021}M.~Lanini, A.~P\"utz, \emph{Permutation actions on quiver Grassmannians for the equioriented cycle via GKM-theory}, J. Algebr. Comb. \textbf{57}, 915--956 (2023).


\bibitem[Lus98a]{Lus98a}
G.~Lusztig, \emph{Total positivity in partial flag manifolds}, Representation Theory 2 (1998) 70--78.



\bibitem[P18]{P18}
G.~Pappas, \emph{Arithmetic models for Shimura varieties}, Proceedings of the ICM -- Rio 2018. Vol. II.
Invited lectures, 377--398, World Sci. Publ., Hackensack, NJ, 2018.

\bibitem[PRS13]{PRS13}
G.~Pappas, M.~Rapoport, B.~Smithling, \emph{Local models of Shimura varieties},
I. Geometry and combinatorics, Handbook of moduli. Vol. III, Adv. Lect. Math.
(ALM), vol. 26, Int. Press, Somerville, MA, 2013, pp. 135--217.

\bibitem[PZ13]{PZ13}
 G.~Pappas, X.~Zhu, \emph{Local models of Shimura varieties and a conjecture of Kottwitz}, Invent. Math. 194 (2013), no. 1, 147--254.


\bibitem[PZ22]{PZ22}
 G.~Pappas, I.~Zachos, \emph{Regular integral models for Shimura varieties of orthogonal type}, Compos. Math. 158 (2022), no. 4, 831--867.

\bibitem[Pos06]{Pos06} A.~Postnikov, \emph{Total  positivity,   Grassmannians,   and  networks},  Preprint, \url{http://math.mit.edu/~apost/papers/tpgrass.pdf}, 2006.

\bibitem[Pue19]{Pue2019} A.~P\"utz, \emph{Degenerate Affine Flag Varieties and Quiver Grassmannians}, Dissertation, Ruhr-Universit\"at Bochum (2019). \url{https://doi.org/10.13154/294-6576} 

\bibitem[Pue22]{Pue2020} A.~P\"utz, \emph{Degenerate Affine Flag Varieties and Quiver Grassmannians}, Algebr. Represent Theor, \textbf{25} (2022), 91--119.

\bibitem[PR23]{PuRe2022} A.~P\"utz, M.~Reineke, \emph{Desingularizations of Quiver Grassmannians for the Equioriented Cycle Quiver}, \url{http://arxiv.org/abs/2302.05384}.


\bibitem[Re13]{Re13} M.~Reineke, \emph{Every projective variety is a quiver Grassmannian}, Algebras and Representation Theory, \textbf{ 16} (2013), 1313--1314.


\bibitem[Rie99]{Rie99}
K.~Rietsch, \emph{An algebraic cell decomposition of the nonnegative part of a flag variety},  
J. Algebra 213 (1999), no. 1, 144--154.





\bibitem[S17]{S17} S.~Scherotzke, \emph{Desingularisation of quiver Grassmannians via Nakajima
categories}, Algebras and Representation Theory 20, 231--243 (2017).

\bibitem[Sch14]{Schiffler2014} R.~Schiffler, \emph{Quiver Representations}, CMS Books in Mathematics, Springer-Verlag, Cham, Switzerland, 2014. 


\bibitem[Scho92]{Scho92}A.~Schofield, \emph{General representations of quivers}, Proc. London Math. Soc. (3) 65 (1992), no. 1, 46--64.

\bibitem[W05]{W05}
L.~Williams, \emph{Enumeration of totally positive Grassmann cells}, 
Adv. Math. 190 (2005), no. 2, 319--342. 

\bibitem[Zho19]{Zho19}
Q.~Zhou, \emph{Convex polytopes for the central degeneration of the affine Grassmannian}, Adv. Math. 348 (2019), 541--582.


\bibitem[Zhu17]{Zhu17}
X.~Zhu, \emph{An introduction to affine Grassmannians and the geometric Satake equivalence}, Geometry of
moduli spaces and representation theory, 59--154, IAS/Park City Math. Ser., 24, Amer. Math. Soc., Providence, RI, 2017.


\bibitem[Zhu19]{Zhu19}
X.~Zhu, \emph{On the coherence conjecture of Pappas and Rapoport}, Ann. of Math. (2) 180 (2014), no. 1, 1--85.


\end{thebibliography}
\end{document}